\title{Radical-injectivy in the category \textbf{S-Act}}
\author[M. Haddadi, S.M. N.Sheykholislami]{M. Haddadi, S.M. N.Sheykholislami}
\newcommand{\Con}{{\mathtt{{Con}}}}
\newcommand{\Dom}{{\mathtt{{Dom}}}}
\address{}
\email{}
\address{$^{3}$ Faculty of Mathematics and Computer Sciences, Semnan University, Semnan, Iran.}
\email{m.haddadi@semnan.ac.ir, }
\date{}
\begin{document}
\newtheorem{theorem}{Theorem}[section]
\newtheorem{lemma}[theorem]{Lemma}
\newtheorem{proposition}[theorem]{Proposition}
\newtheorem{corollary}[theorem]{Corollary}
\theoremstyle{definition}
\newtheorem{definition}[theorem]{Definition}
\newtheorem{example}[theorem]{Example}
\newtheorem{exercise}[theorem]{Exercise}
\newtheorem{remark}[theorem]{Remark}
\newtheorem{note}[theorem]{Note}

\newtheorem{My Guess}[theorem]{My Guess}

\maketitle


\begin{abstract}
Various generalizations of the concept of injectivy, in particular injectivy with respect to a specific class of morphisms,  have been intensively studied throughout the years in different categories. One of the important kinds of injectivy  studied in the category  {\bf R-Mod} of $R$-modules is  $\tau$-injectivy, for a torsion theory $\tau$, or in the other words $r$-injectivy, where  $r$ is the induced idempotent radical by $\tau$.

 In this paper,  we introduce the notion of $r$-injectivy, for a Hoehnke radical $r$  in the category {\bf S-Act} of $S$-acts and we study the main properties of this kind of injectivy. Indeed,  we show that this kind of injectivy is well behavior and also we  present a Bear Theorem for $r$-injective $S$-acts. We then consider $r$-injectivy for a Kurosh-Amitsur radical $r$ and  we give stronger results in this case. Finally we present conditions under which $r$-injective $S$-acts are exactly injective ones and we give a characterization for injective $S$-acts.
\\

\subjclass[2010]{ 20M30, 17A65 , 08B30.}\\
\keywords {\bf Key words}: Radical, S-act, Injectivy, $r$-injectivy.

\end{abstract}

\section{Introduction and Priminaries}

Injectivy and its various generalizations,  important and interesting for their own and also tightly related to certain concepts such as purity and etc, have been intensively studied
throughout the years in different categories \cite{ban, Ebrahimi (2010), Ebrahimi (2009), Ebrahimi (2014), Maranda, mcmorris, Tholen(2008)}. One of the important  kinds of injectivy for module theorists is  $\tau$-injectivy, for a torsion theory $\tau$, or in the other words $r$-injectivy in which  $r$ is the induced idempotent radical by $\tau$, \cite{Bland, Crivei (2004), Dickson, Lambek, Nishida}.

 In this paper first, with every Hoehnke radical $r$ we associate  a closure operator $c^r$ and consider the class of all $c^r$-dense monomorphisms so-called $r$-monomorphisms. We then, in Section 3, study the properties of the class of $r$-monomorphisms. In Sections 4 and 5 we consider the injective $S$-acts relative to $r$-monomorphisms, $r$-injective $S$-acts, and we study the main properties of this kind of injectivy and we establish the well behavior theorems for $r$-injectivy. We then give Bear-Skornjakov criterion for $r$-injective $S$-acts and weakly injective $S$-acts in Section \ref{6}. Then, in section \ref{7}, we investigate $r$-injectivy when $r$ is a Kurosh-Amitsur radical and  we get stronger results in this case. Finally, the relationship between $r$-injectivy and usual injectivy is analyzed. Indeed, we present conditions under which $r$-injective $S$-acts are exactly injective ones and we give a characterization for the usual injective $S$-acts.



 Now Let us recall some necessary notions. An $S$-act over a monoid $S$ is a set $A$ together with an action $(s,a)\mapsto as$, for $a\in A$, $s\in S$, subject to the rules
$t(sa)=(ts)a$ and $1a=a$, where $1$ is the identity element of the monoid $S$, for  all $a\in A$ and $s,t\in S$.
A \textit{ homomorphism of $S$-acts} is a map $f:A\rightarrow B$ subject to $f(sa)=sf(a)$, for all $a\in A$ and $s\in S$.
We will work in the category of all $S$-acts and homomorphisms between them.
 An $S$-act $A$ is said to be  trivial, if $|A|\leq 1$.

\medskip
An equivalence relation $\rho$ on an $S$-act  $A$ is called a \textit{congruence} on $A$, if  $a\rho a'$ implies $ (sa)\rho (sa')$, for all $s\in S$. We denote the set  of all congruences on $A$ by $\Con(A)$ which forms a bounded lattice in which the diagonal relation $\Delta_{A}=\{ (a,a)\ | \ a\in A\}$ is the smallest element and the total relation $\nabla_{A}=\{ (a,b)\ | \ a,b \in A\}$ is the grates
one.
 Every congruence $\chi \in \Con(A)$ determines a partition  of $A$ into $\chi$-classes and a system $\Sigma_{\chi}$ of those $\chi$-classes each of which is a non-trivial subact of $A$. Of course, $\Sigma_{\chi} $ may be empty.  Throughout this paper  we use the general notion of Rees congruence, as well as \cite{Wiegandt (2006)} instead of the usual Rees congruence defined in \cite{Kilp (2000)}, meaning that a congruence $\rho$ is a \textit{Rees congruence } if the $\rho$-cosets either are subacts or consists of one element. So every system $\Sigma$ of disjoint non-trivial subacts  of an $S$-act $A$ determines a Rees congruence  $\rho_{\Sigma}$ given by
\[(a,b)\in \rho_{\Sigma} \Longleftrightarrow \left\{\begin{matrix}
a,b\in B\ & \text{for some } B\in \Sigma \\
a=b\  \  \ & \text{otherwise.\   \ \  \  \  \  \  }
\end{matrix}
\right. \]
 We call $\rho_{\Sigma}$ to be the \textit{generated Rees congruence by $\Sigma$} on  $A$ and  $A/\rho_\Sigma$ a {\em Rees factor} of $A$ over $\rho_{\Sigma}$. Also we use the notion $\rho_{B}$ instead  of $\rho_{\Sigma}$ when $\Sigma$ is the singleton set $\{B\}$ and denote the Rees factor of over $\rho_{B}$ by $A/B$ instead of $A/\rho_{\Sigma}$.

A congruence $\chi_B$ of a subact $B$ of an $S$-act $A$ may extend to a congruence of the $S$-act
$A$. There is always the smallest extension $\chi_A$ given by
\[
(a, b)\in\chi_A\Longleftrightarrow\begin{cases}
(a, b) \in\chi_B\\
a = b \ \text{otherwise}.
\end{cases}
\]
Therefore we may consider each congruence $\chi_B\in\Con(B)$ as a congruence of
$\Con(A)$ by identifying $\chi_B$ and $\chi_A$. In particular,  $\nabla_B$ can be considered as the generated Rees
congruence  by $B$, $\rho_B \in { \Con}(A)$ .

\medskip
Now we give some different types of radicals in  $S$-Act which is usually considered.

\medskip
$\bullet $ An assignment $r:A\rightsquigarrow r(A)$ assigning  each $S$-act $A$ to a congruence $r(A)\in \Con(A)$ is called a \textit{Hoehnke radical} or simply a \textit{radical} whenever,

 \sloppypar \rm{(i)} every homomorphism $f:A\rightarrow B$ induces the a homomorphism
$r(f):r(A)\rightarrow r(B)$; meaning that $(f(a),f(a'))\in r(B)$ if
$(a,a')~\in ~r(A)$, for every homomorphism $f:A\rightarrow B$.

(ii)\ ~~~ $r(A/r(A))=\Delta_{A/r(A)}$.

\medskip
 $\bullet $ A  radical $r$ is said to be \textit{hereditary}, if $r(B)=r(A)\wedge \Delta_{B}$, for all $B\leq A$ and all $S$-acts $A$.

\medskip
 $\bullet $ A  radical $r$ of $S$-acts is called a \textit{Kurosh-Amitsur radical}, if

\rm{(i)} r(A) is a Rees congruence, for all $S$-acts $A$,

\rm{(ii)} for every $B\in \Sigma_{r(A)}$,  $r(B)=\nabla_B$.

 \medskip
With every  radical $r$ one can associate two classes of $S$-acts, namely \textit{radical class} (or \textit{torsion class}) $\mathbb{R}_{r}= \{ A \ |\ r(A)=\nabla_{A}\}$ and \textit{semisimple class} (or \textit{torsion-free class})  $\mathbb{S}_{r}=\{  A\ |\ r(A)=\Delta_{A}\}$. We call the members of $\mathbb{R}_{r}$ to be the {\em radical $S$-acts} and the members of $\mathbb{S}_{r}$ to be the {\em semisimple $S$-acts}.
It is worth  noting that $\mathbb{S}_{r}$ is closed under taking subacts, product, isomorphic copies and contains all trivial $S$-acts. Also every subclass $\mathbb{S}$ of $S$-acts which is closed under taking subacts, product, isomorphic copies and contains all trivial $S$-acts, determines a  radical $r_{\mathbb{S}}$ defined by
$ r_{\mathbb{S}}(A)=\wedge (\chi\in \Con(A)\ | \ A/\chi\in \mathbb{S})$.
Moreover, $\mathbb{S}=\mathbb{S}_{r}$ if and only if $r=r_{\mathbb{S}}$, see \cite{Wiegandt (2006)}.

We recall, from \cite{Wiegandt (2006), Haddadi}, that a subclass
$\mathbb{S}$ of $S$-acts is a semisimple class of a radical $r$
if and only if
\begin{enumerate}
\item $\mathbb{S}$ contains of all trivial $S$-acts,
\item $\mathbb{S}$ is closed under isomorphic copies,
\item  $\mathbb{S}$ is closed under taking subacts,
\item  $\mathbb{S}$ is closed under  products,
\item  $\mathbb{S}$ is closed under  congruence extensions. That is, $A/\chi \in\mathbb{S}$ and $\Sigma_{\chi}\subseteq \mathbb{S}$ imply $A\in \mathbb{S}$, for every $A\in$\textbf{S-Act} and every congruence $\chi$ on $A$.
\end{enumerate}

Also a subclass $\mathbb{R}$ of $S$-acts is a radical class of
a radical $r$ if and only if
\begin{enumerate}
\item $\mathbb{R}$ contains all trivial $S$-acts,
\item  $\mathbb{R}$ is homomorphically closed,
\item  $\mathbb{R}$ has the inductive property; that is $\bigcup_{i\in I} A_{i}\in \mathbb{R}$  , for every ascending chain $\{A_{i}\}_{i\in I}\leq \mathbb{R}$,
\item  $\mathbb{R}$ is closed under Rees extensions. That is $A/\rho \in\mathbb{R}$ and $\Sigma_{\rho}\subseteq \mathbb{R}$ imply $A\in \mathbb{R}$, for every $A\in$\textbf{S-Act} and every congruence $\rho$ on $A$.
\end{enumerate}

It is worth noting the following remark concerning  $ \Sigma_{r(A)}$, for every $S$-act $A$, where $r$ is a  radical.

\begin{remark}\label{•}
{\rm (i) } For every subact $B$ of $A$ with $B\in \mathbb{R}_r$, there exists $X\in \Sigma_{r(A)}$ such that $B\leq X$.

{\rm (ii) } Each $r(A)$-class $X$ containing a subact $B$ of $A$ is itself  a subact of $A$, and so  $X\in \Sigma_{r(A)}$.
\end{remark}

Now we recall the following lemma from \cite{Ebrahimi} which is used in the sequel.

\begin{lemma}\label{maranda}
Let $r$ be a   radical and  $\chi\subseteq r(A)$ be a congruence on an $S$-act $A$. Then $r(A/\chi)=r(A)/\chi$.
\end{lemma}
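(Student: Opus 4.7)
The plan is to prove the two inclusions $r(A)/\chi \subseteq r(A/\chi)$ and $r(A/\chi) \subseteq r(A)/\chi$ separately, in both cases by invoking the two axioms of a Hoehnke radical applied to appropriate canonical projections. Here $r(A)/\chi$ denotes the congruence on $A/\chi$ determined by the quotient of $r(A)$ by $\chi$, which is well-defined precisely because $\chi \subseteq r(A)$; explicitly, $([a]_\chi,[b]_\chi)\in r(A)/\chi$ iff $(a,b)\in r(A)$.

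For the first inclusion, I would consider the canonical projection $\pi_\chi : A \to A/\chi$. By axiom (i) of a Hoehnke radical, $\pi_\chi$ sends $r(A)$-related elements to $r(A/\chi)$-related ones: if $(a,b)\in r(A)$ then $([a]_\chi,[b]_\chi)\in r(A/\chi)$. Unwinding the definition of $r(A)/\chi$ above, this is exactly the inclusion $r(A)/\chi\subseteq r(A/\chi)$.

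For the reverse inclusion, the key observation is that the hypothesis $\chi \subseteq r(A)$ makes the projection $\pi : A\to A/r(A)$ factor as $\pi = p\circ \pi_\chi$, where $p : A/\chi \to A/r(A)$ is the well-defined $S$-homomorphism $[a]_\chi\mapsto [a]_{r(A)}$. Applying axiom (i) to $p$, any pair $([a]_\chi,[b]_\chi)\in r(A/\chi)$ is sent to a pair $([a]_{r(A)},[b]_{r(A)})\in r(A/r(A))$. By axiom (ii) we have $r(A/r(A))=\Delta_{A/r(A)}$, so $[a]_{r(A)} = [b]_{r(A)}$, i.e.\ $(a,b)\in r(A)$. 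Translating back, $([a]_\chi,[b]_\chi)\in r(A)/\chi$, giving $r(A/\chi)\subseteq r(A)/\chi$.

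There is really no substantive obstacle here — the proof is essentially an exercise in combining functoriality (i) with the idempotency-type condition (ii); the one point requiring care is the bookkeeping that $r(A)/\chi$ is genuinely a congruence on $A/\chi$ and that it coincides with the kernel congruence of the induced map $p: A/\chi\to A/r(A)$, both of which follow directly from $\chi \subseteq r(A)$.
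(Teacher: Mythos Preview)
Your proof is correct. The paper itself does not prove this lemma; it merely recalls it from the reference \cite{Ebrahimi} (``A radical extension of the category of $S$-sets''), so there is no in-paper argument to compare against. The argument you give --- applying axiom (i) to the projection $A\to A/\chi$ for one inclusion, and to the induced map $A/\chi\to A/r(A)$ together with axiom (ii) for the other --- is exactly the standard and expected proof, and is presumably what appears in the cited source.
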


In particular for a Kurosh-Amitsur  radical $r$ and a set $\Sigma$ of disjoint subacts of an $S$-act $A$ with $\Sigma\leq \Sigma_{r(A)}$, we have $r(A/\rho_{\Sigma})=r(A)/\rho_{\Sigma}$.

\medskip
Also we recall,  given a subclass of monomorphisms $\mathcal{M}$, an $\mathcal{M}$-morphism $m$ is called to be $\mathcal{M}$-essential if for every homomorphism $f:B\rightarrow C$, $fm\in \mathcal{M}$ implies  $f\in \mathcal{M}$.
In this paper we use the terminology of  B. Banaschewski \cite{ban, Ebrahimi (2009), Ebrahimi (2014)} and we say that injectivy relative to a class $\mathcal{M}$  is well behaviour  in the category $\textbf{S\text{-}Act}$ if the following propositions are stablished.

\begin{proposition}[First well behaviour Theorem \cite{ban}]
 The following conditions are equivalent, for an $S$-act $A$:\\
\emph{(i)} $A$ is  $\mathcal{M}$-injective.\\
\emph{(ii)} $A$ is an $\mathcal{M}$-absolute retract.\\
\emph{(iii)} $A$ has no proper $\mathcal{M}$-essential extension.
\end{proposition}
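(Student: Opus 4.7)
The plan is to prove the cycle (i) $\Rightarrow$ (ii) $\Rightarrow$ (iii) $\Rightarrow$ (ii) $\Rightarrow$ (i), with (ii) serving as the hub. Both (i) $\Rightarrow$ (ii) and (ii) $\Rightarrow$ (iii) are essentially definitional. For the first, extend $1_A$ along any $\mathcal{M}$-morphism $m\colon A\to B$ to produce a retraction. For the second, if $m\colon A\to B$ is $\mathcal{M}$-essential and $r\colon B\to A$ is the retraction furnished by (ii), then $1_A=rm\in\mathcal{M}$ and the essentiality of $m$ forces $r\in\mathcal{M}$, whence $r$ is monic; a monic split epimorphism is an isomorphism, so $m$ is too, and $A$ admits no proper $\mathcal{M}$-essential extension.

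The implication (ii) $\Rightarrow$ (i) is a pushout argument. Given an $\mathcal{M}$-morphism $m\colon X\to Y$ and an arbitrary $f\colon X\to A$, form the pushout of $m$ along $f$ to obtain $\tilde m\colon A\to P$ and $\tilde f\colon Y\to P$. Since the class $\mathcal{M}$ of $r$-monomorphisms is pushout-stable in $\mathbf{S\text{-}Act}$, $\tilde m\in\mathcal{M}$. By (ii) pick a retraction $\rho\colon P\to A$ with $\rho\tilde m=1_A$; then $\rho\tilde f$ extends $f$ along $m$, because $(\rho\tilde f)m=\rho\tilde m f=f$.

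The heart of the proof is (iii) $\Rightarrow$ (ii), which I would establish by Zorn. Given $m\colon A\to B$ in $\mathcal{M}$, consider the poset
\[
\Theta=\bigl\{\theta\in\Con(B)\mid \pi_\theta\circ m\in\mathcal{M}\bigr\},
\]
where $\pi_\theta\colon B\to B/\theta$ is the canonical projection. This set contains $\Delta_B$ and is closed under directed unions of congruences, so Zorn supplies a maximal element $\theta^*$. I would then show that the induced $\mathcal{M}$-morphism $\bar m\colon A\to B/\theta^*$ is $\mathcal{M}$-essential: given any homomorphism $h\colon B/\theta^*\to D$ with $h\bar m\in\mathcal{M}$, factor $h$ through its image, lift the resulting quotient part to a congruence on $B$ containing $\theta^*$, and use the maximality of $\theta^*$ to force that lifted congruence to equal $\theta^*$; consequently the quotient part of $h$ is an isomorphism and $h$ itself lies in $\mathcal{M}$. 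Applying (iii) to the $\mathcal{M}$-essential extension $\bar m$ forces it to be an isomorphism, and composing $\pi_{\theta^*}$ with $\bar m^{-1}$ delivers the required retraction $B\to A$.

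The main obstacle is the $\mathcal{M}$-essentiality verification inside the Zorn step: it silently relies on several structural properties of the class of $r$-monomorphisms---namely, stability under pushouts and directed unions of congruences, compatibility with the epi-mono factorisation in $\mathbf{S\text{-}Act}$, and the containment of all identities---each of which the paper will need to establish when the class is first analysed in Section~3. Once these closure properties are in hand, the four implications above close up without further difficulty.
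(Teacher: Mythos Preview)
The paper does not prove this proposition at all: it is quoted from Banaschewski \cite{ban} in the preliminaries as part of the \emph{definition} of what it means for $\mathcal{M}$-injectivity to be ``well behaved''. The paper's strategy is not to prove the First (or Second, or Third) Well Behaviour Theorem directly, but rather to verify, for the specific class $\mathcal{M}$ of $r$-monomorphisms, the list of sufficient conditions $B_1$--$B_6$ that Banaschewski isolated; this is carried out in Sections~3 and~5 (composition closure, Banaschewski's $r$-condition Theorem~\ref{b3}, $r$-transferability Lemma~\ref{b4}, the $r$-direct limit condition Theorem~\ref{b5}, etc.), and the well-behaviour theorems then follow by invoking \cite{ban}.

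Your sketch is essentially a reconstruction of Banaschewski's original argument, and the ``obstacles'' you flag at the end---pushout stability, closure under directed unions of congruences, compatibility with epi--mono factorisation---are precisely the abstract conditions $B_1$--$B_6$. So you and the paper are doing the same work, just packaged differently: the paper separates the verification of the structural hypotheses from the general deduction (delegating the latter to \cite{ban}), whereas you fold both into a single direct argument. Note in particular that your (ii)$\Rightarrow$(i) pushout step is the paper's Lemma~\ref{b4}, and the Zorn argument underlying your (iii)$\Rightarrow$(ii) is the content of Theorem~\ref{b3}. There is nothing wrong with your approach, but since the statement as it appears in the paper is a quoted background result rather than a theorem the authors set out to prove, there is no ``paper's own proof'' to compare against beyond this indirect route through $B_1$--$B_6$.
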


\begin{proposition}[Second well behaviour Theorem \cite{ban}]
Every $S$-act  $A$ has an $\mathcal{M}$-injective hull.
\end{proposition}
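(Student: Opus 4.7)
The plan is to adopt Banaschewski's two-step scheme: first embed $A$ via an $\mathcal{M}$-morphism into some $\mathcal{M}$-injective $S$-act $E$, then inside $E$ extract a maximal $\mathcal{M}$-essential extension of $A$ and show that it is itself $\mathcal{M}$-injective, hence an $\mathcal{M}$-injective hull of $A$.

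Concretely, I would proceed as follows. Step (i): establish that there are enough $\mathcal{M}$-injectives, i.e.\ that every $S$-act $A$ admits an $\mathcal{M}$-morphism $A\to E$ with $E$ being $\mathcal{M}$-injective. For the class of $r$-monomorphisms a natural approach is a Maranda-type transfinite construction: iteratively solve all outstanding extension problems along $\mathcal{M}$-morphisms starting from $A$, taking colimits at limit ordinals, and stop at an ordinal chosen by a suitable cardinality bound; one must then verify that these colimits preserve $c^r$-density of the evolving embedding and that the limit object is $\mathcal{M}$-injective. Step (ii): apply Zorn's lemma to the poset of sub-$S$-acts $H\le E$ containing $A$ for which $A\hookrightarrow H$ is $\mathcal{M}$-essential. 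Upper bounds of chains exist because directed unions of $\mathcal{M}$-essential extensions are again $\mathcal{M}$-essential, a fact that should follow from the good behaviour of $c^r$ on ascending chains (a technical lemma to be pinned down in Section 3). Let $H$ be a maximal such element. Step (iii): show that $H$ is $\mathcal{M}$-injective via the First Well Behaviour Theorem, i.e.\ that $H$ admits no proper $\mathcal{M}$-essential extension. Given such an $m\colon H\to H'$, $\mathcal{M}$-injectivity of $E$ yields $h\colon H'\to E$ extending the inclusion $H\hookrightarrow E$; since $hm$ is that inclusion and lies in $\mathcal{M}$, $\mathcal{M}$-essentialness of $m$ forces $h\in\mathcal{M}$, hence monic. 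Transitivity of $\mathcal{M}$-essentialness then gives that $A\hookrightarrow h(H')$ is $\mathcal{M}$-essential inside $E$, and maximality of $H$ forces $h(H')=H$, so $m$ is an isomorphism. Thus $H$ is an $\mathcal{M}$-injective hull of $A$.

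The main obstacle is step (i): producing an $\mathcal{M}$-injective extension of $A$ whose structural embedding is itself an $r$-monomorphism. The classical injective cogenerators in \textbf{S-Act} do provide injective extensions in the ordinary sense, but the associated embeddings are not automatically $c^r$-dense, so one must either refine the cogenerator construction using the semisimple class $\mathbb{S}_r$ or replace it by a transfinite small-object-style argument adapted to $\mathcal{M}$. A secondary technical point is the stability of $\mathcal{M}$-essentialness under directed unions required in step (ii), which ultimately reduces to compatibility between the closure operator $c^r$ and ascending chains of subacts.
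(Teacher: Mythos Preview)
This proposition is not proved in the paper at all: it is quoted from Banaschewski as part of the \emph{definition} of what it means for $\mathcal{M}$-injectivity to be well-behaved. The paper's actual work is to verify, for the specific class $\mathcal{M}$ of $r$-monomorphisms, the hypotheses $B_1$--$B_6$ of Banaschewski's framework (closure under composition, left regularity, Banaschewski's $r$-condition Theorem~\ref{b3}, $r$-transferability Lemma~\ref{b4}, the $r$-direct-limit condition Theorem~\ref{b5}, and $r^*$-cowell-poweredness), and then to invoke Banaschewski's general result. For Kurosh--Amitsur radicals the paper goes further and gives the explicit formula $E_r(A)=c^r_{E(A)}(A)$ (Proposition~\ref{k. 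E_r}).

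Your outline follows Banaschewski's scheme and is broadly correct, but you manufacture a difficulty that is not there. Your ``main obstacle'' in step~(i) dissolves once you notice that $r$-monomorphisms form a subclass of ordinary monomorphisms, so every ordinary injective $S$-act is automatically $r$-injective; in particular the usual injective hull $E(A)$, which exists in \textbf{S-Act}, already furnishes the $\mathcal{M}$-injective $E$ you need, with no transfinite small-object construction required. (Corollary~\ref{p-inj}(ii) sharpens this: $c^r_{E(A)}(A)$ is $r$-injective.) The embedding $A\hookrightarrow E(A)$ need not itself lie in $\mathcal{M}$ for your steps~(ii)--(iii) to run; you only need $A$ to sit inside some $\mathcal{M}$-injective. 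Your secondary worry about stability of $\mathcal{M}$-essentialness under directed unions is handled by the inductive property of the radical class $\mathbb{R}_r$: if $A$ is $r$-dense in each $H_i$ of a chain, then $(\bigcup_i H_i)/A=\bigcup_i(H_i/A)\in\mathbb{R}_r$, while largeness under unions is standard.
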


\begin{proposition}[Third well behaviour Theorem \cite{ban}]
The following conditions are equivalent, for an $\mathcal{M}$-morphism $m:A\rightarrow B$ :\\
\emph{(i)} $B$ is an $\mathcal{M}$-injective hull of $A$.\\
\emph{(ii)} $B$ is a maximal $\mathcal{M}$-essential extension of $A$.\\
\emph{(iii)} $B$ is a minimal $\mathcal{M}$-injective extension of $A$.
\end{proposition}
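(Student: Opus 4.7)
The plan is to establish the cycle (i)$\Rightarrow$(ii)$\Rightarrow$(iii)$\Rightarrow$(i), leaning on the First and Second Well Behaviour Theorems whenever one has to trade injectivity for essentiality or vice versa. A single ancillary observation is invoked repeatedly: the composition of two $\mathcal{M}$-essential morphisms is again $\mathcal{M}$-essential, which follows in one line from the definition by applying the cancellation property twice.

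For (i)$\Rightarrow$(ii), let $m:A\to B$ exhibit $B$ as an $\mathcal{M}$-injective hull and let $n:B\to C$ be an $\mathcal{M}$-essential morphism. Since $B$ is $\mathcal{M}$-injective, the identity of $B$ extends along $n$ to a retraction $f:C\to B$ with $fn=\mathrm{id}_B$. As $\mathrm{id}_B\in\mathcal{M}$, essentiality of $n$ forces $f\in\mathcal{M}$, so $f$ is monic; combined with $f$ being split epi this makes $f$ a bimorphism, hence an isomorphism in $\textbf{S-Act}$, so $n$ is an isomorphism too. Thus $B$ admits no proper $\mathcal{M}$-essential extension, which is exactly the required maximality.

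For (ii)$\Rightarrow$(iii), the composition remark shows that any proper $\mathcal{M}$-essential extension $B\to C$ would glue with $m$ to give one of $A$, violating maximality; hence $B$ has no proper $\mathcal{M}$-essential extension and the First Well Behaviour Theorem yields that $B$ is $\mathcal{M}$-injective. Given a factorization $A\xrightarrow{n}C\xrightarrow{g}B$ of $m$ through an $\mathcal{M}$-injective $C$ with $g\in\mathcal{M}$, I would verify that $g$ is itself $\mathcal{M}$-essential: for $f:B\to D$ with $fg\in\mathcal{M}$, the composite $fm=(fg)n$ lies in $\mathcal{M}$ and essentiality of $m$ gives $f\in\mathcal{M}$. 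Since $C$ is $\mathcal{M}$-injective it has no proper $\mathcal{M}$-essential extension, so $g$ must be an isomorphism, yielding minimality.

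For (iii)$\Rightarrow$(i), the Second Well Behaviour Theorem supplies an $\mathcal{M}$-injective hull $e:A\to H$. As $B$ is $\mathcal{M}$-injective and $e\in\mathcal{M}$, there is $\varphi:H\to B$ with $\varphi e=m$; essentiality of $e$ applied to $\varphi e=m\in\mathcal{M}$ yields $\varphi\in\mathcal{M}$. So $\varphi(H)$ is an $\mathcal{M}$-injective subact of $B$ containing $m(A)$, and minimality of $B$ forces $\varphi$ to be surjective, hence an isomorphism; then $m=\varphi e$ is $\mathcal{M}$-essential and $B$ is an $\mathcal{M}$-injective hull of $A$. The one genuinely delicate point is the bimorphism-implies-isomorphism step at the end of (i)$\Rightarrow$(ii): it is standard for $\textbf{S-Act}$ but is the place where the concrete shape of the ambient category enters and cannot be deduced from the purely categorical formalities used elsewhere in the argument.
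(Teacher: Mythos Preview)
The paper does not supply a proof of this proposition at all: it is quoted verbatim from Banaschewski \cite{ban} as one of three background ``well behaviour'' theorems in the preliminaries, with no accompanying argument. So there is no paper-proof to compare your attempt against; the authors simply import the result and later verify that the hypotheses $B_1$--$B_6$ needed for Banaschewski's machinery hold for $r$-monomorphisms in \textbf{S-Act}.

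Your sketch is essentially the standard Banaschewski argument and is broadly sound, but two places deserve a little care. In (ii)$\Rightarrow$(iii) your minimality argument assumes a factorisation $m=gn$ with $g\in\mathcal{M}$; depending on how ``minimal $\mathcal{M}$-injective extension'' is phrased, one should check that the intermediate morphism really lies in $\mathcal{M}$ (in Banaschewski's setting this comes from the left-regularity/isomorphism-closure axioms on $\mathcal{M}$, the analogue of $B_2$ here). In (iii)$\Rightarrow$(i) you implicitly use that the inclusion $\varphi(H)\hookrightarrow B$ is again an $\mathcal{M}$-morphism so that minimality applies; this is not automatic for an arbitrary class $\mathcal{M}$ and in Banaschewski's framework is handled via the transferability/pushout-type axioms rather than by a bare image argument. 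None of this is fatal, but since the paper itself defers entirely to \cite{ban}, the honest comparison is: the paper gives no proof, and yours is a serviceable outline of the classical one modulo the two bookkeeping points above.
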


\medskip
 A family  $C = (C_{A})_{A\in\textbf{S-Act}}$,  with $C_{A} : Sub(A) \to Sub(A)$, assigning every subact $B\leq A$ to a subact $C_{A}(B)$ (or simply $C(A)$ when no confusion arises) is called a \textit{closure operator} on \textbf{S-Act} if it satisfies the following properties:

(c1)\ (Extension)\ $B\leq C(B)$,

(c2)\ (Monotonicity)\ $B_{1} \leq B_{2}\leq A$ implies $C(B_{1}) \leq C(B_{2})$,

(c3)\ (Continuity)\ $f (C_{A}(B)) \leq C_{C} ( f (B))$, for all homomorphisms $f : A \to C$.

\medskip
A closure $C$ is called \textit{weakly hereditary} if   $C_{A}(B) = C_{C_{A}(B)}(B)$, for every subact $B$ of every $S$-act $A$.

A closure operator $C$ is called \textit{hereditary} if it holds $C_{B}(D) = C_{A}(D)\cap B$, for every subact $D \leq B \leq A$.

\medskip
 The readers may consult \cite{Adamek, Burris, Kilp (2000)} for the general facts about category theory and universal algebra  used in this paper. Here we also follow the notations and terminologies used there.

\section{The induced closure operator  from a  radical}

Usually radicals are a rich supply for the closure operators, See \cite{Tholen (1995)}. Hence we introduce a closure operator $c^{r}$, associated with a  radical $r$ and we describe the interrelationship of these two notions.

\begin{definition}
For a given  radical $r$ of {\bf S-Act}, we define a closure operator $c^{r}  $ in the category of {\bf S-Act} by
$ c_{A}^{r}(B)=\pi^{-1}([B]_{r(A/B)})$
in which $\pi: A\to A/B$ is the canonical epimorphism.
\end{definition}

A subact $B$ of $S$-act $A$ is said to be \textit{$r$-closed } if  $c^{r}(B)=B$ and it is said to be \textit{$r$-dense} if $c^{r}(B)=A$.
A monomorphism $ m:B \rightarrow A $ is said \textit{$r$-monomorphism} if $m (B)$ is $r$-dense in $A$.

One can easily check that $c^{r}$ is an idempotent closure operator, that is  $c_{A}^{r}(c_{A}^{r}(B))=c_{A}^{r}(B)$, for every $S$-act $A$ and every subact $B$ of $A$. So the class of $r$-closed subacts of an $S$-act $A$ is of the form $\{c_{A}^{r}(B)\ | \ B\leq A\}$.

It is worth noting that the class of $r$-monomorphisms is closed under composition. Also we have the following Lemma.

\begin{lemma}\label{hom-dens}
Let $B$ be an $r$-dense subact  of an $S$-act $A$ and $\chi$ be a congruence on $B$. Then $B/\chi$ is $r$-dense in $A/(\chi\vee \Delta_{A})$.
\end{lemma}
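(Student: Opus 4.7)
The plan is to reduce $r$-density to the simpler statement that an associated Rees factor is a radical $S$-act, and then to identify the two Rees factors in question via a third-isomorphism-theorem argument.

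First, I would unwind the definition: from $c^r_A(B) = \pi^{-1}([B]_{r(A/B)})$ with $\pi\colon A \to A/B$ the canonical projection, $B$ is $r$-dense in $A$ iff $[B]_{r(A/B)} = A/B$. Since $A/B$ contains the distinguished point $[B]$, this is equivalent to $r(A/B) = \nabla_{A/B}$, i.e.\ $A/B \in \mathbb{R}_r$. So the hypothesis rewrites as ``$A/B$ is a radical $S$-act,'' and the goal becomes $(A/\chi_A)/(B/\chi) \in \mathbb{R}_r$, where I write $\chi_A := \chi \vee \Delta_A$ (the set $\chi \cup \Delta_A$ is already a congruence on $A$, namely the smallest extension of $\chi$ described in the preliminaries, so this join equals that extension).

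The key step is then a standard third-isomorphism identification. Writing $\eta\colon A \to A/\chi_A$ for the canonical projection, one identifies $B/\chi$ with $\eta(B)$; an element $[a]_{\chi_A}$ of $A/\chi_A$ lies in $\eta(B)$ iff $a\in B$, because $\chi_A$ identifies no element of $A\setminus B$ with anything other than itself. Consequently $\chi_A \leq \rho_B$, and the Rees congruence $\rho_{B/\chi}$ on $A/\chi_A$ is exactly $\rho_B/\chi_A$: a pair $([a]_{\chi_A},[a']_{\chi_A})$ lies in $\rho_{B/\chi}$ iff $a,a'\in B$ or $(a,a')\in \chi_A$, and in either case $(a,a')\in \rho_B$. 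Hence by the third isomorphism theorem for $S$-acts,
\[
(A/\chi_A)/(B/\chi) \;=\; (A/\chi_A)/(\rho_B/\chi_A) \;\cong\; A/\rho_B \;=\; A/B,
\]
which lies in $\mathbb{R}_r$ by the opening reduction. Thus $B/\chi$ is $r$-dense in $A/\chi_A$, as claimed.

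The only delicate point is the bookkeeping in the middle step, namely pinning down $B/\chi$ as a subact of $A/\chi_A$ and checking that its Rees congruence pulls back through $\eta$ to exactly $\rho_B$; once that is done, the lemma follows at once from the reformulation of $r$-density and the third isomorphism theorem. Note that nothing about $r$ beyond condition (i) of the definition of a Hoehnke radical is actually used here — that is precisely what makes the reformulation ``$r$-dense $\Longleftrightarrow$ Rees quotient is in $\mathbb{R}_r$'' available.
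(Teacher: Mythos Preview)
Your proof is correct and follows essentially the same approach as the paper: the paper's proof consists of the two equations $(A/(\chi\vee\Delta_A))/(B/\chi)=A/B$ and $r(A/B)=\nabla_{A/B}$, which are precisely your third-isomorphism identification and your reformulation of $r$-density. You have simply supplied the bookkeeping details (that $\chi\vee\Delta_A$ is the smallest extension $\chi_A$, that $\eta(B)=B/\chi$, and that $\rho_{B/\chi}=\rho_B/\chi_A$) that the paper leaves implicit.
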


\begin{proof}
The result can easily follow from the following equations.
\[A/(\chi\vee \Delta_{A})/B/\chi=A/B \quad \& \quad  r(A/B)=\nabla_{A/B}\]
\end{proof}

\begin{proposition}\label{closed2act}
Let $r$ be a  radical and $B$ be an $r$-closed subact of an $S$-act  $A$. Then, for every $X_{A}\in \Sigma_{r(A)}$ and $X_{B}\in \Sigma_{r(B)}$,  we have $X_{A}\cap X_{B}=\emptyset$ or $X_{B}\leq X_{A}\leq B$.
\end{proposition}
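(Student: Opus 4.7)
The plan is to do a direct case analysis: assume the intersection $X_A\cap X_B$ is nonempty and deduce the inclusions $X_B\leq X_A\leq B$. So fix some $c\in X_A\cap X_B$ and let me aim for the two containments separately.

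For $X_B\leq X_A$, the key input is property (i) in the definition of a Hoehnke radical applied to the inclusion homomorphism $i\colon B\hookrightarrow A$. For any $b\in X_B$, we have $(b,c)\in r(B)$ since $X_B$ is an $r(B)$-class, and property (i) then gives $(b,c)=(i(b),i(c))\in r(A)$. Since $c\in X_A$ and $X_A$ is an $r(A)$-class, we conclude $b\in X_A$. This step uses only that $r$ is a radical, not that $B$ is $r$-closed.

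For $X_A\leq B$, this is where the $r$-closedness of $B$ (and hence the definition of $c^r$) must enter. Suppose for contradiction that some $a\in X_A$ lies outside $B$. Pick any $b\in X_B$; then $b\in X_A$ by the previous step, so $(a,b)\in r(A)$. Now consider the canonical projection $\pi\colon A\to A/B$. Applying the Hoehnke radical property to $\pi$, we get $(\pi(a),\pi(b))\in r(A/B)$. But $b\in B$, so $\pi(b)=[B]$, which shows $\pi(a)\in[B]_{r(A/B)}$, i.e.\ $a\in\pi^{-1}([B]_{r(A/B)})=c^r_A(B)$. Since $B$ is $r$-closed, $c^r_A(B)=B$, giving $a\in B$, contradicting our assumption. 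Therefore $X_A\subseteq B$, and being a subact of $A$ it is in particular a subact of $B$.

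The main obstacle, if any, is making sure that the Hoehnke radical condition is strong enough for the second step — specifically that applying $r$ to the non-injective map $\pi$ really does preserve the relation — but this is exactly what clause (i) of the radical definition guarantees, so the argument goes through cleanly. No Kurosh-Amitsur structure, hereditariness, or Rees-congruence assumption is needed; the proposition holds for an arbitrary Hoehnke radical.
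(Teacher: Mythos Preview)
Your proof is correct. The argument for $X_A\leq B$ is essentially the paper's: both push the pair $(a,c)\in r(A)$ (with $c\in X_A\cap B$) through the canonical map $\pi\colon A\to A/B$ and use $r$-closedness to conclude $a\in B$. Where you differ is in the step $X_B\leq X_A$. You obtain it immediately by applying the Hoehnke property to the inclusion $i\colon B\hookrightarrow A$, which gives $r(B)\subseteq r(A)\cap(B\times B)$ and hence that the $r(B)$-class $X_B$ sits inside the $r(A)$-class of $c$, namely $X_A$. The paper instead first establishes $X_A\leq B$, then passes to the quotient $B\to B/X_A$, invokes Lemma~\ref{maranda} to identify $r(A/X_A)=r(A)/\rho_{X_A}$, and argues that the $r(B/X_A)$-class of the point $[X_A]$ is a singleton, forcing $\pi(X_B)=\{[X_A]\}$. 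Your route is shorter, avoids Lemma~\ref{maranda} entirely, and makes the two inclusions logically independent of one another (indeed your second step could use $c$ directly in place of an arbitrary $b\in X_B$). The paper's route, by contrast, exhibits the general technique of comparing $r$-classes via quotient maps and Lemma~\ref{maranda}, which is reused later (e.g.\ in Proposition~\ref{closed2act pre-Ku.}).
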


\begin{proof}
To prove, we assume   $X_{A}\cap X_{B}\neq\emptyset$ and we show $X_{B}\leq X_{A}\leq B$. To do so, we consider the canonical epimorphism $\pi:A\to A/B$ and we have $\pi(r(A))\subseteq  r(A/B)$. Hence  there exists $Y_{A/B}\in \Sigma_{r(A/B)}$  such that $\pi(X_{A})\subseteq Y_{A/B}$, since $X_{A}\in \Sigma_{r(A)}$. But  since $ X_{A}\cap X_{B}\leq X_{A}\cap B$ is non-empty, the homomorphic image of $B$ under $\pi:A\to A/B$ is a zero element of $Y_{A/B}$. So $Y_{A/B}\leq [B]_{r(A/B)}$. Therefore $X_{A}\subseteq c_{A}^{r}(B)=B$.

Now  by considering the canonical epimorphism $\pi:B\to B/X_{A}$, we have $\pi(r(B))\subseteq  r(B/X_{A})$.  Hence there exists $Y_{B/X_{A}}\in \Sigma_{r(B/X_{A})}$ with $\pi(X_{B}),\pi(X_{A})\subseteq Y_{B/X_{A}}$, since $X_{B}\in \Sigma_{r(B)}$ and $X_{A}\cap X_{B}\neq\emptyset$. Also $[X_{A}]_{r(B/X_{A})}$ is singleton since, by Lemma  \ref{maranda}, $r(B/X_{A})\leq r(A/X_{A})\wedge \nabla_{B/X_{A}}=r(A)/X_{A}$. Therefore $X_{B}\leq \pi^{-1}(Y_{B/X_{A}})=X_{A}$.
\end{proof}

One is tempted to assume that the radical class of a radical $r$ is closed under coproduct. But this is not true in general, see example  $3.1$  from  \cite{Haddadi}. we recall some equivalent conditions with closedness of $\mathbb{R}_{r}$ under coproducts, for a Kurosh-Amitsur radical $r$, from \cite{Haddadi}, and then using the mentioned closure operator we give another characterization for the closedness  of $\mathbb{R}_{r} $ under coproduct in Theorem \ref{copro-clos}.

\begin{theorem}\label{copro-Kurosh-R}
Given a Kurosh-Amitsur radical $r$,  the following statements are equivalent
\begin{enumerate}
\item The class $\mathbb{R}_{r}$ is closed under coproduct.
\item For every $A\in \text{\bf{S-Act}}$,  $| \Sigma_{r(A)}| \leq 1$ and  $\mathbb{R}_{r}$ contains a non-trivial $S$-act.
\item For the trivial $S$-act $\Theta$,  $\Theta\amalg\Theta\in \mathbb{R}_{r}$.
\item For $B\leq A$,  $\pi^{-1}([B]_{r(A/B)})$ contains  all zeros of $A$ where $\pi: A\rightarrow A/B$ is the canonical epimorphism.
\end{enumerate}
\end{theorem}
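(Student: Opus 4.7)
The plan is to prove the four conditions equivalent by establishing $(1)\Leftrightarrow(3)\Leftrightarrow(2)$ together with the side equivalence $(3)\Leftrightarrow(4)$, using (3) as the central hub because $\Theta\amalg\Theta$ is the minimal obstruction to coproduct-closure of $\mathbb{R}_r$.

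The implications bracketing (3) are short. For $(1)\Rightarrow(3)$, $\Theta\in\mathbb{R}_r$ trivially, so (1) gives $\Theta\amalg\Theta\in\mathbb{R}_r$. For $(3)\Rightarrow(4)$, given $B\leq A$ and a zero $z\in A\setminus B$, the two-element subact $\{\pi(z),[B]\}\leq A/B$ is isomorphic to $\Theta\amalg\Theta$ (two fixed points), hence lies in $\mathbb{R}_r$ by (3), and Remark~\ref{•}(i) forces it into a single $r(A/B)$-class, which must be $[B]_{r(A/B)}$. Conversely, for $(4)\Rightarrow(3)$ I would specialize (4) to $A=\Theta\amalg\Theta=\{x,y\}$ and $B=\{x\}$: both $x,y$ are zeros, so (4) places them both in $[x]_{r(A)}$, giving $r(A)=\nabla_A$.

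For $(3)\Rightarrow(2)$, the nontrivial radical is $\Theta\amalg\Theta$ itself; for the cardinality bound, if $X_1,X_2\in\Sigma_{r(A)}$ were distinct, the Rees quotient $A':=A/\rho_{\{X_1,X_2\}}$ would contain $\{[X_1],[X_2]\}\cong\Theta\amalg\Theta\in\mathbb{R}_r$, which by Remark~\ref{•}(i) sits in one $r(A')$-class; but Lemma~\ref{maranda} (since $\rho_{\{X_1,X_2\}}\subseteq r(A)$) identifies $r(A')$ with $r(A)/\rho_{\{X_1,X_2\}}$, placing $[X_1]$ and $[X_2]$ in distinct singleton classes --- contradiction. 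For $(2)\Rightarrow(3)$, take a nontrivial $T\in\mathbb{R}_r$; the homomorphism property places each copy of $T$ in $T\amalg T$ inside an $r(T\amalg T)$-class, and $|\Sigma_{r(T\amalg T)}|\leq 1$ merges these into a single class, so $T\amalg T\in\mathbb{R}_r$; the surjection $T\amalg T\twoheadrightarrow\Theta\amalg\Theta$ together with homomorphic closure of $\mathbb{R}_r$ then yields (3).

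The main step --- and the expected obstacle --- is $(3)\Rightarrow(1)$, since one must absorb both nontrivial radical components and singleton trivial components into a common $r(A)$-class. Assuming (3) (and hence (2)), let $A=\coprod_{i\in I}A_i$ with each $A_i\in\mathbb{R}_r$ and let $\Sigma$ be the subfamily of nontrivial $A_i$. The homomorphism property gives $\nabla_{A_i}\subseteq r(A)$, so $\rho_\Sigma\subseteq r(A)$, and the Rees quotient $A'':=A/\rho_\Sigma$ consists entirely of fixed points --- the collapsed classes $[A_i]$ for $A_i\in\Sigma$ together with the surviving singletons from the trivial components --- so $A''\cong\Theta^{\amalg\kappa}$ for some cardinal $\kappa$. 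The crux is showing $\Theta^{\amalg\kappa}\in\mathbb{R}_r$: for finite $\kappa$ this is an induction using pairwise (3) together with $|\Sigma_{r(\cdot)}|\leq 1$ to merge all fixed points into a single class, and the inductive property of radical classes (applied to the ascending chain of finite sub-coproducts) handles infinite $\kappa$. Finally, Lemma~\ref{maranda} yields $\nabla_{A''}=r(A'')=r(A)/\rho_\Sigma$, which, using $\rho_\Sigma\subseteq r(A)$, unwinds to $r(A)=\nabla_A$, so $A\in\mathbb{R}_r$. The key idea is to collapse the nontrivial components \emph{first}, reducing everything to the tractable coproduct-of-fixed-points case where Lemma~\ref{maranda} bridges back to $r(A)$.
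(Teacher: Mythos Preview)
The paper does not actually prove Theorem~\ref{copro-Kurosh-R}; it is recalled from the companion paper \cite{Haddadi} without argument, so there is no in-paper proof to compare against. Your proposal is essentially correct and self-contained, and would serve as a proof.

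One small refinement: in the step $(3)\Rightarrow(1)$, your treatment of infinite $\kappa$ via ``the ascending chain of finite sub-coproducts'' is slightly imprecise, since for uncountable index sets the finite sub-coproducts form a directed family rather than a chain, and the inductive property as stated in the paper is for chains. You can sidestep all induction by observing directly that, once (2) is available, any two points $x,y$ of $\Theta^{\amalg\kappa}$ span a copy of $\Theta\amalg\Theta\in\mathbb{R}_r$, which by Remark~\ref{•}(i) lies in the (unique, by (2)) member of $\Sigma_{r(\Theta^{\amalg\kappa})}$; hence every point lies in that member and $\Theta^{\amalg\kappa}\in\mathbb{R}_r$ for all $\kappa\ge 2$. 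Alternatively, well-order the index set and run a straightforward transfinite induction. With that adjustment, every link in your chain $(1)\Leftrightarrow(3)\Leftrightarrow(2)$ and $(3)\Leftrightarrow(4)$ checks out.
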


As a corollary of the above theorem, we have:

\begin{corollary}\label{copro. zero}
Given a Kurosh-Amitsur radical $r$, the class $\mathbb{R}_{r}$ is closed under coproducts if and only if every $S$-act  $A$ has a  subact such as  $A_{r}\in \mathbb{R}_{r}$  such that  $A_{r}$ contains  all zeros of $A$ and $r(A)=\rho_{A_{r}}$.
\end{corollary}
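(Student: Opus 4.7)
My approach is to reduce both implications to condition~(4) of Theorem~\ref{copro-Kurosh-R}, which states that $\mathbb{R}_r$ is closed under coproducts precisely when, for every subact $B\leq A$, the preimage $\pi^{-1}([B]_{r(A/B)})$ contains all zeros of $A$ (with $\pi\colon A\to A/B$ the canonical epimorphism). The recurring observation used in both directions is that $[B]\in A/B$ is itself a zero of $A/B$, since $s[B]=[sb]=[B]$ for any $b\in B$ and $s\in S$.

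For the direction assuming the existence of $A_r$, I would apply the hypothesis to $A/B$ for an arbitrary subact $B\leq A$, producing $(A/B)_r\in\mathbb{R}_r$ containing all zeros of $A/B$ and satisfying $r(A/B)=\rho_{(A/B)_r}$. Since $[B]$ is a zero of $A/B$, it lies in $(A/B)_r$; hence the Rees class $[B]_{r(A/B)}$ coincides with $(A/B)_r$. Any zero $z\in A$ maps under $\pi$ to a zero of $A/B$, which then lies in $(A/B)_r=[B]_{r(A/B)}$, so $z\in\pi^{-1}([B]_{r(A/B)})$. This verifies condition~(4), and Theorem~\ref{copro-Kurosh-R} then yields coproduct closedness of $\mathbb{R}_r$.

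For the converse, Theorem~\ref{copro-Kurosh-R} supplies both $|\Sigma_{r(A)}|\leq 1$ and condition~(4). When $\Sigma_{r(A)}=\{A_r\}$, the Kurosh--Amitsur property immediately gives $A_r\in\mathbb{R}_r$ and $r(A)=\rho_{A_r}$; to show $A_r$ contains all zeros I plug $B=A_r$ into (4) and invoke Lemma~\ref{maranda} to obtain $r(A/A_r)=r(A)/\rho_{A_r}=\Delta_{A/A_r}$, whence $[A_r]_{r(A/A_r)}=\{[A_r]\}$ and $\pi^{-1}(\{[A_r]\})=A_r$ itself contains all zeros. The case $\Sigma_{r(A)}=\emptyset$, equivalently $r(A)=\Delta_A$, is handled separately: if $A$ had two distinct zeros $z_1,z_2$, then $\{z_1,z_2\}$ would be a non-trivial subact isomorphic to $\Theta\amalg\Theta$, which lies in $\mathbb{R}_r$ by condition~(3) of Theorem~\ref{copro-Kurosh-R}, and Remark~\ref{•}(i) would then force a non-empty $\Sigma_{r(A)}$, a contradiction. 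Hence $A$ has at most one zero $z_0$, and $A_r=\{z_0\}$ (or the empty subact when $A$ has no zero) satisfies all the requirements since $\rho_{A_r}=\Delta_A=r(A)$.

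The main obstacle I anticipate is precisely this semisimple sub-case: no non-trivial $r(A)$-class is available to supply $A_r$ directly, so the coproduct-closure ingredient $\Theta\amalg\Theta\in\mathbb{R}_r$ (condition~(3) of Theorem~\ref{copro-Kurosh-R}) must be invoked to rule out multiple zeros before the trivial choice of $A_r$ can be made. A minor bookkeeping point is the convention that $\rho_{A_r}=\Delta_A$ when $A_r$ is a singleton or empty, in line with the Rees-congruence definition in the preliminaries.
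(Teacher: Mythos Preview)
Your argument is correct and aligns with the paper's intent: the corollary is stated without proof, merely as an immediate consequence of Theorem~\ref{copro-Kurosh-R}, and you have simply unpacked that implication via conditions~(2)--(4). Your careful treatment of the semisimple sub-case (using $\Theta\amalg\Theta\in\mathbb{R}_r$ to exclude multiple zeros) fills in exactly the detail the paper leaves to the reader.
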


\begin{theorem}\label{copro-clos}
Let $r$ be a Kurosh-Amitsur radical. Then $\mathbb{R}_{r}$ is closed under coproducts if and only if  $r(A/B)=\Delta_{A/B}$, for every $S$-act $A$ and every $r$-closed subact   $B$ of $A$.
\end{theorem}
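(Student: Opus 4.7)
The plan is to reduce both implications to Theorem \ref{copro-Kurosh-R}, using Lemma \ref{maranda} to compute $r$ on quotients and Corollary \ref{copro. zero} to expose the internal radical subact of a given $S$-act.

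For the forward direction, suppose $\mathbb{R}_{r}$ is closed under coproducts and let $B\leq A$ be $r$-closed. By Corollary \ref{copro. zero}, the quotient $A/B$ admits a subact $(A/B)_{r}\in\mathbb{R}_{r}$ containing every zero of $A/B$, with $r(A/B)=\rho_{(A/B)_{r}}$. The image $[B]$ is a zero of $A/B$, so $[B]\in (A/B)_{r}$; and since the Rees congruence $\rho_{(A/B)_{r}}$ has exactly one non-singleton class, namely $(A/B)_{r}$ itself, the $r(A/B)$-class of $[B]$ coincides with $(A/B)_{r}$. But $r$-closedness of $B$ asserts $[B]_{r(A/B)}=\{[B]\}$, forcing $(A/B)_{r}=\{[B]\}$ and therefore $r(A/B)=\rho_{\{[B]\}}=\Delta_{A/B}$.

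For the reverse direction, I would verify the first conjunct of Theorem \ref{copro-Kurosh-R}(2): $|\Sigma_{r(A)}|\leq 1$ for every $A$. Pick any $X\in\Sigma_{r(A)}$. Since $\rho_{X}\subseteq r(A)$, Lemma \ref{maranda} yields $r(A/X)=r(A)/\rho_{X}$, and in $A/X$ the entire class $X$ has collapsed to the single point $[X]$, so the $r(A/X)$-class of $[X]$ is the singleton $\{[X]\}$. This shows $c_{A}^{r}(X)=\pi^{-1}(\{[X]\})=X$, i.e.\ $X$ is $r$-closed. The hypothesis then gives $r(A/X)=\Delta_{A/X}$; but the non-singleton classes of $r(A)/\rho_{X}$ are in bijection with $\Sigma_{r(A)}\setminus\{X\}$, so this set is empty, forcing $\Sigma_{r(A)}=\{X\}$.

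The delicate point I expect is the second conjunct of Theorem \ref{copro-Kurosh-R}(2) --- that $\mathbb{R}_{r}$ also contain a non-trivial $S$-act --- which is not an immediate consequence of the hypothesis (the everywhere-diagonal radical satisfies the condition vacuously). To bypass this, I would instead target clause (4) of Theorem \ref{copro-Kurosh-R} and show directly that $c_{A}^{r}(B)$ contains every zero of $A$: for a putative zero $0\in A\setminus B'$ where $B':=c_{A}^{r}(B)$, the images $[0]$ and $[B']$ are distinct zeros of the semisimple $A/B'$, and a contradiction can be produced by applying the hypothesis to the $r$-closure of $B'\cup\{0\}$ and tracking the resulting Rees-class structure inside $A/B'$.
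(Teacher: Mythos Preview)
Your forward direction and the core of your reverse direction coincide with the paper's argument: the paper likewise invokes Corollary~\ref{copro. zero} for $(\Rightarrow)$, and for $(\Leftarrow)$ it picks $B\in\Sigma_{r(A)}$, uses Lemma~\ref{maranda} to see that $B$ is $r$-closed, and concludes $|\Sigma_{r(A)}|\le 1$ from the hypothesis. So on the parts you carry out in full, you match the paper.

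You are right, however, to flag the second conjunct of Theorem~\ref{copro-Kurosh-R}(2). The paper's proof announces that it will ``show the second assertion of Theorem~\ref{copro-Kurosh-R}'' but in fact establishes only $|\Sigma_{r(A)}|\le 1$ and never verifies that $\mathbb{R}_r$ contains a non-trivial $S$-act. Your own observation about the everywhere-diagonal radical shows that this is not a cosmetic omission: for that $r$ every subact is $r$-closed and $r(A/B)=\Delta_{A/B}$ holds throughout, yet $\Theta\amalg\Theta\notin\mathbb{R}_r$, so $\mathbb{R}_r$ is not closed under coproducts. The stated equivalence thus fails for this $r$, and the paper's proof shares the gap you identified.

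For the same reason your proposed detour through clause~(4) cannot succeed. Take $A=\{0_1,0_2\}$ with trivial action and $B=\{0_1\}$ under the diagonal radical: then $B'=c^r_A(B)=B$, the quotient $A/B'\cong A$ is semisimple, and the $r$-closure of $B'\cup\{0_2\}$ is all of $A$, with $r(A/A)=\Delta$ trivially---no contradiction arises from ``tracking the Rees-class structure''. Any correct proof of $(\Leftarrow)$ will need an additional hypothesis excluding the diagonal radical (equivalently, that $\mathbb{R}_r$ contains a non-trivial object), or the statement itself must be adjusted.
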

\begin{proof}
$(\Rightarrow)$ Let $B$ be an $r$-closed subact of an $S$-act $A$. Then, by Corollary \ref{copro. zero}, $A/B$ has a subact such as $X_{A/B}\in \mathbb{R}_{r}$ such that $X_{A/B} $ contains all the zeros of $A/B$ and $r(A/B)=\rho_{X_{A/B}}$. But since the image of $B$ under  the canonical homomorphism $\pi:A\to A/B$ is a zero element of $A/B$,  $[B]_{r(A/B)}= X_{A/B}$. Also since  $B$ is an $r$-closed subact of $A$, we have    $B=\pi^{-1}([B]_{r(A/B)})=\pi^{-1}( X_{A/B})$. Therefore $X_{A/B}=B/B$ and hence $r(A/B)=\Delta_{A/B}$.

\medskip
$(\Leftarrow)$ For the converse we show the second assertion of Theorem \ref{copro-Kurosh-R}. Let $A$ be a non semisimple $S$-act and $B\in \Sigma_{r(A)}$. Then, by Lemma \ref{maranda}, $r(A/B)=r(A)/B$ and hence $B=\pi^{-1}([B]_{r(A/B)})$, where $\pi:A\to A/B$ is the canonical homomorphism. That is, $B$ is an $r$-closed subact of $A$. Now $r(A/B)=\Delta_{A/B}$ follows from the hypothesis. This means that $r(A)=\rho_{B}$. So, for every $S$-act $A$, $|\Sigma_{r(A)}|\leq 1$.
\end{proof}

\begin{proposition}\label{close coproduct}
Let $r$ be a   radical whose semisimple class is closed under coproducts  and $B$ be a proper $r$-dense subact of an $S$-act $A\in \mathbb{S}_{r}$. Then there exists $x\in A\setminus B$ and $s\in S$ such that $sx\in B$.
\end{proposition}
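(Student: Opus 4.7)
The plan is to argue by contradiction. First I would unpack the hypothesis that $B$ is $r$-dense: by definition $c_A^r(B)=\pi^{-1}([B]_{r(A/B)})=A$, where $\pi:A\to A/B$ is the canonical epimorphism. Since $B$ collapses to a single point $[B]$ in $A/B$, saying that every element of $A$ lies in $\pi^{-1}([B]_{r(A/B)})$ forces $r(A/B)$ to have a single class, i.e.\ $r(A/B)=\nabla_{A/B}$, so $A/B\in\mathbb{R}_r$.

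Next, I would assume toward a contradiction that for every $x\in A\setminus B$ and every $s\in S$ we have $sx\notin B$. This is precisely the statement that $A\setminus B$ is closed under the action, so $A\setminus B$ is a subact of $A$, and we obtain a decomposition $A=B\amalg(A\setminus B)$ in \textbf{S-Act}. Passing to the Rees quotient, since the action never sends $A\setminus B$ into $B$, no new identifications are forced, and hence $A/B\cong \{\ast\}\amalg (A\setminus B)$, where $\{\ast\}$ is the singleton (trivial) $S$-act corresponding to the collapsed class of $B$.

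Now I would use the hypothesis on $\mathbb{S}_r$. Since $A\in\mathbb{S}_r$ and $\mathbb{S}_r$ is closed under subacts, we get $A\setminus B\in\mathbb{S}_r$. The trivial $S$-act $\{\ast\}$ belongs to $\mathbb{S}_r$ as well, and by the standing assumption $\mathbb{S}_r$ is closed under coproducts, so $A/B\cong\{\ast\}\amalg(A\setminus B)\in\mathbb{S}_r$. Combined with $A/B\in\mathbb{R}_r$ from the first step, this yields $\Delta_{A/B}=r(A/B)=\nabla_{A/B}$, hence $|A/B|\le 1$. But $B$ is a proper subact, so $A\setminus B\ne\emptyset$ and $|A/B|\ge 2$, a contradiction.

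There is no real obstacle: the whole argument rests on identifying $A/B$ once $A\setminus B$ is seen to be a subact, and on recognizing $A/B$ as simultaneously a radical and a semisimple $S$-act. The only point that needs a little care is the verification that $A/B\cong\{\ast\}\amalg(A\setminus B)$ under the assumption $s(A\setminus B)\subseteq A\setminus B$, which is immediate from the definition of the Rees congruence $\rho_B$.
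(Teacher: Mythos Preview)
Your proof is correct and follows essentially the same line as the paper's own argument: assume for contradiction that $A\setminus B$ is stable under the action, identify $A/B\cong (A\setminus B)\amalg\Theta$, and then use closure of $\mathbb{S}_r$ under subacts and coproducts together with $A/B\in\mathbb{R}_r$ to force $A/B$ to be trivial, contradicting properness of $B$. Your write-up is in fact slightly more explicit than the paper's in justifying why $r$-density of $B$ gives $A/B\in\mathbb{R}_r$.
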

\begin{proof}
To prove, we suppose $sx\notin B$, for every $s\in S$ and $x\in A\setminus B$, and we get a contradiction.    Indeed, if $sx\notin B$, for every $s$ and $x$, then $A\setminus B$ is a subact of $A$ and so, $A/B \cong (A\setminus B)\amalg\Theta$, where $\Theta$ is a singleton trivial $S$-act. But since $\mathbb{S}_{r}$ is closed under taking subacts and $A\in \mathbb{S}_{r}$, $A\setminus B\in \mathbb{S}_{r}$. So $A/B=(A\setminus B) \amalg \Theta\in \mathbb{S}_{r}$ follows from the closedness of $\mathbb{S}_{r}$ under coproduct. Therefore $A/B\in \mathbb{S}_{r}\cap \mathbb{R}_{r}$ and hence $A/B$ is a trivial $S$-act. So $A=B$ and this contradicts the hypothesis.
\end{proof}

For some especial kind of radical more relations between $r$ and $c^{r}$ will display. In the following, we give some of them.

\begin{definition}
A  radical $r$ is called
\begin{enumerate}
\item[1-]\textit{pre-hereditary} if for every $S$-act $A$  and  $Y\leq X\in\Sigma_{r(A)}$, $Y\in \mathbb{R}_{r}$. \medskip
\item[2-]\textit{weakly-hereditary} if, for every $S$-act $A$ with a zero element $\theta$ and  $X\in\Sigma_{r(A)}$ with  $\theta\in X$, $X\in \mathbb{R}_{r}$. \medskip
\item[3-]\textit{zero-hereditary} if,  for every $S$-act $A$ with a zero element $\theta$ and  $Y\leq X\in\Sigma_{r(A)}$ with  $\theta\in Y$, $Y\in \mathbb{R}_{r}$. \medskip
\item[4-]\textit{pre-Kurosh} if,  for every $S$-act $A$ and  $ X\in\Sigma_{r(A)}$, $X\in \mathbb{R}_{r}$.
\end{enumerate}
\end{definition}

Figure \ref{fig} present the relation between the different kinds of radicals.

\begin{figure}[tbh]
\begin{tikzpicture}
\node [opmprocess] (7) {hereditary Kurosh-Amitsur};
\node [ below=of 7, yshift=-30pt](X)  {$\bigcirc$};
\node [opmprocess, below=of 7, xshift=-80pt] (5) {hereditary};
\node [opmprocess, below=of 5, yshift=-10pt] (4) {pre-hereditary};
\node [opmprocess, right=of 4, xshift=40pt] (6) {Kurosh-Amitsur};
\node [opmprocess, below=of 6, yshift=-10pt] (8) {pre-Kurosh};
\node [opmprocess, below=of 4, yshift=-10pt] (3) {Zero-hereditary};
\node [opmprocess, below=of 3, xshift=80pt] (2) {weakly-hereditary};
\node [opmprocess, below=of 2, yshift=-10pt] (1) {Hohenke};
\path  (5) edge[opmoutput] (4);
\path  (2) edge[opmoutput] (1);
\path  (3) edge[opmoutput] (2);
\path  (4) edge[opmoutput] (3);
\path  (7) edge[opmoutput] (5);
\path  (X) edge[opmoutput] (6);
\path  (6) edge[opmoutput] (8);
\path  (X) edge[opmoutput] (4);
\path  (X) edge[opmaffects] (7);
\path  (8) edge[opmoutput] (2);
\end{tikzpicture}
\caption{}\label{fig}
\end{figure}

\begin{theorem}\label{weak-here}
Given a  radical $r$, $c^{r}$ is weakly hereditary if and only if, $r$ is weakly hereditary.
\end{theorem}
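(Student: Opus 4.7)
The plan is to unpack both conditions in terms of the same object and exhibit a bijection between them. By definition, weak heredity of $c^r$ demands that $c_A^r(B)=c_{c_A^r(B)}^r(B)$ for every $B\le A$. Set $C:=c_A^r(B)=\pi^{-1}([B]_{r(A/B)})$, where $\pi\colon A\to A/B$. The quotient $C/B$, viewed inside $A/B$, is exactly the $r(A/B)$-class of $\pi(B)$, and $\pi(B)$ is a zero element of $A/B$. So what must be verified in order for $c_C^r(B)=C$ to hold is precisely that the $r(A/B)$-class $C/B$ lies in $\mathbb{R}_r$; this observation is the bridge between the two notions of weak heredity.

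For the ``$\Leftarrow$'' direction, assume $r$ is weakly hereditary and fix $B\le A$ and $C=c_A^r(B)$. If $C/B$ is trivial (one element), then $C=B$ and both sides of $c_A^r(B)=c_C^r(B)$ collapse to $B$. Otherwise $C/B=[B]_{r(A/B)}\in\Sigma_{r(A/B)}$ contains the zero $\pi(B)$ of $A/B$, so the weakly-hereditary hypothesis applied in $A/B$ gives $C/B\in\mathbb{R}_r$, i.e.\ $r(C/B)=\nabla_{C/B}$. Therefore $[B]_{r(C/B)}=C/B$ and $c_C^r(B)=(\pi')^{-1}(C/B)=C$, where $\pi'\colon C\to C/B$ is the canonical epimorphism.

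For the ``$\Rightarrow$'' direction, assume $c^r$ is weakly hereditary and take an $S$-act $A$ with a zero $\theta$ together with $X\in\Sigma_{r(A)}$ such that $\theta\in X$; the goal is $X\in\mathbb{R}_r$. The trick is to test weak heredity of $c^r$ on the one-element subact $\{\theta\}$. Because $\rho_{\{\theta\}}=\Delta_A$, the canonical map $\pi\colon A\to A/\{\theta\}$ is essentially the identity, so $c_A^r(\{\theta\})=[\theta]_{r(A)}=X$. Weak heredity of $c^r$ then yields $c_X^r(\{\theta\})=X$, and the same computation inside $X$ shows $c_X^r(\{\theta\})=[\theta]_{r(X)}$. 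Hence $[\theta]_{r(X)}=X$, which forces $r(X)=\nabla_X$, i.e.\ $X\in\mathbb{R}_r$.

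The main obstacle in writing this up cleanly is keeping track of two different ``quotient'' pictures simultaneously: the $r(A/B)$-class $C/B$ (viewed inside $A/B$) for the backward implication, and the one-element subact trick $B=\{\theta\}$ (which makes $A/B\cong A$) for the forward implication. Once one notices that both sides of the weakly hereditary equation for $c^r$ can be phrased as ``$C/B$ is an $r$-radical subact of $A/B$ containing the zero $\pi(B)$'', the equivalence is essentially a definition-chase, with the edge case $|C/B|=1$ being the only subtlety, and it is handled immediately.
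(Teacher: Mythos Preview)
Your proof is correct and follows essentially the same route as the paper: for the forward direction you test weak heredity of $c^r$ on the singleton $\{\theta\}$ to recover $X=[\theta]_{r(A)}$ and then $[\theta]_{r(X)}=X$, and for the backward direction you identify $C/B$ as the $r(A/B)$-class of the zero $\pi(B)$ and apply weak heredity of $r$ in $A/B$. The only difference is cosmetic: you explicitly dispose of the edge case $|C/B|=1$ (where $B$ is $r$-closed and both sides equal $B$), which the paper's argument tacitly assumes away when it asserts $c_A^r(B)/B\in\Sigma_{r(A/B)}$.
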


\begin{proof}
($ \Rightarrow $) Let $A$ be an $S$-act with a zero element $\theta$ and $X$ be an $r$-class  of $A$ with $\theta\in X $. Then one can easily see that $X = c^{r}_{A}(\{\theta\})$. Now weakly heredity of $r$ implies $X=c^{r}_{A}(\{\theta\})=c^{r}_{c^{r}_{A}(\{\theta\})}(\{\theta\})=c^{r}_{X}(\{\theta\})$. Hence $X=\pi^{-1}([\{\theta\}]_{r(X/\{\theta\})})$. That is $r(X/\{\theta\})=\nabla_{X/\{\theta\}}$. Therefore $X\cong  X/\{\theta\}\in\mathbb{R}_{r}$.

 \medskip
($ \Leftarrow $)Let $r$ be a weakly hereditary radical. Then since  $c^{r}_{A}(B)/B\in \Sigma_{r(A/B)}$ and the image of $B$ under canonical epimorphism is a zero element of  $c^{r}_{A}(B)/B$,  for every subact $B$ of an $S$-act $A$. So, we have  $c^{r}_{A}(B)/B\in \mathbb{R}_{r}$. That is $r(c^{r}_{A}(B)/B)=\nabla_{c^{r}_{A}(B)/B}$.   Therefore $c^{r}_{c^{r}_{A}(B)}(B)=\pi^{-1}[B]_{r(c^{r}_{A}(B)/B)}=c^{r}_{A}(B)$ and we are done.
\end{proof}

\begin{proposition}\label{closed2act pre-Ku.}
Let $r$ be a pre-Kurosh radical and $B$ be an $r$-closed subact of an $S$-act  $A$. Then $\Sigma_{r(B)}\subseteq \Sigma_{r(A)}$. 
\end{proposition}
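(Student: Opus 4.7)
The plan is to show that every member of $\Sigma_{r(B)}$ coincides with a member of $\Sigma_{r(A)}$, by sandwiching each $r(B)$-class between an $r(A)$-class and another $r(B)$-class and then collapsing the sandwich. The two main tools will be Remark 1.1(i) (which lets us capture any radical subact inside a class of $\Sigma_{r(-)}$) and Proposition 2.5 (closed2act), which restricts how $\Sigma_{r(A)}$-classes and $\Sigma_{r(B)}$-classes can overlap when $B$ is $r$-closed in $A$. The pre-Kurosh hypothesis will be used to guarantee that the classes in both $\Sigma_{r(A)}$ and $\Sigma_{r(B)}$ lie in $\mathbb{R}_r$, so that Remark 1.1(i) can be invoked in both directions.

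First I would fix $X_B \in \Sigma_{r(B)}$. Since $r$ is pre-Kurosh, $X_B \in \mathbb{R}_r$, so $X_B$, regarded as a subact of $A$, is a radical subact of $A$. Remark 1.1(i) then produces some $X_A \in \Sigma_{r(A)}$ with $X_B \leq X_A$. Because $X_B$ is non-empty (as a non-trivial subact), we have $X_A \cap X_B \neq \emptyset$, and now $r$-closedness of $B$ in $A$ combined with Proposition 2.5 forces the inclusions $X_B \leq X_A \leq B$.

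Next I would apply the same trick the other way around inside $B$. Since $X_A \leq B$ and $X_A \in \mathbb{R}_r$ (again by pre-Kurosh, this time applied to $A$), Remark 1.1(i), used now in the ambient $S$-act $B$, yields some $X' \in \Sigma_{r(B)}$ with $X_A \leq X'$. Then $X_B \leq X_A \leq X'$ shows $X_B \cap X' \supseteq X_B \neq \emptyset$; but distinct members of $\Sigma_{r(B)}$ are disjoint (being $r(B)$-classes), so $X_B = X'$. Therefore $X_A \leq X_B$, and combined with $X_B \leq X_A$ this gives $X_A = X_B$, so $X_B \in \Sigma_{r(A)}$. This yields $\Sigma_{r(B)} \subseteq \Sigma_{r(A)}$.

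The only delicate point is checking that the two applications of Remark 1.1(i) are legitimate, i.e.\ that the objects we feed into it really are radical subacts; this is exactly where pre-Kurosh is essential (a Hoehnke radical would not suffice). The $r$-closedness of $B$ enters only through Proposition 2.5, to promote the initial inclusion $X_B \leq X_A$ to the stronger $X_A \leq B$ that makes the second application of Remark 1.1(i) possible inside $B$. No further calculation is needed.
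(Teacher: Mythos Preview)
Your proposal is correct and follows essentially the same route as the paper's proof: pick $X_B\in\Sigma_{r(B)}$, push it into some $X_A\in\Sigma_{r(A)}$, use Proposition~\ref{closed2act} to get $X_B\leq X_A\leq B$, then push $X_A$ back into some $X'\in\Sigma_{r(B)}$ and collapse by disjointness. The only cosmetic difference is that the paper obtains the first inclusion $X_B\subseteq X_A$ directly from the general fact $r(B)\leq r(A)\wedge\nabla_B$ (together with Remark~\ref{•}(ii)) rather than by first invoking pre-Kurosh on $X_B$ and then Remark~\ref{•}(i); both arguments need pre-Kurosh for the second inclusion $X_A\subseteq X'$, so nothing of substance changes.
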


\begin{proof}
We know that $r(B)\leq r(A)\wedge \nabla_{B}$, for every  radical $r$ and a  subact $B$ of an $S$-act $A$. So, for every $X_{B}\in \Sigma_{r(B)}$,
there exists an $r(A)$-class $X_{A}$ such that $X_{B}\subseteq X_{A}$. Since $X_{B}$ is a subact of $A$, $X_{A}$ is a subact of $A$. So $X_{A}\in \Sigma_{r(A)}$ and, By Proposition \ref{closed2act}, we have $X_{B}\leq X_{A}\leq B$. Now since $r(X_{A})\leq r(B)\wedge \nabla_{X_{A}}$,  there exists an $r(B)$-class $C\in \Sigma_{B}$ such that $X_{A}\subseteq C$. Therefore $X_{B}=C=X_{A}$ since $\Sigma_{r(B)}$ is a set of disjoint subacts of $B$.
\end{proof}

Now since for every  Kurosh-Amitsur radical $r$ we have $r(A)=\rho_{\Sigma_{r(A)}}$, by the above proposition, one can easily see that every  Kurosh-Amitsur radical is hereditable for $r$-closed subacts. See the following corollary.

\begin{corollary}\label{k.-c.}
Let $r$ be a Kurosh-Amitsur radical and $B$ be an $r$-closed subact of an $S$-act  $A$. Then $r(B)= r(A)\wedge \nabla_{B}$.
\end{corollary}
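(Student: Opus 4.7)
The plan is to prove the equality by establishing both inclusions between the Rees congruences $r(B)$ and $r(A)\wedge \nabla_B$ separately. The inclusion $r(B) \le r(A)\wedge \nabla_B$ holds for every radical by functoriality of $r$ applied to the inclusion $B\hookrightarrow A$, and is already noted in the opening line of the proof of Proposition \ref{closed2act pre-Ku.}; so the whole content of the corollary lies in the reverse inclusion.

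For the inclusion $r(A)\wedge \nabla_B \le r(B)$, I would take $(a,a')\in r(A)\wedge \nabla_B$ with $a\neq a'$ and exhibit an $r(B)$-class containing both elements. Since $r$ is Kurosh-Amitsur, $r(A)=\rho_{\Sigma_{r(A)}}$, so there exists $X_A\in \Sigma_{r(A)}$ with $a,a'\in X_A$. Because $B$ is $r$-closed and $X_A$ meets $B$, the same computation as in the proof of Proposition \ref{closed2act}---pushing $X_A$ into $A/B$, recognising its image as included in the zero-class $[B]_{r(A/B)}$, and pulling back---forces $X_A \le c_A^r(B)=B$. The Kurosh-Amitsur hypothesis then yields $X_A\in \mathbb{R}_r$, and Remark \ref{•}(i), now applied inside $B$, produces $X_B\in \Sigma_{r(B)}$ with $X_A\le X_B$, so $(a,a')\in X_B\times X_B\subseteq r(B)$ as required.

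A slicker repackaging, closer to the hint ``by the above proposition'', is to combine Proposition \ref{closed2act pre-Ku.} (applicable since Kurosh-Amitsur radicals are pre-Kurosh, cf.\ Figure \ref{fig}) with the step $X_A\le B$ above to identify $\Sigma_{r(B)}$ with the set $\{X\in \Sigma_{r(A)}:X\le B\}$: the inclusion $\subseteq$ is Proposition \ref{closed2act pre-Ku.}, and the reverse uses Remark \ref{•}(i) together with the disjointness of distinct members of $\Sigma_{r(A)}$ to see that any such $X_A$ coincides with the $X_B\in \Sigma_{r(B)}$ containing it. Since both $r(B)$ and $r(A)\wedge \nabla_B$ are then the Rees congruences on $B$ generated by this common family, the equality is immediate.

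The only genuinely delicate step is forcing $X_A\le B$; this is where the $r$-closedness of $B$ enters (without it one would only obtain $X_A\le c_A^r(B)$, which can properly contain $B$), so this is the place where the hypothesis of the corollary is essential and where the argument must go beyond Proposition \ref{closed2act pre-Ku.} alone.
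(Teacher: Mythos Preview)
Your proposal is correct and follows essentially the same approach as the paper, which deduces the corollary from Proposition \ref{closed2act pre-Ku.} together with the Kurosh--Amitsur identity $r(A)=\rho_{\Sigma_{r(A)}}$. You have in fact been more careful than the paper's one-line remark, correctly isolating the additional step (forcing $X_A\le B$ via $r$-closedness, as in the proof of Proposition \ref{closed2act}) that is needed to pass from $\Sigma_{r(B)}\subseteq\Sigma_{r(A)}$ to the full identification $\Sigma_{r(B)}=\{X\in\Sigma_{r(A)}:X\le B\}$.
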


We denote the injective hull of an $S$-act $A$ by $E(A)$ and give the following lemma.

\begin{lemma}\label{l-r-ex}
Let $r$ be a  radical and  $\chi$ be a congruence  on an $S$-act  $B$. Then  $\rho_{B} \leq \pi^{-1}(r(E(B)/\chi))$ in which $\pi_{E(B)}: E(B)\to E(B)/\chi$ is  the canonical epimorphism if and only if there exists an extension $A$ of $B$ with  $\rho_{B} \leq \pi^{-1}(r(A/\chi))$, where  $\pi_{A}: A\to A/\chi$ is the  canonical epimorphism.
\end{lemma}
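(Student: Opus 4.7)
The plan is to prove the two implications separately. The forward direction is essentially free: if the inequality holds in $E(B)$, one simply takes $A=E(B)$ itself, since $E(B)$ is an extension of $B$ and the canonical epimorphism $\pi_{E(B)}$ is exactly $\pi_A$ in this case. So the substance of the lemma lies in the converse.

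For the converse, suppose $A\supseteq B$ satisfies $\rho_{B}\leq \pi_{A}^{-1}(r(A/\chi))$. The strategy is to transport this inequality from $A$ back to $E(B)$ by means of the injectivity of $E(B)$. First I would invoke injectivity of $E(B)$ to extend the inclusion $B\hookrightarrow E(B)$ to a homomorphism $f:A\to E(B)$ with $f|_{B}=\mathrm{id}_{B}$. Next, viewing $\chi$ as a congruence of $A$ and of $E(B)$ via the smallest-extension construction recalled in the preliminaries, I would verify that $f$ descends to a well-defined homomorphism $\bar{f}:A/\chi\to E(B)/\chi$; this uses only that $f$ is the identity on $B$, so if $(a,a')\in\chi$ on $A$ then either $a,a'\in B$ with $(a,a')\in \chi\subseteq \Con(B)$ (hence $(f(a),f(a'))=(a,a')\in\chi$ in $\Con(E(B))$), or $a=a'$ (trivial). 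Then axiom (i) of a Hoehnke radical gives $\bar{f}(r(A/\chi))\subseteq r(E(B)/\chi)$.

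To finish, take an arbitrary pair $(b_{1},b_{2})\in\rho_{B}$ in $E(B)$. If $b_{1}=b_{2}$ there is nothing to show. Otherwise $b_{1},b_{2}\in B$, and the same pair lies in $\rho_{B}$ regarded as a congruence on $A$; by hypothesis $([b_{1}]_{\chi},[b_{2}]_{\chi})\in r(A/\chi)$. Applying $\bar{f}$ and using $f|_{B}=\mathrm{id}_{B}$, we get $([b_{1}]_{\chi},[b_{2}]_{\chi})=\bigl(\bar{f}[b_{1}]_{\chi},\bar{f}[b_{2}]_{\chi}\bigr)\in r(E(B)/\chi)$, i.e.\ $(b_{1},b_{2})\in\pi_{E(B)}^{-1}(r(E(B)/\chi))$.

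The only delicate point I expect is the well-definedness of $\bar{f}$: one must be careful that the symbol $\chi$ is being interpreted consistently as its smallest extension on both $A$ and $E(B)$, and that the elements of $A\setminus B$ do not create new $\chi$-relations that $f$ could violate. The smallest-extension recipe $(a,b)\in\chi_{A}\Leftrightarrow (a,b)\in\chi_{B}\text{ or } a=b$ makes this check immediate, so no real combinatorial obstacle arises, and the proof reduces to the clean application of injectivity plus the Hoehnke radical axiom.
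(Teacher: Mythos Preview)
Your proposal is correct and follows essentially the same route as the paper: take $A=E(B)$ for the forward direction, and for the converse use injectivity of $E(B)$ to produce $f:A\to E(B)$ fixing $B$, pass to the induced map $\bar f:A/\chi\to E(B)/\chi$, and apply the Hoehnke axiom to push $\rho_B$ into $\pi_{E(B)}^{-1}(r(E(B)/\chi))$. Your treatment is in fact slightly more explicit than the paper's, since you spell out the well-definedness of $\bar f$ via the smallest-extension description of $\chi$ on $A$ and on $E(B)$, whereas the paper simply asserts $\bar f$ exists; you also invoke axiom~(i) directly for $\bar f$, while the paper factors through $r(\bar f(A/\chi))$ before reaching $r(E(B)/\chi)$, which amounts to the same thing.
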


\begin{proof}
$(\Rightarrow)$ It is enough to take $A=E(B)$.\\
$(\Leftarrow)$ Let $B\leq A$ and $\rho_{B} \leq \pi_{A}^{-1}(r(A/\chi))$. Then there exists a homomorphism $f: A\to E(B)$ which commutes the following diagram.
\[\xymatrix{
& B  \ar[r]^{\iota_{B}} \ar[rd]_{\iota_{E(A)}}& A \ar[d]^(.35){f} \\
&& E(B) }\]
 Now we consider the homomorphism $\overline{f}:A/\chi \longrightarrow  E(B)/\chi$, mapping each $[a]_{\chi}$ to $[f(a)]_{\chi}$.
Then we have
$\rho_{B/\chi}=\overline{f}(\rho_{B/\chi})\subseteq \overline{f}(r(A/\chi)\subseteq r(\overline{f}(A/\chi)\leq r(E(B)/\chi)$ and  so,  $\rho_{B}\leq\pi_{E(B)}^{-1}(\rho_{B/\chi})\leq\pi^{-1}( r(E(B)/\chi)).$
\end{proof}

\begin{theorem}\label{l-r-ext}
Given a subact $C$  of an $S$-act $B$, $B\leq c^{r}_{E(B)}(C)$  if and only if there exists an extension $A$ of $B$ with $B\leq c^{r}_{A}(C)$.
\end{theorem}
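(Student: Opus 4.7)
The plan is to derive this theorem directly from Lemma \ref{l-r-ex} by taking $\chi$ to be the Rees congruence $\rho_{C}$ generated by the subact $C$ of $B$. Since $C\leq B\leq A$, the congruence $\rho_{C}$ makes sense on $B$ (as a congruence in the sense of Lemma \ref{l-r-ex}) and extends, via the ``smallest extension'' convention described in the preliminaries, to the Rees congruence $\rho_{C}$ on $A$ (and on $E(B)$). In particular, $A/\rho_{C}=A/C$ and the canonical epimorphism $\pi_{A}\colon A\to A/C$ collapses $C$ to a single element which I will denote by $\theta_{A}$.

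The main step, and the one that requires a little care, is to translate the conclusion of the theorem into the hypothesis of Lemma \ref{l-r-ex}. By definition,
\[
c^{r}_{A}(C)=\pi_{A}^{-1}\bigl([C]_{r(A/C)}\bigr),
\]
so $B\leq c^{r}_{A}(C)$ is equivalent to $\pi_{A}(B)\subseteq [\theta_{A}]_{r(A/C)}$, that is, to the statement that every element of $\pi_{A}(B)$ is $r(A/C)$-related to $\theta_{A}$. Since $C\subseteq B$, any such containment forces the common class to be precisely $[C]_{r(A/C)}$; equivalently, every pair $(b,b')$ of elements of $B$ satisfies $(\pi_{A}(b),\pi_{A}(b'))\in r(A/C)$. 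Because the Rees congruence $\rho_{B}$ (on $A$) relates two distinct elements only when both lie in $B$, the latter condition is exactly
\[
\rho_{B}\leq \pi_{A}^{-1}\bigl(r(A/\rho_{C})\bigr).
\]
The same rewriting applies with $A$ replaced by $E(B)$.

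Once this translation is in place, the theorem is immediate: the condition $B\leq c^{r}_{E(B)}(C)$ becomes $\rho_{B}\leq \pi_{E(B)}^{-1}(r(E(B)/\rho_{C}))$, while the existence of an extension $A$ of $B$ with $B\leq c^{r}_{A}(C)$ becomes the existence of an extension $A$ with $\rho_{B}\leq \pi_{A}^{-1}(r(A/\rho_{C}))$, and these two are equivalent by Lemma \ref{l-r-ex} applied to the congruence $\chi=\rho_{C}$.

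The only genuine obstacle is the bookkeeping around the identification $A/\rho_{C}=A/C$ and the verification that ``$\pi_{A}(B)$ sits in \emph{some} single $r(A/C)$-class'' coincides with ``$\pi_{A}(B)$ sits in the specific class $[C]_{r(A/C)}$''. This follows at once from $C\subseteq B$, but it is the one place where the Rees nature of the chosen $\chi$ is essential, so it deserves an explicit sentence in the write-up.
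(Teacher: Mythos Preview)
Your proposal is correct and follows essentially the same route as the paper: both reduce the statement to Lemma \ref{l-r-ex} with $\chi=\rho_{C}$ by translating $B\leq c^{r}_{A}(C)$ into $\rho_{B}\leq \pi_{A}^{-1}(r(A/C))$ and back. Your write-up is in fact more explicit than the paper's about why the translation requires $C\subseteq B$ (so that $\pi_{A}(B)$ lying in a single $r(A/C)$-class forces that class to be $[C]_{r(A/C)}$), a point the paper passes over quickly.
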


\begin{proof}
$(\Rightarrow)$ It is clear.\\
$(\Leftarrow)$  Let $B\leq c^{r}_{A}(C)$. Then $\rho_{B}\leq \pi_{A}^{-1}(r(A/C)$, since $c^{r}_{A}(C)=\pi_{A}^{-1}([C]_{r(A/C)})$, for the canonical epimorphism $\pi_{A}:A\to A/C$. Hence  Lemma \ref{l-r-ex} implies $\rho_{B}\leq \pi_{E(B)}^{-1}(r(E(B)/C)$ where  $\pi_{E(B)}:E(B)\to E(B)/C$ is the canonical epimorphism. But $C\leq B$. So, $\pi_{E(B)}(B)\leq [C]_{r(A/C)}$, and hence $B\leq \pi^{-1}_{E(B)}([C]_{r(A/B)})=c_{E(B)}^{r}(C)$.
\end{proof}

\begin{proposition}\label{zero}
For a  radical $r$ of  {\bf S-Act}, the following conditions are equivalent.
\begin{itemize}
\item[$(a)$] The radical $r$ is  zero-hereditary.
\item[$(b)$]  A subact $ C $ of an $S$-act $B$ is $r$-dense in $B$ if and only if there exists an extension $A$ of $B$ with $B\leq c^{r}_{A}(C)$.
\end{itemize}
\end{proposition}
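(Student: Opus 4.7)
Here is my plan. The statement (b) is an ``iff'', and I first observe that one direction requires no hypothesis on $r$: if $C$ is $r$-dense in $B$, then the extension $A=B$ itself works, since $B = c^{r}_{B}(C) \leq c^{r}_{A}(C)$. So the substantive content of (b) is the converse, namely that $B\leq c^{r}_{A}(C)$ for \emph{some} extension $A\supseteq B$ should force $C$ to be $r$-dense in $B$ already. (By Theorem~\ref{l-r-ext} one could reduce to $A=E(B)$, but the argument below does not need this reduction.)

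For $(a)\Rightarrow(b)$, assume $B\leq c^{r}_{A}(C)$ and read this as $\pi(B)\subseteq [C]_{r(A/C)}$, where $\pi\colon A\to A/C$ is the canonical Rees projection. In $A/C$ the element $[C]$ is a zero, and by Remark~\ref{•}(ii) the $r(A/C)$-class $X:=[C]_{r(A/C)}$ is itself a subact of $A/C$ containing the zero $[C]$; if $X=\{[C]\}$ is trivial then $B=C$ and there is nothing to prove, so assume $X\in \Sigma_{r(A/C)}$. Then $Y:=\pi(B)=B/C$ is a subact of $X$ still containing the zero $[C]$, and zero-heredity of $r$ forces $Y\in\mathbb{R}_{r}$. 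This is precisely the assertion $r(B/C)=\nabla_{B/C}$, i.e.\ $C$ is $r$-dense in $B$.

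For $(b)\Rightarrow(a)$, the right choice is $C=\{\theta\}$. Given an $S$-act $A$ with zero $\theta$ and $Y\leq X\in\Sigma_{r(A)}$ with $\theta\in Y$, set $B:=Y$ and $C:=\{\theta\}$. Since the Rees quotient $A/\{\theta\}$ collapses a singleton, it is canonically isomorphic to $A$, and consequently the class $[\{\theta\}]_{r(A/\{\theta\})}$ is nothing but $X$; combined with $Y\leq X$ this yields $B\leq c^{r}_{A}(C)$ with $A$ itself serving as the witnessing extension. Applying (b), $\{\theta\}$ is $r$-dense in $Y$, and through the isomorphism $Y/\{\theta\}\cong Y$ this says $Y\in\mathbb{R}_{r}$, which is zero-heredity. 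The main thing to watch is the bookkeeping of zero elements in the successive Rees quotients: recognizing that $A/\{\theta\}\cong A$ is what trivializes the backward direction, and invoking Remark~\ref{•}(ii) to promote the abstract $r(A/C)$-class to a genuine subact is what lets zero-heredity engage in the forward direction.
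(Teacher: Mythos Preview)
Your proof is correct and follows essentially the same route as the paper's; in particular, the direction $(b)\Rightarrow(a)$ is identical up to relabelling. The only difference is that in $(a)\Rightarrow(b)$ the paper first invokes Theorem~\ref{l-r-ext} to pass from the given extension $A$ to $E(B)$ before applying zero-heredity, whereas you apply the argument directly to $A$---a harmless simplification, since the zero-heredity step works for any extension.
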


\begin{proof}
$(a)\Rightarrow (b)$ To prove necessity it is enough to consider $B=A$.\\
To prove sufficiency first we note that  $B\leq c^{r}_{E(B)}(C)$, by Theorem \ref{l-r-ext}. So, $\pi_{E(B)}(B)=B/C\leq [C]_{r(E(B)/C)}$ in which $\pi_{E(B)}$ is canonical homomorphism from $E(B)$ to $E(B)/C$. Also,  $[C]_{r(E(B)/C)}\in \Sigma_{r(E(B)/C)}$ and the homomorphic image of  $C$ under $\pi_{E(B)}$ is a zero element of $B/C$. Thus, by the hypothesis, we have $r(B/C)=\nabla _{B/C}$. This means that $ C $ is $r$-dense in $B$.

 \medskip
$(b)\Rightarrow (a)$ Suppose $A\in S$-$Act$, $B\in \Sigma_{r(A)}$  and  $ C\leq B $ with $\theta\in C$. Then  $\rho_{C} \leq r(A/{\theta})$. and hence  $\{\theta\}$ is $r$-dense in $C$, by Condition $(b)$. Therefore $ r(C)=\nabla_{C} $.
\end{proof}

In the following we give a definition of intersection large subacts in a more general meaning than it is in  \cite{Weinert}.

\begin{definition}
A non-trivial subact $B$  of $A$  is called \textit{intersection large } \linebreak ($\cap$-large) in $A$ if $|B\cap X|\geqslant 2$, for all non-trivial subact $X$ of $A$.
\end{definition}

\begin{lemma}\label{cap-large}
Let  $B$ be a large subact of an $S$-act $A$. Then  $B$ is $\cap$-large in $A$.
\end{lemma}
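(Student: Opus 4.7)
The plan is to argue by contradiction, using the Rees congruence generated by a hypothetical counterexample subact to manufacture a nontrivial congruence on $A$ whose restriction to $B$ is diagonal, thereby contradicting largeness.

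More precisely, suppose for a contradiction that $B$ is large in $A$ but $B$ fails to be $\cap$-large. Then by definition there exists a non-trivial subact $X \leq A$ (so $|X| \geq 2$) with $|B \cap X| \leq 1$. I would then consider the Rees congruence $\rho_X \in \mathrm{Con}(A)$ generated by the singleton system $\{X\}$, as recalled in the preliminaries. Because $|X| \geq 2$, the congruence $\rho_X$ is strictly larger than $\Delta_A$ (it identifies all of $X$ into a single class).

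The crux is to verify that the restriction $\rho_X \cap (B \times B)$ is exactly $\Delta_B$. By the defining formula of the Rees congruence, any pair $(b, b') \in \rho_X$ with $b \neq b'$ forces both $b$ and $b'$ to lie in $X$; if in addition $b, b' \in B$, then $b, b' \in B \cap X$, contradicting $|B \cap X| \leq 1$. Hence no off-diagonal pair of elements of $B$ sits in $\rho_X$, i.e.\ $\rho_X$ restricts to $\Delta_B$. Since $B$ is large in $A$, this forces $\rho_X = \Delta_A$, a contradiction with $|X| \geq 2$.

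The main (indeed only) obstacle is simply to fix the working definition of \emph{large}: in the $S$-act literature followed by the paper, $B \leq A$ is large precisely when no nontrivial congruence on $A$ collapses to the diagonal on $B$ (equivalently, the inclusion $B \hookrightarrow A$ is an essential monomorphism). Once this is taken as the definition, the argument above is essentially a definition chase, and no further technical machinery is required.
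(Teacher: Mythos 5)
The paper states this lemma without any proof, so there is no argument of the authors' to compare against; your proof is correct and is clearly the intended one: the Rees congruence $\rho_X$ of a non-trivial subact $X$ with $|B\cap X|\le 1$ is strictly larger than $\Delta_A$ yet restricts to $\Delta_B$, contradicting the largeness of $B$ (i.e.\ the essentiality of the inclusion $B\hookrightarrow A$, which is exactly the congruence-theoretic characterization you invoke and which matches the paper's Corollary on essential monomorphisms and Rees congruences). The only pedantic gap is that the paper's definition of $\cap$-large also requires $B$ itself to be non-trivial, which your argument does not address; but a trivial large subact forces $A$ to be trivial, so this degenerate case is harmless.
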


\begin{theorem}
Let $r$ be a pre-hereditary  radical, $A\in \mathbb{S}_{r}$ and  $B$  be an $r$-dense subact of $A$. Then $B$ is $\cap$-large in $A$.
\end{theorem}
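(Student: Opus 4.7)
The plan is to proceed by contradiction: suppose some non-trivial subact $X\leq A$ satisfies $|B\cap X|\leq 1$. I will show that $X$ must simultaneously lie in the radical class $\mathbb{R}_r$ and the semisimple class $\mathbb{S}_r$, which then forces $|X|\leq 1$ and contradicts the non-triviality of $X$. Note that we may assume $B\neq A$, since otherwise $B\cap X=X$ and $|B\cap X|\geq 2$ follows at once from non-triviality of $X$.

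The first and only non-routine step is to observe that the canonical Rees epimorphism $\pi:A\to A/B$ restricts to an injection on $X$. Indeed, if $\pi(x)=\pi(x')$ for $x,x'\in X$, then $(x,x')\in\rho_B$, so either $x=x'$ or $\{x,x'\}\subseteq B$; in the second case both $x$ and $x'$ lie in $B\cap X$, and the hypothesis $|B\cap X|\leq 1$ again gives $x=x'$. Hence $\pi(X)\cong X$ is a non-trivial subact of $A/B$.

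Next I translate the two hypotheses on $r$. Since $B$ is $r$-dense, $c^r_A(B)=\pi^{-1}([B]_{r(A/B)})=A$, which is equivalent to $r(A/B)=\nabla_{A/B}$, i.e.\ $A/B\in\mathbb{R}_r$. As $A/B$ is non-trivial, it is the unique element of $\Sigma_{r(A/B)}$. Pre-heredity of $r$ applied to the subact $\pi(X)\leq A/B\in\Sigma_{r(A/B)}$ now yields $\pi(X)\in\mathbb{R}_r$, whence $X\in\mathbb{R}_r$ by transport along the isomorphism. On the other hand $A\in\mathbb{S}_r$ and $\mathbb{S}_r$ is closed under subacts, so $X\in\mathbb{S}_r$. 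Combining, $\nabla_X=r(X)=\Delta_X$, forcing $|X|\leq 1$ and producing the desired contradiction.

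Once the injectivity of $\pi|_X$ is in place, the role of each hypothesis is transparent: $r$-denseness collapses $A/B$ to a single $r$-class; pre-heredity pushes the radical property down from $A/B$ to $\pi(X)$; and semisimplicity of $A$ pulls semisimplicity down to $X$. I therefore do not anticipate any genuine obstacle beyond the small bookkeeping in the injectivity argument.
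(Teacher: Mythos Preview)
Your proof is correct and follows essentially the same route as the paper's: both identify the image of $X$ in $A/B$ as a subact of the radical $S$-act $A/B$, invoke pre-heredity to place that image in $\mathbb{R}_r$, and then use $A\in\mathbb{S}_r$ to force a triviality contradiction. The only cosmetic difference is that the paper works with the quotient $X/(B\cap X)\leq A/B$ and argues directly that $\rho_{B\cap X}\neq\Delta_X$, whereas you assume $|B\cap X|\leq 1$ and verify injectivity of $\pi|_X$ by hand; your formulation has the mild advantage of handling the case $B\cap X=\emptyset$ without a separate convention.
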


\begin{proof}
Let $X$ be a non-trivial subact of $A$. We have to show that $ |B\cap X|\geq 2$. But since $X/(B\cap X)\leq A/B\in \mathbb{R}_{r}$,  $X/(B\cap X)\in \mathbb{R}_{r}$ follows from being  pre-hereditary of $r$.  Therefore $X/(B\cap X)\ncong X$ because otherwise $X\in \mathbb{R}_{r}\cap \mathbb{S}_{r}$ which implies that $X$ is trivial $S$-act which is a contradiction. So $\rho_{B\cap X}\neq \Delta_{X}$ which means $|B\cap X|\geqslant 2$ and we are done.
\end{proof}

\begin{proposition}\label{close coproduct zero-her.}
Let $r$ be a  zero-hereditary  radical whose semisimple class is closed under coproducts  and $B$ be an $r$-dense subact of a semisimple $S$-act $A$. Then $B$ is $\cap$-large in $A$.
\end{proposition}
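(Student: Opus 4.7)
The plan is to mirror the strategy of the preceding theorem, replacing pre-hereditariness with zero-hereditariness coupled with closure of $\mathbb{S}_r$ under coproducts. Let $X$ be a non-trivial subact of $A$, and argue by contradiction, assuming $|B\cap X|\leq 1$.

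First, I would show that the subact $C:=(X\cup B)/B$ of $A/B$ lies in $\mathbb{R}_r$. Because $B$ is $r$-dense in $A$, we have $A/B\in\mathbb{R}_r$. If $B=A$ then $X\subseteq B$ forces $|B\cap X|=|X|\geq 2$, contradicting the standing assumption; so $A/B$ is non-trivial and $\Sigma_{r(A/B)}=\{A/B\}$. The subact $C$ contains the zero element $[B]$ of $A/B$, so zero-hereditariness of $r$ yields $C\in\mathbb{R}_r$.

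Next, I would split on the size of $B\cap X$. If $B\cap X=\emptyset$, then for every $x\in X$ and $s\in S$ one has $sx\in X\setminus B$ (since $sx\in X$, and $sx\in B$ would put $sx$ in the empty intersection), hence $C\cong X\amalg\Theta$ for a singleton $\Theta=\{[B]\}$. Since $X\in\mathbb{S}_r$ (subact of the semisimple $A$), $\Theta\in\mathbb{S}_r$, and $\mathbb{S}_r$ is closed under coproducts, we get $X\amalg\Theta\in\mathbb{S}_r\cap\mathbb{R}_r$, hence trivial, contradicting $|X\amalg\Theta|\geq 3$. If $|B\cap X|=1$, write $B\cap X=\{y\}$; then $Sy\subseteq B\cap X=\{y\}$ forces $y$ to be a fixed point of $X$, and the assignment $[B]\mapsto y$ together with $[x]\mapsto x$ for $x\in X\setminus\{y\}$ defines an $S$-isomorphism $C\cong X$. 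Hence $X\in\mathbb{R}_r\cap\mathbb{S}_r$ is trivial, contradicting $|X|\geq 2$.

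The main obstacle I expect is the bookkeeping around the $S$-action on $C=(X\cup B)/B$ in each case: verifying the coproduct structure when $B\cap X=\emptyset$, and the isomorphism with $X$ when $B\cap X$ is a singleton fixed point. Both are elementary but require checking that the $S$-action on the class $[B]$ matches the candidate isomorphism; once this is in hand, the remaining steps are straightforward consequences of the hypotheses combined with the general fact that $\mathbb{R}_r\cap\mathbb{S}_r$ consists solely of trivial $S$-acts.
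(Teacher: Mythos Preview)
Your proof is correct and follows essentially the same strategy as the paper: show that $(X\cup B)/B$ lies in both $\mathbb{R}_r$ (via zero-hereditariness) and $\mathbb{S}_r$ (via closure of $\mathbb{S}_r$ under coproducts in the empty-intersection case, and via an explicit isomorphism with $X$ in the singleton-intersection case), forcing it to be trivial. The only cosmetic differences are that the paper reduces to cyclic subacts $Sx$ for $x\in A\setminus B$ rather than working with an arbitrary non-trivial $X$, and it obtains $(Sx\cup B)/B\in\mathbb{R}_r$ by invoking Proposition~\ref{zero} (the denseness characterization of zero-hereditary radicals) instead of appealing to the definition of zero-hereditariness directly as you do; both routes amount to the same observation.
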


\begin{proof}
To prove, we show that, for every $x\in A\setminus B$, $|B\cap Sx|\geqslant 2$.  To do so, we  suppose there exists $x\in A\setminus B$ such that   $|B\cap Sx|\lneq 2$ and we get a contradiction. So let  $|B\cap Sx|\lneq 2$, then two possible cases may occur;

 {\rm (i)}  $|B\cap Sx|=0$ and  ${\rm(ii)}~|B\cap Sx|=1$.
In both cases  $(Sx\cup B)/B\in \mathbb{S}_{r}$, because, in case (i),   $(Sx\cup B)/B \cong Sx\amalg\Theta$ in which $\Theta$ is a singleton trivial $S$-act. Hence $(Sx\cup B)/B\in \mathbb{S}_{r}$, follows from the closedness of $\mathbb{S}_{r}$  under coproducts  and this fact that  $Sx, \Theta\in\mathbb{S}_{r}$. Also, in case (ii), we have $(Sx\cup B)/B \cong Sx\in \mathbb{S}_{r}$. Also since  $B\leq Sx\cup B\leq A$ and $B$ is $r$-dense in $A$. Proposition \ref{zero} implies that $(Sx\cup B)/B\in \mathbb{R}_{r}$. So $(Sx\cup B)/B\in \mathbb{S}_{r}\cap \mathbb{R}_{r}$ which means $(Sx\cup B)/B$ is a trivial $S$-act and so $(Sx\cap B)/B=B$. Therefore $Sx\leq B$ which contradicts $x\in A\setminus B$.
\end{proof}

\section{Banaschewski's condition on $r$-monomorphisms}

Because of the crucial role of Banaschewski's condition in the study of the well-behaviour of injectivy, we dedicate this short section to verify this condition concerning $r$-monomorphisms. To do so, we use the notion of essential congruence as introduced in  \cite{Wiegandt (2006)}. this notion is tightly related to the notion of essential monomorphisms which is important to study injective hull, see for example \cite{Ebrahimi (2009),Ebrahimi (2014)}. Now let us give the definition of essential congruence in \textbf{S-Act}.

\begin{definition}
A congruence $\chi$ on an $S$-act $A$ is said to be essential if
$\chi\wedge \theta\neq \Delta_{A}$, for every congruence $\theta\neq \Delta_{A}$ on $A$.
\end{definition}

\begin{definition}
A family $\{A_{i}\}_{i\in I}$ of subacts of an $S$-act $A$ is called \textit{collectively large} in $A$  if any homomorphism $g:A\rightarrow C$ whose restriction to $A_{i}$'s is a monomorphism, is itself a monomorphism.
\end{definition}

In the following we give the relation between two former defined notion.

\begin{theorem}
A family $\Sigma=\{A_{i}\}_{i\in I}$ of disjoint  subacts of an $S$-act $A$ is collectively large in $A$ if and only if the Rees congruence $\rho_{\Sigma}$ is an essential congruence  on $A$.
\end{theorem}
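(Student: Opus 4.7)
The plan is a direct equivalence argument using the kernel congruence of a homomorphism, via the standard ``kernel pair'' description of when a map is a monomorphism: a homomorphism $g:A\to C$ of $S$-acts fails to be a monomorphism exactly when $\ker g\neq \Delta_A$, and its restriction $g|_B$ fails to be a monomorphism exactly when $\ker g\wedge \nabla_B\neq \Delta_B$. With this in hand, both implications will reduce to unwrapping the definition of $\rho_\Sigma$.

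For the forward direction, I would assume $\Sigma$ is collectively large and take an arbitrary congruence $\theta\neq \Delta_A$. The canonical epimorphism $\pi:A\to A/\theta$ has $\ker \pi=\theta\neq \Delta_A$, so $\pi$ is not a monomorphism; by the collectively-large hypothesis, there must exist some $A_i$ with $\pi|_{A_i}$ not a monomorphism, i.e.\ distinct elements $a,a'\in A_i$ with $(a,a')\in\theta$. Since $a,a'$ lie in the same block of $\Sigma$ we also have $(a,a')\in \rho_\Sigma$, whence $\rho_\Sigma\wedge\theta\neq \Delta_A$, proving essentiality.

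For the converse, I would assume $\rho_\Sigma$ is essential and consider a homomorphism $g:A\to C$ whose restriction to each $A_i$ is a monomorphism. Set $\theta=\ker g$. If $\theta\neq\Delta_A$, essentiality yields distinct $a,b\in A$ with $(a,b)\in \rho_\Sigma\wedge\ker g$; by the description of $\rho_\Sigma$ the relation $(a,b)\in \rho_\Sigma$ with $a\neq b$ forces $a,b\in A_i$ for some $i$, and $(a,b)\in \ker g$ then contradicts the assumption that $g|_{A_i}$ is injective. Hence $\ker g=\Delta_A$ and $g$ is a monomorphism.

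No genuine obstacle arises here; the only point requiring mild care is the observation that in \textbf{S-Act} a homomorphism is a monomorphism precisely when its kernel congruence is $\Delta$, which lets one translate ``essential congruence'' into ``essential monomorphism in disguise''. Given the setup already developed in the paper (Rees congruence generated by a system of disjoint subacts, and Remark on $\Sigma_{r(A)}$), both directions are essentially definition-chasing and the proof should be short.
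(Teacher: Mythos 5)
Your proposal is correct and is essentially the paper's own argument: both directions hinge on the observation that for distinct $a,a'$ one has $(a,a')\in\rho_\Sigma$ exactly when $a,a'$ lie in a common $A_i$, which identifies $\rho_\Sigma\wedge\ker g=\Delta_A$ with injectivity of all the restrictions $g|_{A_i}$. The paper phrases the forward direction contrapositively (assuming $\rho_\Sigma\wedge\chi=\Delta_A$ and concluding $\chi=\Delta_A$ via the canonical epimorphism $A\to A/\chi$), but this is the same definition-chasing you carry out.
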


\begin{proof}
($ \Rightarrow $) Suppose that $\Sigma=\{A_{i}\}_{i\in I}$ is collectively large in $A$ and $\rho_{\Sigma}\wedge \chi= \Delta_A$, for some $\chi\in {\Con}(A)$. Then $\pi|_{A_{i}}:A_{i}\to A/\chi$ is a monomorphism, for every $i\in I$ where $\pi: A\to A/\chi$ is  the  canonical epimorphism. Hence  $\pi: A\to A/\chi$ is  a monomorphism and so $\chi=\Delta_{A}$.

 \medskip
($\Leftarrow$) Let $\Sigma=\{A_{i}\}_{i\in I}$ be a family  of disjoint  subacts of $S$-act $A$ such that $\rho_{\Sigma}$ is an essential congruence on $A$ and also let $g : A \to C$ be a homomorphism such that $g|_{A_{i}}$ is a monomorphism, for every $i\in I$. Then ${\ker} (g)\wedge \rho_{\Sigma}=\Delta_A$. So ${\ker} (g)=\Delta_A$ follows from essentiality of $\rho_{\Sigma}$. That is  $\{A_{i}\}_{i\in I}$ is collectively large in $A$.
\end{proof}

\begin{corollary}
A homomorphism $f : A \rightarrow B$ is an essential monomorphism if and only if the generated Rees congruence by $f(A)$, that is $\rho_{f(A)}$, is essential on $B$.
\end{corollary}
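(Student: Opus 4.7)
The plan is to apply the immediately preceding theorem to the singleton family $\Sigma=\{f(A)\}$. First I would unpack what essentiality of a monomorphism means in this setting: a monomorphism $f:A\to B$ is essential precisely when, for every homomorphism $g:B\to C$, the composite $gf$ being a monomorphism forces $g$ to be a monomorphism. Because $f$ is itself a monomorphism, identifying $A$ with the subact $f(A)\leq B$ is harmless, and under this identification $gf$ is a monomorphism if and only if the restriction $g|_{f(A)}$ is a monomorphism.

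Next I would observe that a singleton family $\{f(A)\}$ is automatically a family of pairwise disjoint subacts of $B$, so the hypotheses of the previous theorem are satisfied with $I=\{*\}$ and $A_{*}=f(A)$. The definition of collectively large then reads exactly: every $g:B\to C$ whose restriction to $f(A)$ is a monomorphism is itself a monomorphism. Comparing with the reformulation in the first step, this is exactly the statement that $f$ is an essential monomorphism.

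Finally, I would invoke the previous theorem to translate collective largeness of $\{f(A)\}$ in $B$ into essentiality of the generated Rees congruence $\rho_{\{f(A)\}}=\rho_{f(A)}$ on $B$, which yields the desired equivalence. There is no genuine obstacle here; the only care required is the routine but necessary verification that, under the identification of $A$ with $f(A)$, the essentiality condition on $f$ coincides literally with the collective-largeness condition for the singleton $\{f(A)\}$, so that the previous theorem applies verbatim.
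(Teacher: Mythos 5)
Your proposal is correct and is exactly the argument the paper intends: the corollary is stated without proof precisely because it is the specialization of the preceding theorem to the singleton family $\Sigma=\{f(A)\}$, with the observation that $gf$ is a monomorphism if and only if $g|_{f(A)}$ is. Your careful identification of essentiality of $f$ with collective largeness of $\{f(A)\}$ is the whole content, so nothing is missing.
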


\begin{theorem}\label{essential congruence on A/kappa}
Let $A$ be an $S$-act, $\chi\in {\Con}(A)$ and $\kappa$ be a maximal congruence with $\chi\wedge \kappa = \Delta_{A}$. Then, $(\kappa \vee \chi ) / \kappa$ is an essential congruence on
$A/\kappa$.
\end{theorem}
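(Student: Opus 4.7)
The plan is to verify essentiality directly from the definition: given any congruence $\theta\neq\Delta_{A/\kappa}$ on $A/\kappa$, exhibit a nondiagonal pair in $\bigl((\kappa\vee\chi)/\kappa\bigr)\wedge\theta$. The key tool is the correspondence theorem for congruences: every congruence on $A/\kappa$ is of the form $\eta/\kappa$ for a unique congruence $\eta\in\Con(A)$ with $\kappa\leq\eta$, and the lattice operations are preserved, so in particular
\[
\bigl((\kappa\vee\chi)/\kappa\bigr)\wedge(\eta/\kappa)=\bigl((\kappa\vee\chi)\wedge\eta\bigr)/\kappa.
\]
Hence $(\kappa\vee\chi)/\kappa$ is essential on $A/\kappa$ exactly when $(\kappa\vee\chi)\wedge\eta\gneq\kappa$ for every $\eta\in\Con(A)$ with $\kappa\lneq\eta$.

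First I would pick such an $\eta$. By the maximality of $\kappa$ among congruences satisfying $\chi\wedge\kappa=\Delta_A$, strictly enlarging $\kappa$ to $\eta$ must break the property, so $\chi\wedge\eta\neq\Delta_A$. Choose a pair $(a,b)\in\chi\wedge\eta$ with $a\neq b$.

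Next I would check that this pair witnesses the required strict inclusion. Since $(a,b)\in\chi\subseteq\kappa\vee\chi$ and $(a,b)\in\eta$, we have $(a,b)\in(\kappa\vee\chi)\wedge\eta$. On the other hand $(a,b)\in\chi$ with $a\neq b$ forces $(a,b)\notin\kappa$, for otherwise $(a,b)\in\chi\wedge\kappa=\Delta_A$, contradicting $a\neq b$. Therefore $(\kappa\vee\chi)\wedge\eta$ properly contains $\kappa$, and passing to the quotient shows that $\bigl((\kappa\vee\chi)/\kappa\bigr)\wedge\theta\neq\Delta_{A/\kappa}$, establishing essentiality.

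There is no real obstacle here; the only subtlety is making sure the lattice operations do pass to the quotient (which needs $\kappa\leq\kappa\vee\chi$ and $\kappa\leq\eta$, both automatic) and invoking the maximality of $\kappa$ correctly. The whole proof is essentially a bookkeeping argument about the congruence lattice once the correspondence theorem is in hand.
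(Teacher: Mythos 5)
Your proposal is correct and follows essentially the same route as the paper: both invoke the correspondence theorem to write an arbitrary nontrivial congruence on $A/\kappa$ as $\eta/\kappa$ with $\kappa\lneq\eta$, use the maximality of $\kappa$ to obtain a pair $(a,b)\in\chi\wedge\eta$ with $a\neq b$, and observe that $(a,b)\notin\kappa$ so that $([a]_\kappa,[b]_\kappa)$ is a nondiagonal witness in $\bigl((\kappa\vee\chi)/\kappa\bigr)\wedge(\eta/\kappa)$. No gaps; the argument matches the paper's proof step for step.
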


\begin{proof}
\sloppypar First we claim that $\tau\wedge \chi\neq\Delta_A$, for every congruence $\tau/\kappa\in~ { \Con}(A/\kappa)$. This follows from the maximality of $\kappa$ with respect to $\chi\wedge\kappa=\Delta_A$, and the fact that every congruence on $A/\kappa$ is in the form of $\tau/\kappa$ in which $\tau\in {\Con}(A)$ contains $\kappa$. Therefore, for every $\tau/\kappa\in {\Con}(A/\kappa)$, there exist $x\neq y$ in $A$ such that $(x,y)\in \chi\wedge \tau$. But since $(x,y)\in \chi$ and $\chi\wedge\kappa=\Delta_A$, we have $(x,y)\notin \kappa$. So $[x]_{\kappa}\neq [y]_{\kappa}$ and $([x]_{\kappa},[y]_{\kappa})\in \tau/\kappa \wedge (\chi\vee\kappa)/\kappa$. Hence we have $(\chi\vee\kappa)/\kappa\wedge\tau/\kappa\neq\Delta_{A/\kappa}$, for every $\tau/\kappa\neq \Delta_{A/\kappa}$ in ${ \Con}(A/\kappa)$. This means that $(\chi\vee\kappa)/\kappa$ is an essential congruence on $A/\kappa$.
\end{proof}

\begin{lemma}\label{essential subact}
Let $A$ be an $S$-act,  $\chi \in {\rm \Con}(A)$  and $\kappa$ be a maximal congruence with the property  $\chi \wedge \kappa = \Delta_{A}$.  Then $ \pi|_{B}   : B \rightarrow B/\kappa|_{B}$, mapping  each $b \in B$ to $[b]_{\kappa}$,  is an isomorphism,  for every $B \in\Sigma_{\chi}$.
\end{lemma}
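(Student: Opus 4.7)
The plan is to show that the restriction $\kappa|_B := \kappa \cap (B \times B)$ of $\kappa$ to $B$ collapses to the diagonal $\Delta_B$. Once this is established, the map $b \mapsto [b]_\kappa$ is automatically well-defined, a homomorphism (being the restriction of the canonical epimorphism $\pi\colon A \to A/\kappa$ to the subact $B$), and surjective onto $B/\kappa|_B$ (the latter identified with $\pi(B)$ inside $A/\kappa$); injectivity is exactly the statement $\kappa|_B = \Delta_B$. So the entire lemma reduces to a single set-theoretic observation.

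First I would invoke the definition of $\Sigma_\chi$: by assumption $B$ is a (non-trivial) $\chi$-class which happens to form a subact of $A$. Because all elements of $B$ lie in a single $\chi$-class, we have $B \times B \subseteq \chi$. Then for any pair $(b_1,b_2) \in \kappa|_B$ we get
\[
(b_1,b_2) \in \kappa \cap (B \times B) \subseteq \kappa \wedge \chi = \Delta_A,
\]
so $b_1 = b_2$. This shows $\kappa|_B \subseteq \Delta_B$, and together with reflexivity we obtain $\kappa|_B = \Delta_B$. Notice that the maximality of $\kappa$ plays no role here; only the hypothesis $\chi \wedge \kappa = \Delta_A$ is used (the maximality is needed in the previous theorem, not in this one).

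From $\kappa|_B = \Delta_B$ the quotient $B/\kappa|_B$ coincides with $B/\Delta_B$, hence the canonical map $b \mapsto [b]_{\kappa|_B}$ is a bijective $S$-homomorphism. Since $[b]_{\kappa|_B}$ can be identified with $[b]_\kappa$ (because $\kappa|_B$ and $\kappa$ agree on elements of $B$), the map described in the statement is precisely this isomorphism, and the proof is complete.

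There is really no main obstacle in this lemma; the only thing to watch is the interpretation of the codomain $B/\kappa|_B$ and the slight abuse of notation in writing $[b]_\kappa$ for the class in the quotient of $B$ (rather than of $A$). Once that identification is made explicit, the argument is a two-line application of $B \times B \subseteq \chi$ together with $\chi \wedge \kappa = \Delta_A$.
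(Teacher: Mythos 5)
Your argument is correct and is essentially the paper's own proof: both reduce the lemma to injectivity of $\pi|_{B}$ and derive it from $B\times B\subseteq\chi$ together with $\chi\wedge\kappa=\Delta_{A}$ (you phrase it as $\kappa\cap(B\times B)=\Delta_{B}$, the paper as the contrapositive $b\neq b'\Rightarrow(b,b')\notin\kappa$). Your added observation that maximality of $\kappa$ is not used here is also accurate.
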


\begin{proof}
To prove it is enough to show that the map $\pi|_B$ is injective. Indeed, if $b\neq b'$ in $B$ then $(b,b')\in \chi$ and hence $(b,b')\notin \kappa$ follows from $\chi\wedge\kappa=\Delta_A$. That is $[b]_{\kappa}\neq [b']_{\kappa}$, for every $b\neq b'$ in $B$.
\end{proof}

\begin{lemma}\label{rho vee kappa/kappa}
 Let $A$ be an $S$-act,  $\rho_{B}$  be the Rees congruence  on the subact $B$ of $A$, and $\kappa_{B}$   be a maximal  congruence  with the property  $\rho_{B}  \wedge \kappa_{B}  = \Delta_{A}$. Then  $\pi(\rho_{B} )  = (\rho_{B}  \vee \kappa_{B} )/\kappa_{B}$,  in  which $\pi  : A  \rightarrow  A/\kappa_{B}$  is  the  canonical homomorphism.
\end{lemma}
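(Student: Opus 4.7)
The plan is to establish the equality by unpacking both sides and reducing the nontrivial inclusion to a chain induction that exploits the Rees shape of $\rho_B$. First, since $\rho_B = \Delta_A \cup (B\times B)$, applying $\pi\times\pi$ yields
\[
\pi(\rho_B) \;=\; \Delta_{A/\kappa_B}\,\cup\,(\pi(B)\times\pi(B)),
\]
which is itself a Rees-type congruence on $A/\kappa_B$, namely the one generated by the subact $\pi(B)$. On the other hand, $(\rho_B\vee\kappa_B)/\kappa_B$ consists of those pairs $([a]_{\kappa_B},[a']_{\kappa_B})$ with $(a,a')\in\rho_B\vee\kappa_B$. So the task reduces to proving, for all $a,a'\in A$,
\[
(a,a')\in\rho_B\vee\kappa_B \;\iff\; [a]_{\kappa_B}=[a']_{\kappa_B} \text{ or } [a]_{\kappa_B},[a']_{\kappa_B}\in\pi(B).
\]

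The ``$\Leftarrow$'' direction is immediate: if the $\kappa_B$-classes coincide, then $(a,a')\in\kappa_B\subseteq\rho_B\vee\kappa_B$; if they both lie in $\pi(B)$, pick $b,b'\in B$ with $(a,b),(a',b')\in\kappa_B$ and chain $a\sim_{\kappa_B}b\sim_{\rho_B}b'\sim_{\kappa_B}a'$. This yields the inclusion $\pi(\rho_B)\subseteq(\rho_B\vee\kappa_B)/\kappa_B$, which is anyway automatic from $\rho_B\subseteq\rho_B\vee\kappa_B$.

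For the harder ``$\Rightarrow$'' direction, I would use the fact that the join in the congruence lattice of an $S$-act is the transitive closure of the union, so any pair $(a,a')\in\rho_B\vee\kappa_B$ is witnessed by a finite chain $a=a_0,a_1,\ldots,a_n=a'$ with every consecutive pair in $\rho_B\cup\kappa_B$. Induct on $n$. A single $\kappa_B$-step gives $[a]_{\kappa_B}=[a']_{\kappa_B}$, while a single $\rho_B$-step either forces $a_0=a_1$ (same conclusion) or places both endpoints in $B$, so both classes lie in $\pi(B)$. For the inductive step, apply the hypothesis to the chain $a_1,\ldots,a_n$ and glue with the first link $(a_0,a_1)$: a $\kappa_B$-link merely identifies $[a_0]_{\kappa_B}$ with $[a_1]_{\kappa_B}$, while a $\rho_B$-link either collapses or deposits both endpoints in $B$, so in every subcase the dichotomy persists. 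The only real obstacle is organising this case analysis cleanly; notably no property of the maximality of $\kappa_B$ is invoked, only that $\rho_B$ is Rees.
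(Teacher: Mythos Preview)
Your proposal is correct, and it is close in spirit to the paper's proof: both hinge on the Rees shape of $\rho_B$, and both observe (you explicitly, the paper implicitly) that the maximality of $\kappa_B$ plays no role. The organisation, however, differs. The paper proves directly that $\pi(\rho_B)$ is a congruence on $A/\kappa_B$ by checking transitivity through a three-case analysis on pairs in $\rho_B$, and then appeals to the Correspondence Theorem to identify this congruence with $(\rho_B\vee\kappa_B)/\kappa_B$. Your opening observation, that $\pi(\rho_B)=\Delta_{A/\kappa_B}\cup(\pi(B)\times\pi(B))=\rho_{\pi(B)}$, is actually slicker than the paper's transitivity check: it shows at once that $\pi(\rho_B)$ is a (Rees) congruence. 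On the other hand, your chain induction for the inclusion $(\rho_B\vee\kappa_B)/\kappa_B\subseteq\pi(\rho_B)$ is more laborious than necessary; once you know $\pi(\rho_B)$ is a congruence, its $\pi$-preimage is a congruence on $A$ containing both $\rho_B$ and $\kappa_B$, hence containing $\rho_B\vee\kappa_B$, and the inclusion follows immediately---this is exactly the Correspondence Theorem shortcut the paper uses. So your argument front-loads the easy part and then works harder than needed at the end, while the paper does the opposite.
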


\begin{proof}
\sloppypar To prove we show that $\pi(\rho_B)=\{([a]_{\kappa_B},[b]_{\kappa_B})\, |\, (a,b)\in \rho_B\}$ is a congruence on $A/\kappa_B$, for every subact $B$ of $A$, then Correspondence Theorem gives the result. The reflexive and symmetric properties of $\pi(\rho_B)$ easily follows from being epimorphism of $\pi$ and  the reflexive and symmetric properties of $\rho_B$. To check the transitive property suppose that $([a]_{\kappa_B},[b]_{\kappa_B}), ([c]_{\kappa_B},[d]_{\kappa_B})\in ~\pi(\rho_B)$ and $[b]_{\kappa_B}=[c]_{\kappa_B}$. Then we have the following possible cases:
\begin{enumerate}
    \item $c=d$, then $([a]_{\kappa_B},[d]_{\kappa_B})=([a]_{\kappa_B},[c]_{\kappa_B})=([a]_{\kappa_B},[b]_{\kappa_B})\in \pi(\rho_B)$;

    \item $a=b$, then $([a]_{\kappa_B},[d]_{\kappa_B}=([b]_{\kappa_B},[d]_{\kappa_B}=([c]_{\kappa_B},[b]_{\kappa_B})\in \pi(\rho_B)$;

    \item $a\neq b$ and $c\neq d$, then $a,b,c,d\in B$, since $(a,b),(c,d)\in \rho_B$. So $([a]_{\kappa_B},[d]_{\kappa_B})\in \pi(\rho_B)$.
\end{enumerate}
The compatibility of $\pi(\rho_{B})$ with the action is obvious. Hence $\pi(\rho_B)$ is a congruence on $A/\kappa$ and we are done.
\end{proof}

Now we give the Banaschewski's condition for $r$-monomorphisms, but first let us note the following definition.

\begin{definition}\label{d-r-ess}
A subact $B$ of an $S$-act $A$ is called to be \textit{$r$-large} if $B$  is both large and $r$-dense in $A$. Then we call $A$ to be \textit{$r$-essential extension} of $B$.

 Also, an $r$-monomorphism $\iota: B \rightarrow A$ is called \textit{$r$-essential monomorphism} if $\iota(B)$ in $A$  is $r$-large.
\end{definition}

\begin{theorem}[Banaschewski's $r$-condition]\label{b3}
 Given an $r$-monomorphism $f:B\to A$, there exists a homomorphism $g: A\to X$ such that  $g\circ f: B \to X$ is an $r$-essential monomorphism.
\end{theorem}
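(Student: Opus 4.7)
The plan is to imitate the classical Banaschewski construction. By Zorn's Lemma, the set of congruences on $A$ disjoint from $\rho_{f(B)}$ (i.e., meeting it in $\Delta_A$) is non-empty and closed under directed unions, so one may choose a congruence $\kappa$ on $A$ that is maximal with the property $\kappa\wedge\rho_{f(B)}=\Delta_A$. Take $X:=A/\kappa$ and let $g:=\pi_\kappa:A\to A/\kappa$ be the canonical epimorphism; I will show that $g\circ f$ is an $r$-essential monomorphism.

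Injectivity and largeness follow from the material of the previous section. Applying Lemma \ref{essential subact} with $\chi=\rho_{f(B)}$ (and $B=f(B)$) shows that $\pi_\kappa$ restricted to the single $\rho_{f(B)}$-class $f(B)$ is an isomorphism onto its image, so $g\circ f$ is a monomorphism. By Theorem \ref{essential congruence on A/kappa}, $(\kappa\vee\rho_{f(B)})/\kappa$ is an essential congruence on $A/\kappa$, and Lemma \ref{rho vee kappa/kappa} identifies it with the pushforward $\pi_\kappa(\rho_{f(B)})$. A routine set-theoretic check, using only that every element of the Rees congruence is either a diagonal pair or a pair inside $f(B)$, then gives
\[
\pi_\kappa(\rho_{f(B)})=\rho_{\pi_\kappa(f(B))}=\rho_{gf(B)}.
\]
So $\rho_{gf(B)}$ is essential on $A/\kappa$, and the Corollary characterizing essential monomorphisms by their Rees congruences yields that $gf(B)$ is large in $A/\kappa$.

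For the $r$-density half, note that since $f$ is an $r$-monomorphism, $A/\rho_{f(B)}\in\mathbb{R}_{r}$. As $\kappa\vee\rho_{f(B)}\supseteq\rho_{f(B)}$, the canonical quotient gives a surjection
\[
A/\rho_{f(B)}\twoheadrightarrow A/(\kappa\vee\rho_{f(B)})\cong (A/\kappa)/\rho_{\pi_\kappa(f(B))}=(A/\kappa)/gf(B).
\]
Axiom (i) in the definition of a Hoehnke radical implies that $\mathbb{R}_r$ is closed under surjective homomorphic images (if $r(C)=\nabla_C$ and $h:C\twoheadrightarrow D$ is onto, then every pair in $\nabla_D$ lifts to a pair in $\nabla_C=r(C)$, whose image lies in $r(D)$). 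Hence $(A/\kappa)/gf(B)\in\mathbb{R}_r$, so $gf(B)$ is $r$-dense in $A/\kappa$, and combining with the previous paragraph, $gf$ is an $r$-essential monomorphism as required.

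The only real work sits in the middle step, where one must line up three different descriptions of the same congruence on $A/\kappa$: the pushforward $\pi_\kappa(\rho_{f(B)})$, the quotient $(\kappa\vee\rho_{f(B)})/\kappa$, and the Rees congruence $\rho_{gf(B)}$. Lemma \ref{rho vee kappa/kappa} handles the first identification, and the second is a short diagram chase; once they are in hand, essentiality comes from Theorem \ref{essential congruence on A/kappa} and $r$-density from axiom (i), with no further subtlety.
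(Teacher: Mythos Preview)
Your proof is correct and follows essentially the same route as the paper: choose $\kappa$ maximal with $\kappa\wedge\rho_{f(B)}=\Delta_A$, invoke Theorem~\ref{essential congruence on A/kappa} and Lemma~\ref{rho vee kappa/kappa} for largeness, and then check $r$-density. The only cosmetic differences are that the paper cites an external lemma (Wiegandt) for the existence of $\kappa$ where you run Zorn directly, and for $r$-density the paper appeals to Lemma~\ref{hom-dens} while you unwind the same fact by hand via closure of $\mathbb{R}_r$ under homomorphic images.
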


\begin{proof}
To prove, it is enough to show that  there exists a congruence $\kappa$ on the $S$-act $A$  such that  $\pi(f(B))$ is $r$-large in $A/\kappa$, for  the canonical homomorphism $\pi: A\to A/\kappa$. But, from Lemma 3.1 of \cite{Wiegandt (2006)}, we know that there exists a maximal congruence $\kappa$ on $A$  with respect to $\rho_{f(B)}\wedge \kappa = \Delta_{A}$. So $\pi(f(B))$ is large in $A/\kappa$, by Theorem \ref{essential congruence on A/kappa} and Lemma \ref{rho vee kappa/kappa}. Also $\pi(f(B))$ is an $r$-dense subact of $A/\kappa$,   by Lemma \ref{hom-dens}. Therefore $\pi(f(B))$ is $r$-large in $A/\kappa$.
\end{proof}

\section{$r$-injective $S$-acts}

In this section we discus the notion of $r$-injectivy in \textbf{S-Act}, where $r$ is a  radical, and give some properties concerning $r$-injective $S$-acts to identify this kind of injectivy. Let us begin with the following definition.
\begin{definition}\label{d-r-inj}
Let $r$ be a  radical. An $S$-act $Q$ is called \textit{$r$-injective} if, for every  $ r $-monomorphism $ \iota :A \longrightarrow B $, every homomorphism $f : A \longrightarrow Q$ can be extended  to the homomorphism $\overline{f} : B \longrightarrow Q$ thorough  $i:A \to B$, that is $f=\overline{f}
i$. Moreover  $Q$ is called \textit{orthogonal $r$-injective} if $\overline{f}$ is unique.
\end{definition}

\begin{theorem}
Let $r$ be a  radical. Then a subact $F$ of an $r$-injective $S$-act $E$ is $r$-injective if $E/F\in \mathbb{S}_{r}$.
\end{theorem}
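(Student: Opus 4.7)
The plan is to use the $r$-injectivity of $E$ to get an extension into $E$, then show that this extension actually lands in $F$, using the hypothesis $E/F\in\mathbb{S}_r$ together with $r$-density.

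Concretely, let $\iota:A\to B$ be an $r$-monomorphism and $f:A\to F$ a homomorphism; I must produce an extension $\bar f:B\to F$ with $\bar f\iota=f$. Writing $j:F\hookrightarrow E$ for the inclusion, the $r$-injectivity of $E$ applied to the homomorphism $jf:A\to E$ yields $g:B\to E$ with $g\iota=jf$. The whole issue is now to prove $g(B)\subseteq F$; once we know this, $g$ corestricts to a homomorphism $\bar f:B\to F$ with $\bar f\iota=f$, and we are done.

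To see $g(B)\subseteq F$, consider the canonical epimorphism $\pi:E\to E/F$ and the composite $\varphi=\pi g:B\to E/F$. On $\iota(A)$, we have $\varphi(\iota(a))=\pi(jf(a))=[F]$, the Rees zero-class, for every $a\in A$. Thus $\varphi$ collapses $\iota(A)$ to a point, so it factors through the Rees quotient as a homomorphism $\tilde\varphi:B/\iota(A)\to E/F$. Here I use that $\iota(A)$ is a subact of $B$, so the Rees congruence $\rho_{\iota(A)}$ is defined and $B/\iota(A)$ makes sense.

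Now invoke functoriality of the radical $r$ applied to $\tilde\varphi$: one has $\tilde\varphi(r(B/\iota(A)))\subseteq r(E/F)$. Since $\iota$ is an $r$-monomorphism, $\iota(A)$ is $r$-dense in $B$, i.e.\ $r(B/\iota(A))=\nabla_{B/\iota(A)}$; and by hypothesis $r(E/F)=\Delta_{E/F}$. Therefore $\tilde\varphi$ sends the total relation into the diagonal, which forces $\tilde\varphi$ to be constant, hence equal to the constant map $[F]$. Consequently $\varphi=\pi g$ is constant with value $[F]$, which means $g(B)\subseteq\pi^{-1}([F])=F$, as required.

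The only step requiring care is the factorization of $\varphi$ through the Rees quotient $B/\iota(A)$ and the subsequent use of functoriality; everything else is a direct unpacking of the definitions of $r$-monomorphism, $\mathbb{S}_r$, and $r$-injectivity.
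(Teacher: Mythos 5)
Your proof is correct and follows essentially the same route as the paper: extend $jf$ through the $r$-injectivity of $E$, pass to the induced map $B/\iota(A)\to E/F$, and show it is constant so that the extension lands in $F$. The only cosmetic difference is that you derive the constancy directly from functoriality of $r$ (sending $\nabla_{B/\iota(A)}$ into $\Delta_{E/F}$), whereas the paper argues via the closure of $\mathbb{R}_r$ under homomorphic images and the triviality of $\mathbb{R}_r\cap\mathbb{S}_r$; both justifications are valid.
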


\begin{proof}
Suppose $E/F\in \mathbb{S}_{r}$ and consider the diagram
\[
\xymatrix{
& A  \ar[r]^m \ar[d]_{f}& B \\
& F      \ar@{^{(}->}[r]      &  E \\
 }
\]
in which $m$ is an $r$-monomorphism. Then there exists a homomorphism $\overline{f}:B\to E$  which commutes the above diagram. Now consider  the homomorphism $f':B/A\to E/F$ which maps each $[b]_{A}\in B/A$ to $[\overline{f}(b)]_{E}$.  Since $B/A\in \mathbb{R}_{r}$ and $E/F\in \mathbb{S}_{r}$, and also $\mathbb{R}_{r}$ is closed under homomorphic image, $f'$ is a zero homomorphism. This implies that $\overline{f}(B)\subseteq F$. That is,  $\overline{f}:B\to F$ is a homomorphism with $\overline{f}\circ m=f$, and we are done.
\end{proof}

To give a characterization of $r$-injective $S$-acts, first we give the following lemma.
\begin{lemma}
Given  a  radical $r$,  the class $\mathbb{L}_{r}=\{A/B\ | B \text{ is } r\text{-dens in } A\} $ is the radical class of a Kurosh-Amitsur radical.
\end{lemma}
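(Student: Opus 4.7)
The strategy is to verify the four characterizing axioms of a radical class recalled in Section~1 (trivial acts, homomorphic closure, inductivity, Rees extensions) for $\mathbb{L}_r$. The key observation, obtained by unwinding $c^r_A(B)=\pi^{-1}([B]_{r(A/B)})$ and noting that $\pi(B)$ is the zero class of $A/B$, is
\[
B \text{ is } r\text{-dense in } A \;\Longleftrightarrow\; [B]_{r(A/B)} = A/B \;\Longleftrightarrow\; r(A/B)=\nabla_{A/B}.
\]
Consequently $\mathbb{L}_r$ consists precisely of the Rees quotients $A/B$ belonging to $\mathbb{R}_r$, and every such member carries a distinguished zero class inherited from the quotient.

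The three easy conditions are dispatched using only axiom~(i) of the Hoehnke radical. For (1), any trivial $S$-act is $A/A$ with $A$ trivially $r$-dense in itself. For (2), given $X=A/B\in\mathbb{L}_r$ and a surjection $f\colon X\twoheadrightarrow Y$, axiom~(i) sends $\nabla_X=r(X)$ into $r(Y)$, forcing $r(Y)=\nabla_Y$; the image of $X$'s zero class is a singleton subact over which $Y$ is realized as a Rees quotient in $\mathbb{L}_r$. For (3), in an ascending chain $\{A_i\}\subseteq\mathbb{L}_r$ any two elements of $A=\bigcup A_i$ lie together in some $A_j$ and are thus related in $\nabla_{A_j}=r(A_j)\subseteq r(A)$ via axiom~(i) on $A_j\hookrightarrow A$, while the zero class of $A_1$ survives to $A$.

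The substantive step is (4). Suppose $A/\rho\in\mathbb{L}_r$ and $\Sigma_\rho\subseteq\mathbb{L}_r$. The plan is first to establish $\rho\leq r(A)$: for a non-diagonal $(a,a')\in\rho$ whose class $B$ is a subact, $B\in\Sigma_\rho\subseteq\mathbb{R}_r$ yields $(a,a')\in\nabla_B=r(B)\subseteq r(A)$ by axiom~(i) applied to $B\hookrightarrow A$. Once $\rho\leq r(A)$, the canonical surjection $q\colon A/\rho\twoheadrightarrow A/r(A)$ is well defined; axiom~(i) gives $q(r(A/\rho))\subseteq r(A/r(A))$, and axiom~(ii) gives $r(A/r(A))=\Delta_{A/r(A)}$. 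Since $q$ is onto, $q(\nabla_{A/\rho})=\nabla_{A/r(A)}$, so $\nabla_{A/r(A)}\subseteq\Delta_{A/r(A)}$, forcing $|A/r(A)|\leq 1$ and hence $r(A)=\nabla_A$. A fixed point of $A$, needed to certify $A\in\mathbb{L}_r$, is provided either by a singleton $\rho$-class or by a subact $\rho$-class in $\Sigma_\rho\subseteq\mathbb{L}_r$ (which itself carries one). Finally, since the resulting radical class is closed under Rees extensions and its members arise as Rees quotients, the associated radical $r'$ with $\mathbb{R}_{r'}=\mathbb{L}_r$ is Kurosh--Amitsur.

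The main obstacle is step (4), specifically securing $\rho\leq r(A)$ for pairs lying in a $\rho$-class that fails to be a subact (and hence is not captured by $\Sigma_\rho$); this demands a secondary reduction, e.g.\ restricting to cyclic subacts generated within each class or passing to the Rees congruence case via a lattice-theoretic argument. The remaining portion of (4) is a clean application of both Hoehnke axioms through the auxiliary quotient $A/r(A)$, whereas (1)--(3) rest only on axiom~(i).
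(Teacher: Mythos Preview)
Your approach is correct and follows the same overall plan as the paper: both verify the characterizing axioms for a (Kurosh--Amitsur) radical class after first identifying $\mathbb{L}_r$ with the class of members of $\mathbb{R}_r$ possessing a zero. The difference is one of economy. The paper simply invokes the already-known closure properties of $\mathbb{R}_r$ (homomorphic images, inductive property, Rees extensions) as a black box, so the only remaining work is to check that a zero element survives each construction. You instead re-derive $\mathbb{R}_r$-membership in each case directly from the Hoehnke axioms~(i)--(ii); this is perfectly valid but longer, and in step~(4) it amounts to reproving from scratch that $\mathbb{R}_r$ is closed under Rees extensions, via the auxiliary quotient $A/r(A)$.

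Your ``main obstacle'' is not a real obstacle. The paper appeals to Lemma~2.4 of \cite{Wiegandt (2006)}, for which the Rees-extension axiom need only be verified for \emph{Rees} congruences $\rho$; the paper's own proof accordingly begins ``let $\rho$ be a Rees congruence on $A$''. For such $\rho$ every non-singleton class is by definition a subact and hence lies in $\Sigma_\rho\subseteq\mathbb{L}_r\subseteq\mathbb{R}_r$, so your argument for $\rho\le r(A)$ goes through with no residual case. The worry about $\rho$-classes that ``fail to be a subact'' simply does not arise, and no secondary reduction is needed.
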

\begin{proof}
To prove, we use Lemma 2.4 of \cite{Wiegandt (2006)} and we show that $\mathbb{L}_{r}$ is  closed under homomorphic image and Rees extension, and has inductive property.

 \medskip
 The closedness of $\mathbb{L}_{r}$ under homomorphic image: since each $X \in \mathbb{L}_{r}$ is a member of $\mathbb{R}_{r}$ and has a zero element, every homomorphic image of $X$ such as $Y$ belongs to $\mathbb{R}_{r}$ and has an element such as $\theta_{0}$. So, $\{\theta_{0}\}$ is $r$-dense in $Y$. Therefore $Y/\{\theta_{0}\}\cong Y$ is in $\mathbb{L}_{r}$ and this means that  $\mathbb{L}_{r}$ is closed under homomorphic image.

 \medskip
The closedness of $\mathbb{L}_{r}$ under Rees extension:  let $A$ be an $S$-act and $\rho$ be a Rees congruence on $A$ such that $\Sigma_{\rho}\subseteq \mathbb{L}_{r}$ and $A/\rho\in \mathbb{L}_{r}$. Then  $A$ has a zero element such as $\theta_{0}$, since every $B\in\Sigma_{\rho}$  has a zero element. Also $A$ belongs to $\mathbb{R}_{r}$, since $\Sigma_{\rho}\subseteq \mathbb{L}_{r}\subseteq \mathbb{R}_{r}$ and $A/\rho\in \mathbb{L}_{r}\subseteq \mathbb{R}_{r}$. Therefore $A/\{\theta_{0}\}\cong A$ is in $\mathbb{L}_{r}$ and this means that  $\mathbb{L}_{r}$ is closed under Rees extension.

 \medskip
Inductive property: let $\{A_{i}\}_{i\in I}$ be an ascending  chain in $ \mathbb{L}_{r}$. Then $\bigcup_{i \in I}A_{i}$ has a zero element such as $\theta_{0}$ and belongs to $\mathbb{R}_{r}$. Hence $\{\theta\}$ is $r$-dense in  $\bigcup_{i \in I}A_{i}$. Therefore $\bigcup_{i \in I}A_{i}/\{\theta_{0}\}\cong \bigcup_{i \in I}A_{i}$ is in $\mathbb{L}_{r}$ and this means that  $\mathbb{L}_{r}$ has inductive property.
\end{proof}

\begin{theorem}
Given  a  radical $r$, the class of $r$-injective $S$-acts  is exactly the class of $t_{\mathbb{L}_{r}}$-injective  $S$-acts, where $t_{\mathbb{L}_{r}}$ is the induced Kurosh-Amitsur radical by $\mathbb{L}_{r}$.
\end{theorem}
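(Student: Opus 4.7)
The plan is to show that the classes of $r$-monomorphisms and $t_{\mathbb{L}_r}$-monomorphisms coincide, after which the equality of the corresponding classes of injective $S$-acts follows directly from the definition of injectivity relative to a class of monomorphisms. The first step is to unpack the density condition: for any radical $r'$, the formula $c^{r'}_A(B) = \pi^{-1}([B]_{r'(A/B)})$, with $\pi \colon A \to A/B$ the canonical epimorphism, shows that a subact $B \leq A$ is $r'$-dense in $A$ if and only if $r'(A/B) = \nabla_{A/B}$, i.e.\ if and only if the Rees factor $A/B$ belongs to $\mathbb{R}_{r'}$. Applied to $r' = t_{\mathbb{L}_r}$, the preceding lemma gives $\mathbb{R}_{t_{\mathbb{L}_r}} = \mathbb{L}_r$, so the theorem reduces to proving that for every Rees factor $A/B$, the two conditions $A/B \in \mathbb{R}_r$ and $A/B \in \mathbb{L}_r$ are equivalent.

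The inclusion $\mathbb{L}_r \subseteq \mathbb{R}_r$ is immediate from the very definition of $\mathbb{L}_r$. For the converse when $X = A/B$ is a Rees factor, the crucial observation is that $X$ carries a distinguished zero element, namely the collapsed class $[B]$. Therefore the Rees factor of $X$ by the singleton subact $\{[B]\}$ satisfies $X/\{[B]\} \cong X$; if $X \in \mathbb{R}_r$, this isomorphism says precisely that $\{[B]\}$ is $r$-dense in $X$, which exhibits $X = A/B$ as an element of $\mathbb{L}_r$.

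There is no serious obstacle; the only subtle point is the elementary remark that Rees factors always carry a zero element, which is what collapses the a priori distinct density conditions for $r$ and for $t_{\mathbb{L}_r}$ into one and the same condition on subacts, and hence identifies the two classes of monomorphisms with respect to which injectivity is measured.
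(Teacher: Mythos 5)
Your proof is correct and takes essentially the same route as the paper: both arguments reduce the statement to the coincidence of the two classes of dense subacts, using that $B$ is dense in $A$ for a radical exactly when the Rees factor $A/B$ lies in the corresponding radical class, together with the inclusion $\mathbb{L}_{r}\subseteq\mathbb{R}_{r}$. (The only cosmetic difference is that for the reverse inclusion you detour through the zero element $[B]$ of the Rees factor, whereas the paper obtains $B/m(A)\in\mathbb{L}_{r}$ directly from the definition of $\mathbb{L}_{r}$; both steps are valid.)
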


\begin{proof}
One can easily see that a subact $B$ of an $S$-act $A$ is $r$-dense if and only if $A/B\in \mathbb{R}_{r}$. So  $\mathbb{L}_{r}\subseteq \mathbb{R}_{r}$ and this implies that every $t_{\mathbb{L}_{r}}$-dense subact of   $A$ is $r$-dense. Hence  every $r$-injective $S$-act is  $t_{\mathbb{L}_{r}}$-injective.
Conversely let $I$ be a  $t_{\mathbb{L}_{r}}$-injective $S$-act. Then since, for every $r$-monomorphism $m:A\rightarrow B$, $B/m(A)$ belongs to  $\mathbb{L}_{r}$, every homomorphism $f:A\rightarrow I$ can be extended to  $\overline{f}:B\rightarrow I$. Therefore $I$ is $r$-injective.

 \end{proof}

\begin{theorem}
Given a   radical $r$, every orthogonal $r$-injective $S$-act belongs to  $\mathbb{S}_{t_{\mathbb{L}_{r}}}$, where $t_{\mathbb{L}_{r}}$ is the Kurosh-Amitsur radical  given by $\mathbb{L}_{r}$.
\end{theorem}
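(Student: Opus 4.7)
The plan is to argue by contradiction. Suppose $Q$ is orthogonal $r$-injective but $Q \notin \mathbb{S}_{t_{\mathbb{L}_r}}$, so that $t_{\mathbb{L}_r}(Q) \neq \Delta_{Q}$. Since $t_{\mathbb{L}_r}$ is Kurosh-Amitsur, the congruence $t_{\mathbb{L}_r}(Q)$ is a Rees congruence whose system $\Sigma_{t_{\mathbb{L}_r}(Q)}$ is non-empty, and every member $X$ of this system is a non-trivial subact of $Q$ satisfying $t_{\mathbb{L}_r}(X) = \nabla_X$, i.e.\ $X \in \mathbb{R}_{t_{\mathbb{L}_r}}$. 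By the preceding lemma, $\mathbb{R}_{t_{\mathbb{L}_r}} = \mathbb{L}_r$, hence $X \in \mathbb{L}_r$. The strategy is to use such an $X$ to exhibit two distinct extensions of the same homomorphism along a single $r$-monomorphism, contradicting orthogonality.

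Unpacking $X \in \mathbb{L}_r$, there exist an $S$-act $E$ and a proper $r$-dense subact $B \leq E$ with $X \cong E/B$; identify $X$ with $E/B$ as a subact of $Q$. The inclusion $\iota\colon B \hookrightarrow E$ is then an $r$-monomorphism. The critical observation is that the element $x_0 := [B] \in E/B = X \subseteq Q$, namely the class of the Rees quotient that collapses all of $B$, is an $S$-fixed point: for every $s \in S$ and every $b \in B$ we have $sb \in B$ (as $B$ is a subact), so $s \cdot [B] = [sb] = [B]$. Consequently, for any $S$-act $Y$ the constant assignment $y \mapsto x_0$ is a bona fide $S$-homomorphism $Y \to Q$.

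Now I exhibit two distinct extensions of the constant homomorphism $f\colon B \to Q$, $b \mapsto x_0$, along $\iota$. First, the constant map $g\colon E \to Q$, $e \mapsto x_0$. Second, the composition $\phi\colon E \twoheadrightarrow E/B \cong X \hookrightarrow Q$. Both restrict to $f$ on $B$, since $\phi$ sends every element of $B$ to $[B] = x_0$. However $\phi(E) = X$ is non-trivial, so it contains some point distinct from $x_0$, whereas $g$ is identically $x_0$; thus $\phi \neq g$. This contradicts the orthogonal $r$-injectivity of $Q$, forcing $t_{\mathbb{L}_r}(Q) = \Delta_{Q}$, i.e.\ $Q \in \mathbb{S}_{t_{\mathbb{L}_r}}$. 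The only non-routine ingredient is recognising that the class $[B]$ provides an $S$-fixed point and hence that constant maps into $Q$ through $x_0$ are legitimate homomorphisms; once this is in hand the two extensions present themselves immediately, and the rest is bookkeeping with the definition of Kurosh-Amitsur radical and the preceding identification $\mathbb{R}_{t_{\mathbb{L}_r}} = \mathbb{L}_r$.
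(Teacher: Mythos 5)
Your proof is correct and follows essentially the same strategy as the paper's: both argue by contradiction, extract a non-trivial $X\in\Sigma_{t_{\mathbb{L}_{r}}(Q)}\subseteq\mathbb{R}_{t_{\mathbb{L}_{r}}}=\mathbb{L}_{r}$, and then violate orthogonality by exhibiting a constant extension (through the zero of $X$) and a non-constant extension of one and the same homomorphism along a single $r$-monomorphism. The only difference is the choice of that $r$-monomorphism --- the paper extends from the singleton zero $\{\theta_{X}\}\hookrightarrow X$ (using $\mathbb{L}_{r}\subseteq\mathbb{R}_{r}$ to see this inclusion is $r$-dense), whereas you extend from the presenting inclusion $B\hookrightarrow E$ with $E/B\cong X$ --- and both choices work equally well.
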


\begin{proof}
Let $I$ be an orthogonal $r$-injective $S$-act and $I\notin \mathbb{S}_{t_{\mathbb{L}_{r}}}$. Then there exists a non-trivial homomorphism $f$ from an $S$-act $A\in\mathbb{L}_{r}$ to $I$. But since each  $A\in\mathbb{L}_{r}$ has  a zero element such as $\theta_{A}$,  $I$ has a zero element $\theta_{I}$, and hence the zero homomorphism $0_{\Theta I}: \{\theta_{A}\}\to I$, which maps $\theta_{A}$ to $\theta_{I}$ has at least two extension $f$ and the zero homomorphism $0_{\Theta I}(\theta_{A})=\theta_{I}$. This contradict orthogonally of $I$.
\end{proof}
 We end this section by expressing an interesting property of $r$-closed subacts of an $r$-injective $S$-act wherewith we shall give  a characterization of $r$-injective $S$-acts in Section $7$.
\begin{theorem}\label{inj-den}
Let $I$ be an $r$-injective $S$-act and $A$ be an $r$-dense subact of an $S$-act $B$. Then  the image of  $B$ under every extension $\overline{f}:B\to I$ of a homomorphism $f: A\to I$ is a subact of $c_{I}^{r}(f(A))$.
\end{theorem}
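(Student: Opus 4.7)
The plan is to analyse the composite map $\pi \circ \overline{f}: B \to I/f(A)$, where $\pi: I \to I/f(A)$ denotes the canonical Rees epimorphism, and to exploit the hypothesis that $A$ is $r$-dense in $B$, i.e. $r(B/A) = \nabla_{B/A}$. The role of $r$-injectivity of $I$ is only to guarantee that an extension $\overline{f}$ exists in the first place; the inclusion claimed in the theorem will in fact hold for any extension, regardless of $r$-injectivity.

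First I would observe that $\pi \circ \overline{f}$ sends every element of $A$ to the single element $\pi(f(A))$, because $\overline{f}|_A = f$ takes values inside the subact $f(A)$. Consequently the Rees congruence $\rho_A$ on $B$ is contained in the kernel of $\pi \circ \overline{f}$, so this map factors through the canonical epimorphism $q: B \to B/A$, producing a homomorphism $g: B/A \to I/f(A)$ with $g \circ q = \pi \circ \overline{f}$. By construction $g$ sends the zero element of $B/A$ (namely the class $[A]_{\rho_A}$) to $\pi(f(A))$.

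Now, because $A$ is $r$-dense in $B$ we have $r(B/A) = \nabla_{B/A}$, and since $r$ is a radical the homomorphism $g$ carries $r(B/A)$ into $r(I/f(A))$. Hence $(g(x),g(y)) \in r(I/f(A))$ for every pair $x,y \in B/A$. Specialising $y$ to the zero of $B/A$ and $x$ to $[b]_{\rho_A}$ for an arbitrary $b \in B$ gives $(\pi(\overline{f}(b)),\pi(f(A))) \in r(I/f(A))$, i.e. $\pi(\overline{f}(b)) \in [f(A)]_{r(I/f(A))}$. Therefore $\overline{f}(b) \in \pi^{-1}([f(A)]_{r(I/f(A))}) = c_{I}^{r}(f(A))$ for every $b \in B$, which yields $\overline{f}(B) \subseteq c_{I}^{r}(f(A))$ as required. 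There is no real obstacle; the key conceptual ingredients are the factorisation of $\pi\circ\overline{f}$ through $B/A$ and the radical property of $r$ applied to the induced morphism $g$.
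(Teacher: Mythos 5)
Your argument is correct. Every step checks out: since $\overline{f}|_A=f$ maps $A$ into the subact $f(A)$, the Rees congruence $\rho_A$ is indeed contained in $\ker(\pi\circ\overline{f})$, so the factorisation $g:B/A\to I/f(A)$ exists and is a homomorphism; $r$-density gives $r(B/A)=\nabla_{B/A}$; and axiom (i) of a Hoehnke radical applied to $g$ yields $(\pi(\overline{f}(b)),[f(A)])\in r(I/f(A))$ for all $b\in B$, which is exactly membership in $c^r_I(f(A))=\pi^{-1}\bigl([f(A)]_{r(I/f(A))}\bigr)$. You are also right that $r$-injectivity of $I$ plays no role beyond guaranteeing that some extension exists.

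The route is genuinely different from the paper's, though. The paper disposes of the statement in one line by invoking the continuity axiom (c3) of the closure operator: $\overline{f}(c^r_B(A))\leq c^r_I(\overline{f}(A))$, combined with $c^r_B(A)=B$ (density) and $\overline{f}(A)=f(A)$. What you have done is unpack the definition of $c^r$ and re-derive precisely the instance of (c3) that is needed, directly from the radical axioms via the factorisation through the Rees quotient $B/A$. The paper's version is shorter but leans on the earlier unproved assertion that $c^r$ satisfies the closure-operator axioms (``one can easily check''); your version is self-contained and in effect supplies the verification of continuity for $c^r$ in this situation, at the cost of a few more lines. Both are valid; yours is arguably the more honest proof given that the paper never writes out why (c3) holds for $c^r$.
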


\begin{proof}
Let $A$ be  $r$-dense subact  of an $S$-act  $B$ and $f:A\rightarrow I$ be a homomorphism. Then there exists $\overline{f}:B\to I$ which commutes the following rectangle.

\[\xymatrix{
 A  \ar@{^{(}->}[rr]  \ar[d]_{f}  && B \ar[d]^(.35){\overline{f}} \ar@{^{}-->}[dl]_(.55){\overline{f}}  \\
 f(A) \ar@{^{(}->}[r]
 &c_{I}^{r}(f(A)) \ar@{^{(}->}[r]^{} & I }\]
  But we have by the third property of a closure operator $\overline{f}(c_{B}^{r}(A))\leq c_{I}^{r}(\overline{f}(A))$. Now since $c_{B}^{r}(A)=B$ and $\overline{f}(A)= f(A)$, so $\overline{f}(B)\leq c_{I}^{r}(f(A))$ and we are done.
\end{proof}

One can easily get the following corollary from the above theorem.

\begin{corollary}\label{p-inj}
 {\rm (i)} Let  $A$ be an $S$-act and  $I$ be an $r$-injective extension of $A$. Then $c^{r}_{I}(A)$ is $r$-injective.

{\rm (ii)} Let $E(A)$ be an injective hull of an $S$-act $A$. Then $c^{r}_{E(A)}(A) $ is $r$-injective.

{\rm (iii)}An $S$-act $A$ is $r$-injective if and only if  every $r$-closed subact of $A$  is $r$-injective.
\end{corollary}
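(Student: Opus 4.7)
The plan is to derive all three parts as consequences of Theorem~\ref{inj-den} together with the idempotence of $c^{r}$, plus the trivial observation that injective $S$-acts are $r$-injective (since $r$-monomorphisms form a subclass of all monomorphisms).

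For (i), the strategy is the standard codomain-restriction trick. Suppose $m\colon X\to Y$ is an $r$-monomorphism and $f\colon X\to c^{r}_{I}(A)$ is a homomorphism. Compose $f$ with the inclusion $\iota\colon c^{r}_{I}(A)\hookrightarrow I$ and use the $r$-injectivity of $I$ to extend $\iota f$ to some $g\colon Y\to I$. Then, because $m(X)$ is $r$-dense in $Y$, Theorem~\ref{inj-den} applies and yields
\[
g(Y)\ \subseteq\ c^{r}_{I}\bigl(g(m(X))\bigr)\ =\ c^{r}_{I}\bigl(f(X)\bigr)\ \subseteq\ c^{r}_{I}\bigl(c^{r}_{I}(A)\bigr)\ =\ c^{r}_{I}(A),
\]
where the last equality uses the idempotence of $c^{r}$ recorded just after its definition. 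Thus $g$ factors as a homomorphism $Y\to c^{r}_{I}(A)$ extending $f$, so $c^{r}_{I}(A)$ is $r$-injective.

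For (ii), the injective hull $E(A)$ is injective, hence $r$-injective, so (i) applied with $I=E(A)$ gives the statement. For (iii), the forward direction is immediate from (i) with $I=A$: if $B\leq A$ is $r$-closed then $B=c^{r}_{A}(B)$ is $r$-injective. For the converse, observe that $A/A$ is trivial, so $r(A/A)=\nabla_{A/A}$ and hence $c^{r}_{A}(A)=A$, i.e.\ $A$ is $r$-closed in itself; the hypothesis then makes $A$ itself $r$-injective.

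The only even mildly delicate step is checking that $g(Y)\subseteq c^{r}_{I}(A)$ allows one to view $g$ as a morphism into $c^{r}_{I}(A)$; this is immediate because $c^{r}_{I}(A)$ carries the subact structure inherited from $I$, so set-theoretic containment of the image gives a genuine $S$-act homomorphism. Everything else is a direct citation of Theorem~\ref{inj-den} and the idempotence of $c^{r}$, so no serious obstacles are anticipated.
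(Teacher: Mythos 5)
Your proof is correct and takes essentially the route the paper intends: the paper gives no explicit argument, merely remarking that the corollary follows from Theorem~\ref{inj-den}, and your corestriction argument (extend into $I$, then use Theorem~\ref{inj-den} plus monotonicity and idempotence of $c^{r}$ to land inside $c^{r}_{I}(A)$) is exactly that derivation, with (ii) and (iii) handled by the obvious specializations.
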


\section{The well-Behaviour of $r$-injectivy}

Different sets of conditions are sufficient, although not always necessary, for the well-Behaviour of injectivy. The crucial conditions to verify whether injectivy is well-Behavior are so-called Banaschewski's condition, which is given in the previous section, $r$-transferability condition, and Direct limit condition, see \ref{b5}.
 In this section to verify the well-behaviour of $r$-injectivy, for a given   radical $r$, we first  check these conditions.

\begin{lemma}[$r$-transferability condition]\label{b4}
The category {\bf S-Act} satisfies the  \linebreak $r$-transferability property.  That is, every diagram

\[\xymatrix{
& A  \ar[r]^m \ar[d]_{f}& B  \\
& C  \\
 }\]
with the $r$-monomorphism $m$ can be completed to a commutative square as follows in which $u$ is an $r$-monomorphism.
\[\xymatrix{
& A  \ar[r]^m \ar[d]_{f}& B\ar[d]_{v}  \\
& C  \ar[r]^u &  D\\
 }\]
\end{lemma}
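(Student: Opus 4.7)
The approach is to take $D$ to be the pushout of $m$ and $f$ in \textbf{S-Act}. Concretely, $D=(B\amalg C)/\!\sim$, where $\sim$ is the smallest congruence on $B\amalg C$ containing the pairs $(m(a),f(a))$ for $a\in A$; let $v:B\to D$ and $u:C\to D$ be the induced homomorphisms, so that the outer square commutes by construction. Two things then remain to verify: that $u$ is a monomorphism, and that $u(C)$ is $r$-dense in $D$.

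For injectivity of $u$, I would first observe that because $m$ and $f$ are $S$-homomorphisms, the generating set $\{(m(a),f(a))\mid a\in A\}$ is already closed under the $S$-action, since $(s\cdot m(a),s\cdot f(a))=(m(sa),f(sa))$ is of the same form. Consequently the congruence $\sim$ coincides with the underlying-set equivalence relation generated by these pairs. A direct chain analysis, using injectivity of $m$, then shows that the $\sim$-class of any $c\in C$ is $\{c\}\cup\{m(a):f(a)=c\}$, and in particular contains no other element of $C$. Hence $u$ is injective.

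For $r$-density of $u(C)$ in $D$, recall that a subact $X\le Y$ is $r$-dense exactly when $Y/X\in\mathbb{R}_r$ (directly from the definition of $c^r$). Since $v(m(a))=u(f(a))\in u(C)$ for every $a\in A$, the composite $B\xrightarrow{v} D\to D/u(C)$ kills $m(A)$ and thus factors through the canonical epimorphism $B\to B/m(A)$, yielding a homomorphism $\varphi:B/m(A)\to D/u(C)$. Surjectivity of $\varphi$ is immediate since every element of $D$ is represented either by some $b\in B$ or by some $c\in C$. Injectivity of $\varphi$ follows from the same description of the $\sim$-classes: if $b\sim b'$ in $D$ with $b\ne b'$, then necessarily both lie in $m(A)$, so $[b]_{m(A)}=[b']_{m(A)}$ already. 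Therefore $\varphi$ is an isomorphism, and since $m$ is an $r$-monomorphism we have $B/m(A)\in\mathbb{R}_r$, whence $D/u(C)\in\mathbb{R}_r$, as required.

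The main obstacle is the combinatorial bookkeeping for the $\sim$-equivalence classes in the pushout, which is used both to show $u$ injective and to show $\varphi$ injective. Once that description is in hand, the remainder is a routine pushout/quotient manipulation together with the closure of $\mathbb{R}_r$ under isomorphism.
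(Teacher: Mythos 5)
Your proof is correct and follows essentially the same route as the paper: the paper's explicit construction $D=(B\setminus m(A))\,\dot\cup\, C$ with its case-defined action is precisely the concrete description of your pushout $(B\amalg C)/\!\sim$ (your analysis of the $\sim$-classes recovers it), and both arguments conclude by identifying $D/u(C)\cong B/m(A)$ and using that $r$-density of $m(A)$ in $B$ means $B/m(A)\in\mathbb{R}_r$.
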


\begin{proof}
Consider $D=(B\setminus m(A))\dot{\cup}C$ together with the action
\[
s.x=\begin{dcases}
s*'x & x\in C \\
s*x & x\in B\setminus m(A) \text{ and } s*x\in B\setminus m(A)\\
f(m^{-1}(s*x)) & x\in B\setminus m(A) \text{ and } s*x\in m(A)
\end{dcases}
\]
in which $*$ is the action of $B$ and $*'$ is the action of $C$. Clearly,
\[\begin{matrix}
 v:&B& \longrightarrow & D \qquad \qquad \qquad\qquad\ \ \\
& b & \mapsto  & v(b)=\begin{dcases}
b & b\in B\setminus A\\
f(b) & b\in A
\end{dcases} \\
 \end{matrix}\]
and the inclusions map $u:C\rightarrow D$ makes  the following diagram commutative.
\[\xymatrix{
& A  \ar[r]^m \ar[d]_{f}& B\ar[d]_{v}  \\
& C  \ar[r]^u &  D\\
 }\]
Also, since $D/u(C) \cong B/m(A)$ and $m(A)$ is $r$-dense in $B$, $u(C)$ is $r$-dense in $D$.
\end{proof}

  We recall that a directed family of $S$-acts  is a family $(A_{i})_{i\in I}$ of $S$-acts indexed by an up-directed set $(I, \leq )$ endowed by a family $(f_{ij} : A_{i} \to A_{j} )_{i\leq j\in I}$ of monomorphisms such that given $i\leq j \leq k\in I$ we have $f_{jk}\circ f_{ij}=f_{ik}$, also $f_{ii}=id_{A_{i}}$, for every $i\in I$. Note that the direct limit of a directed family $((A_{i})_{i\in I} , (f_{ij} )_{i\leq j\in I} )$ in {\bf S-Act} is given as $\underrightarrow{lim} (A_{i})_{i\in I} = \coprod_{i\in I} A_{i} /\chi$,
 where the congruence $ \chi $ is given by $a_{i} \chi a_{j}$ if and only if there exists $k\geq i,j$ such that $u_{k}f_{ik}(a_{i}) = u_{k} f_{jk}(a_{j})$ in which each $u_{i} : A_{i}\to \coprod_{i\in I} A_{i}$ is an injection map of the coproduct.

To establish the direct limit condition for $r$-injectivy , or for short $r$-direct limit, we need the  following lemma.
\begin{lemma}\label{...}
Let $\mathbb{R}$ be a subclass of {\bf S-Act} which  is closed under homomorphic image and Rees congruence extension. Then $\mathbb{R}$ is a radical class of a  radical if and only if $\underrightarrow{lim} (A_{i})_{i\in I}\in \mathbb{R}$, for every  directed family  $((A_{i})_{i\in I} , (f_{ij} )_{i\leq j\in I} )$ of  $\mathbb{R}$.
\end{lemma}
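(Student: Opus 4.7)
For the easier direction ($\Leftarrow$), the plan is to derive each of the four radical-class axioms recalled in the preliminaries. The hypotheses already furnish closure under homomorphic images and Rees extensions, and the trivial $S$-acts lie in $\mathbb{R}$ because, assuming $\mathbb{R}$ is nonempty, any trivial act is a homomorphic image of any member of $\mathbb{R}$ (collapse to a point). For the inductive property, an ascending chain $\{A_i\}_{i\in I}$ in $\mathbb{R}$ is a particular directed family whose transitions are the inclusion monomorphisms, and its direct limit is canonically $\bigcup_{i\in I}A_i$; the hypothesis therefore gives $\bigcup_{i\in I}A_i\in\mathbb{R}$.

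For the harder direction ($\Rightarrow$), suppose $\mathbb{R}$ is a radical class and fix a directed family $\bigl((A_i)_{i\in I},(f_{ij})_{i\le j}\bigr)$ in $\mathbb{R}$, with direct limit $L=\underrightarrow{lim}\,A_i$. Since each $f_{ij}$ is a monomorphism, the canonical maps $A_i\to L$ are injective, so I identify each $A_i$ with its image and regard $L=\bigcup_{i\in I}A_i$ as a directed union of subacts that lie individually in $\mathbb{R}$. The main obstacle is that this union is merely directed, not a chain, so the inductive axiom does not apply verbatim; the plan is to arrange the situation so that the Rees extension axiom becomes available through a Zorn argument.

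To this end, I would fix some $i_0\in I$ and pass to the quotient $\bar L:=L/A_{i_0}$, whose image $\theta=[A_{i_0}]$ is a zero element. Restricting to the cofinal subset $\{j\in I : j\ge i_0\}$ and setting $\bar A_j:=A_j/A_{i_0}$, each $\bar A_j$ lies in $\mathbb{R}$ (homomorphic image) and contains $\theta$, while $\bar L=\bigcup_{j\ge i_0}\bar A_j$. I then apply Zorn's Lemma to
\[
\mathcal{S}=\{\,\bar M\le\bar L \ :\ \theta\in\bar M,\ \bar M\in\mathbb{R}\,\},
\]
which is nonempty (it contains the trivial subact $\{\theta\}$), and in which chains have upper bounds by the inductive property applied to chains of subacts of $\bar L$. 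Let $\bar M$ be a maximal element. If $\bar M\neq\bar L$, pick $\bar A_j\not\subseteq\bar M$ and form the subact $\bar N=\bar M\cup\bar A_j$, still containing $\theta$.

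The crucial point is that $\theta\in\bar M\cap\bar A_j$, so restricting the quotient $\bar N\to\bar N/\rho_{\bar A_j}$ to $\bar M$ yields a surjective homomorphism; hence $\bar N/\rho_{\bar A_j}$ is a homomorphic image of $\bar M\in\mathbb{R}$, while $\Sigma_{\rho_{\bar A_j}}=\{\bar A_j\}\subseteq\mathbb{R}$. Closure under Rees extensions then gives $\bar N\in\mathbb{R}$, contradicting the maximality of $\bar M$. Therefore $\bar M=\bar L\in\mathbb{R}$, and one final application of the Rees extension axiom to $A_{i_0}\in\mathbb{R}$ and $L/A_{i_0}\in\mathbb{R}$ yields $L\in\mathbb{R}$. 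The key insight, and the step where most care is needed, is the preliminary quotienting by $A_{i_0}$: it is precisely the creation of a common zero element $\theta$ that makes $\bar M\cap\bar A_j$ nonempty, which in turn is what allows $\bar N/\rho_{\bar A_j}$ to be realized as a homomorphic image of $\bar M$ in the Rees extension step.
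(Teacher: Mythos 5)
Your proof is correct, and your backward direction is essentially the paper's: chains are directed families whose direct limit is the union, so the inductive property follows and the characterization of radical classes applies. Your forward direction, however, takes a genuinely different route. The paper argues with the radical $r$ itself: each image $\pi\circ u_{i}(A_{i})$ in $L=\underrightarrow{lim}\,(A_{i})_{i\in I}$ is a radical subact, hence (by the remark in the preliminaries) sits inside some class $X_{i}\in\Sigma_{r(L)}$; directedness together with the disjointness of these classes forces all the $X_{i}$ to coincide with a single $X_{j_{0}}$, which must therefore be all of $L$, so $r(L)=\nabla_{L}$. You never invoke $r$ or $\Sigma_{r(L)}$ at all: you work purely with the closure properties of the class $\mathbb{R}$ (trivial acts, homomorphic images, inductive property, Rees extensions), replacing the ``all images lie in one $r$-class'' step by a Zorn maximality argument on subacts of $L/A_{i_{0}}$ containing the created zero $\theta$, and your key observation --- that $\bar N/\rho_{\bar A_{j}}$ is a homomorphic image of $\bar M$ precisely because $\theta\in\bar M\cap\bar A_{j}$ --- is the correct substitute for the disjointness argument. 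The paper's proof is shorter; yours is more self-contained and makes explicit that only the axioms of a radical class are needed, at the cost of an extra Zorn application. The only caveats are routine empty-act edge cases (if every $A_{i}$ is empty the quotient produces no zero, and the empty trivial act is not a homomorphic image of a nonempty one), which the paper glosses over as well.
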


\begin{proof}
$(\Rightarrow)$
Let $r$ be a  radical and $((A_{i})_{i\in I} , (f_{ij} )_{i\leq j\in I} )$ be a directed family in $\mathbb{R}_{r}$. Then since $\mathbb{R}_{r}$ is  closed under homomorphic image, $\pi\circ u_{i}(A_{i})$ is a radical subact of $\underrightarrow{lim} (A_{i})_{i\in I}$, for the epimorphism $\pi:\coprod_{i\in I} A_{i} \to \underrightarrow{lim} (A_{i})_{i\in I}$ and  every $i\in I$. So, by Remark \ref{•}, there exists $X_{i}\in \Sigma_{r(\underrightarrow{lim} (A_{i})_{i\in I})}$ such that $\pi\circ u_{i}(A_{i})\leq X_{i}$, for every $i\in I$.

Now we show  that for a fixed $j_{0}\in I$, $X_{i}=X_{j_{0}}$, for every $i\in I$. Because, for every $i\in I$, there exist $k\in I$ with $f_{ik}(A_{i}), f_{j_{0}}(A_{j_{0}})\leq A_{k}$. So we have $\pi\circ u_{i}(A_{i}),\pi\circ u_{j_{0}}(A_{j_{0}}), \pi\circ u_{k}(A_{k})\leq X_{k}$. Therefore $X_{i}=X_{j_{0}}=X_{k}$ follows from this fact that $\Sigma_{r(\underrightarrow{lim} (A_{i})_{i\in I})}$ consist of some disjoint subacts of $\underrightarrow{lim} (A_{i})_{i\in I}$ and hence $\underrightarrow{lim} (A_{i})_{i\in I}=\bigcup_{i\in I} \pi\circ u_{i}(A_{i})=X_{j_{0}}\in \mathbb{R}_{r}$.

$ (\Leftarrow) $ Conversely, let $R$ be a subclass of $S$-acts which is closed under homomorphic image and Rees congruence extension. Then since every chain in $\mathbb{R}$ is  a directed family, $\mathbb{R}$ has the inductive property. So, by Theorem 2.4 of \cite{Wiegandt (2006)}, $\mathbb{R}$ is a radical class of a radical.
\end{proof}

\begin{definition}
 A  directed family  $\mathfrak{D}=((A_{i})_{i\in I} , (f_{ij} )_{i\leq j\in I} )$ of $S$-acts  is called \textit{$r$-directed}  if  each $f_{ij}$ is  an $r$-monomorphisms, for every ${i\leq j\in I}$.
\end{definition}

 \begin{theorem}[ $r$-direct limit condition]\label{b5}
Let $I$ be an up-directed set with the first element $0$ and $((A_{i})_{i\in I} , (f_{ij} )_{i\leq j\in I} )$ be an  $r$-directed family of $S$-acts indexed by $I$. Then  $\pi\circ u_{i}$ is an $r$-monomorphism, where $u_{i}:A_{i}\to \coprod_{i\in I}A_{i}$ is the injection map, for every $i\in I$, and  $\pi: \coprod_{i\in I}A_{i}\to \underrightarrow{lim} (A_{i})_{i\in I} $ is the canonical epimorphism.
\end{theorem}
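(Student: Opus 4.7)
The plan is to separate the claim into two parts: first that $\pi\circ u_i$ is a monomorphism, and second that its image $\pi u_i(A_i)$ is $r$-dense in $\underrightarrow{lim}(A_i)_{i\in I}$. Injectivity is immediate: if $\pi u_i(a)=\pi u_i(a')$ for some $a,a'\in A_i$, then by the explicit description of the direct-limit congruence there exists $k\geq i$ with $u_k f_{ik}(a)=u_k f_{ik}(a')$ in $\coprod_{i\in I}A_i$; since the coproduct injection $u_k$ is itself injective and $f_{ik}$ is a monomorphism by the definition of a directed family, we conclude $a=a'$.

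For $r$-density, I would invoke the equivalence used in the proof of the characterization of $r$-injective $S$-acts (Section~5): a subact $B$ of $A$ is $r$-dense in $A$ if and only if $A/B\in\mathbb{R}_r$. Hence it suffices to prove that $\underrightarrow{lim}(A_i)_{i\in I}/\pi u_i(A_i)\in\mathbb{R}_r$. Since $I$ is up-directed, the subset $J=\{j\in I:j\geq i\}$ is nonempty (it contains $i$) and cofinal in $I$, so the subsystem $((A_j)_{j\in J},(f_{jk})_{j\leq k\in J})$ has the same direct limit as the original system. For every $j\in J$, the $r$-monomorphism $f_{ij}:A_i\to A_j$ yields $A_j/f_{ij}(A_i)\in\mathbb{R}_r$; the compatibility $f_{jk}\circ f_{ij}=f_{ik}$ for $i\leq j\leq k$ makes every $f_{jk}$ descend to a well-defined homomorphism $\bar f_{jk}:A_j/f_{ij}(A_i)\to A_k/f_{ik}(A_i)$, producing a directed family entirely within $\mathbb{R}_r$.

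The crucial identification is that $\underrightarrow{lim}_{j\in J}(A_j/f_{ij}(A_i))$ is isomorphic to $\underrightarrow{lim}(A_i)_{i\in I}/\pi u_i(A_i)$. I would verify this by presenting both sides as explicit quotients of $\coprod_{j\in J}A_j$ and comparing the generators of the defining congruences: on one side we first Rees-collapse each summand along $f_{ij}(A_i)$ and then impose the direct-limit relation; on the other we first impose the direct-limit relation and then Rees-collapse the image of $A_i$. Since Rees collapsing and the direct-limit identification are each given by explicit sets of generators, a routine comparison shows that the two resulting congruences agree. With this identification in hand, Lemma \ref{...} applies: being a radical class, $\mathbb{R}_r$ is closed under direct limits of directed families, hence $\underrightarrow{lim}_{j\in J}(A_j/f_{ij}(A_i))\in\mathbb{R}_r$, and therefore $\underrightarrow{lim}(A_i)_{i\in I}/\pi u_i(A_i)\in\mathbb{R}_r$ as required.

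The main obstacle I foresee is the colimit-commutation step in the preceding paragraph; every other ingredient is a direct application of the hypotheses, namely the $r$-density of each $f_{ij}(A_i)$ in $A_j$ and Lemma \ref{...}. The first-element assumption on $I$ plays no active role beyond ensuring the system is nonempty, so the substance of the argument rests entirely on combining the two mechanisms of $r$-density and closure of radical classes under direct limits.
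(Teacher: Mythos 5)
Your proposal is correct and follows essentially the same route as the paper: both reduce the $r$-density of $\pi\circ u_{i}(A_{i})$ to Lemma \ref{...} by identifying a Rees quotient of the direct limit with the direct limit of the Rees quotients of the $A_{j}$ by the image of a fixed earlier term, these quotients lying in $\mathbb{R}_{r}$ precisely because the connecting maps are $r$-monomorphisms. The only differences are that the paper collapses along the image of $A_{0}$ and then obtains density of each $\pi\circ u_{j}(A_{j})$ from closure of $\mathbb{R}_{r}$ under homomorphic images, whereas you collapse directly along the image of $A_{i}$ over the cofinal subsystem $\{j\in I \mid j\geq i\}$ (which, as you observe, renders the first-element hypothesis superfluous), and that you explicitly check the injectivity of $\pi\circ u_{i}$, which the paper leaves tacit.
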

  \begin{proof}
  Given an $r$-directed family $\mathfrak{D}=((A_{i})_{i\in I} , (f_{ij} )_{i\leq j\in I} )$ of $S$-acts, we get the $r$-directed family $((A_{i}/f_{0i}(A_{0}))_{i\in I} , (\overline{f}_{ij} )_{i\leq j\in I} )$ in $\mathbb{R}_{r}$.  So, Lemma \ref{...} implies that $ \underrightarrow{lim} (A_{i}/f_{j_{0}i}(A_{j_{0}}))_{i\in I}\in \mathbb{R}_{r}$.

  But since
  \begin{align*}
  \underrightarrow{lim} (A_{i}/f_{j_{0}i}(A_{0}))_{i\in I}& \cong   \bigcup_{i\in I} \pi'\circ u_{i}(A_{i}/f_{0i}(A_{0}))\\
  &  \cong ( \bigcup_{i\in I} \pi\circ u_{i}(A_{i}) )/( \pi\circ u_{0}(A_{0}))
  \end{align*}
  where $\pi:\coprod_{i\in I}A_{i}\to \underrightarrow{lim} (A_{i})_{i\in I} $ and $\pi':\coprod_{i\in I}(A_{i}/f_{0i})\to \underrightarrow{lim} (A_{i}/f_{0i}(A_{0}))_{i\in I}    $  are the canonical epimorphisms,  $( \bigcup_{i\in I} \pi\circ u_{i}(A_{i}) )/ \pi\circ u_{j}(A_{j})\in \mathbb{R}_{r}$ follows from the closedness of $\mathbb{R}_{r}$ under homomorphic image, for every $j\in I$. That is  $\pi\circ u_{j}(A_{j})$ is $r$-dense in $ \underrightarrow{lim} (A_{i})_{i\in I}$,  for every $j\in I$ .
  \end{proof}

\begin{remark}
Now, as it is mentioned in \cite{ban}, in the present of conditions $B_{1}$-$B_{6}$, which are stated as follows, we have the well-Behaviour of $r$-injectivy.

$B_{1}$ - The class of $r$-monomorphisms is  composition closed. Because $c^{r}$ is an idempotent closure operator, see Section 2.4 of \cite{Tholen (1995)}.

$B_{2}$ -  The class of $r$-monomorphisms  is trivially isomorphism closed and left regular; that is, for $f \in\mathcal{M}$ with $fg = f$ we have $g$ is an isomorphism.

$B_{3}$ - Banaschewski's $r$-condition, see Theorem \ref{b3}.

$B_{4}$ - \textbf{S-Act} satisfies $r$-transferability conditions, see Lemma \ref{b4}.

$B_{5}$ - \textbf{S-Act} has $r$-direct limit of well ordered direct systems, See Theorem \ref{b5}.

$B_{6}$ - \textbf{S-Act} is $r^{*}$-cowell powered; that is for every $S$-act $A$, the  class
$$\{ m:A\rightarrow B \ | \ B\in \textbf{S-Act},\ m \text{  is an }r\text{-essential monomorphism.}\},$$
 up to isomorphism, is a set. It is trivial.
\end{remark}

\section{Bear criterion for $r$-injectivy}\label{6}

An important point of study in injectivy is to investigate where there is any relation between the desired injectivy and injectivy with respect to another subclass of monomorphisms, the result of which may be called the Bear type criterion. In this section we give the counterpart of Bear-Skornjakov criterion for $r$-injectivy. We also give another criterion to characterize the weakly injective $S$-acts.
We also give a Bear criterion for injective $S$-act in corollary \ref{cor. inj.}.
\begin{theorem}
Let $r$ be a  radical whose  radical class $\mathbb{R}_{r}$ is closed under coproduct. Then

{\rm (i)} every $r$-injective $S$-act contains a zero.

{\rm (ii)} The product $\prod_{i\in I} Q_{i}$ is $r$-injective if and only if $Q_{i}$ is an $r$-injective $S$-act, for all $i\in I$.
\end{theorem}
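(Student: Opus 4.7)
The plan for (i) is to embed $Q$ into a slightly larger $S$-act that visibly contains a zero, show that the embedding is $r$-dense, and then transport the zero back via $r$-injectivity. Concretely, let $\Theta=\{\theta\}$ be a one-element trivial $S$-act and let $\iota:Q\hookrightarrow \Theta\amalg Q$ be the canonical coproduct injection (the case $|Q|\leq 1$ is trivial since $Q$ itself is then a fixed point, so I may assume $|Q|\geq 2$). The Rees factor $(\Theta\amalg Q)/\iota(Q)$ consists of $\theta$ together with the collapsed class $[Q]$, and under the induced action both points are fixed by every $s\in S$; hence it is isomorphic to $\Theta\amalg\Theta$. Trivial $S$-acts lie in $\mathbb{R}_r$ vacuously (since $\nabla=\Delta$ on them), so the hypothesis that $\mathbb{R}_r$ is closed under coproducts forces $\Theta\amalg\Theta\in\mathbb{R}_r$. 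Thus $\iota$ is $r$-dense, i.e., an $r$-monomorphism. Now $r$-injectivity of $Q$ provides a homomorphism $h:\Theta\amalg Q\to Q$ with $h\iota=\mathrm{id}_Q$, and setting $z:=h(\theta)$ the computation $sz=s\,h(\theta)=h(s\theta)=h(\theta)=z$ shows $z$ is a zero of $Q$.

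Part (ii) then splits into two essentially routine directions. For $(\Leftarrow)$, given an $r$-monomorphism $m:A\to B$ and a homomorphism $f:A\to \prod_{i\in I}Q_i$, I extend each component $\pi_i f:A\to Q_i$ along $m$ by $r$-injectivity of $Q_i$ to obtain $g_i:B\to Q_i$, and assemble these into $g:B\to\prod_{i\in I}Q_i$ using the product's universal property; the equality $g\circ m=f$ is checked coordinate-wise.

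For $(\Rightarrow)$, I apply (i) to the $r$-injective $S$-act $\prod_{i\in I}Q_i$ to obtain a zero $(z_i)_{i\in I}$; a component-wise inspection of $s\cdot(z_i)_i=(z_i)_i$ shows each $z_i$ is a zero of $Q_i$. Fix $i_0\in I$; given an $r$-monomorphism $m:A\to B$ and a homomorphism $f:A\to Q_{i_0}$, I define $\tilde f:A\to \prod_{i\in I}Q_i$ by $\pi_{i_0}\tilde f=f$ and $\pi_j\tilde f\equiv z_j$ for $j\neq i_0$; this is a homomorphism because each constant-at-zero map is. Extending $\tilde f$ along $m$ via $r$-injectivity of the product yields $\tilde g:B\to\prod_{i\in I}Q_i$, and the composition $\overline f:=\pi_{i_0}\tilde g:B\to Q_{i_0}$ satisfies $\overline f\circ m=f$.

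The main obstacle, and essentially the only non-formal step, is the identification $(\Theta\amalg Q)/\iota(Q)\cong \Theta\amalg\Theta\in\mathbb{R}_r$ that drives (i); everything else reduces to diagram chasing and the universal property of the product. The coproduct-closure hypothesis on $\mathbb{R}_r$ is invoked precisely at this point, and is reused in the $(\Rightarrow)$ half of (ii) only indirectly, via (i), to extract the zeros of the factors.
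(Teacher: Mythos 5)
Your proposal is correct and follows essentially the same route as the paper: part (i) is exactly the paper's argument (retract $Q$ off $\Theta\amalg Q$ after observing that the Rees factor $(\Theta\amalg Q)/Q\cong\Theta\amalg\Theta$ lies in $\mathbb{R}_r$ by coproduct-closure, so the inclusion is an $r$-monomorphism), with the density step spelled out in more detail than the paper gives. Part (ii) is left to the reader in the paper, and your componentwise extension for $(\Leftarrow)$ together with the zero-section retraction for $(\Rightarrow)$ is the standard argument intended there.
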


\begin{proof}
One can easily  prove the part (ii). To prove part (i), first  we note that $A$ is $r$-dense in $A\amalg \Theta$. Now the result is immediately follows from the following completed commutative diagram, by  $g:A\amalg\Theta\to A$.

\[\xymatrix{
& A  \ar[r]^(.35)\subseteq \ar[d]_{id_{A}}& A\coprod \Theta\ar[ld]^{g}  \\
& A.  \\
 }\]

\end{proof}


\begin{theorem}[Bear-Skornjakov]\label{Bear}
Let $  r$  be a zero-hereditary  radical of {\bf S-Act} and $Q$ be an $S$-act with a zero element $\theta$. Then  $ Q $ is $ r $-injective if and only if each homomorphism $ f:A_{0}\longrightarrow Q $ in which $A_{0}$ is an $ r $-dense subact of a cyclic $S$-act  $A$  can be extended to $A$.
\end{theorem}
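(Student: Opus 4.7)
The forward direction is immediate from Definition \ref{d-r-inj}: a dense inclusion into a cyclic $S$-act is in particular an $r$-monomorphism, so the stated extension property is a special case of $r$-injectivity.

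For the converse my plan is a Zorn/Baer-style argument. Given an $r$-monomorphism $\iota\colon A\hookrightarrow B$ and a homomorphism $f\colon A\to Q$, I consider the poset $\mathcal{P}$ of pairs $(C,g)$ with $A\leq C\leq B$ and $g\colon C\to Q$ extending $f$, ordered by componentwise inclusion and restriction of maps. This poset is non-empty (it contains $(A,f)$) and every chain admits an obvious union as upper bound, so Zorn's lemma yields a maximal element $(A',f')$. I will derive a contradiction from the assumption $A'\subsetneq B$ by enlarging $A'$ by one cyclic subact at a time.

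Fix $x\in B\setminus A'$ and set $A_0:=A'\cap Sx$. If $A_0=\emptyset$, then $A'\cup Sx$ is the disjoint union and I extend $f'$ by sending every $sx$ to the zero $\theta\in Q$; this is a homomorphism since $\theta$ is fixed by the $S$-action. If $A_0\neq\emptyset$, the diamond isomorphism for Rees quotients yields $(A'\cup Sx)/A'\cong Sx/A_0$. The key claim is that $A_0$ is $r$-dense in the cyclic $S$-act $Sx$. Once this is established, applying the hypothesis to $f'|_{A_0}\colon A_0\to Q$ produces an extension $\tilde g\colon Sx\to Q$; since $\tilde g$ agrees with $f'$ on $A_0$, the pieces glue to a well-defined homomorphism on $A'\cup Sx$, contradicting maximality of $(A',f')$.

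The main obstacle, and the only place where the zero-heredity hypothesis is used, is the density claim. Since $A$ is $r$-dense in $B$ we have $B/A\in\mathbb{R}_r$; homomorphic closure of any radical class then gives $B/A'\in\mathbb{R}_r$, so $\Sigma_{r(B/A')}=\{B/A'\}$ and the zero of $B/A'$ is the class of $A'$. The subact $(A'\cup Sx)/A'\leq B/A'$ contains this zero, so zero-heredity of $r$ forces $(A'\cup Sx)/A'\in\mathbb{R}_r$. Combined with the diamond isomorphism this gives $Sx/A_0\in\mathbb{R}_r$, i.e.\ $A_0$ is $r$-dense in $Sx$, which completes the plan.
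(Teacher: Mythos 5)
Your proposal is correct and follows essentially the same route as the paper: a Zorn's lemma argument on partial extensions, splitting on whether $A'\cap Sx$ is empty, and using zero-heredity applied to the radical quotient $B/A'$ (the paper phrases this via the Homomorphism Theorem embedding $Sx/A_0$ into $B/A'$ rather than the diamond isomorphism, but the content is identical). No gaps.
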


\begin{proof}

To prove  it is enough to show that injectivy with respect to $r$-dense subacts of cyclic $S$-acts implies $r$-injectivy, to do so,  we follow the standard prove  of Skornjakov. So assume   $Q$ is  an $S$-act with a zero which satisfies the hypothesis and consider the following diagram

  \begin{align} \tag{$*$}\label{diag}
\xymatrix{
& B  \ar@{^{(}->}[r]^(.35){} \ar[d]_{f}& A  \\
& Q  \\
 }
\end{align}
 in which $B$ is  $r$-dense in $A$. Then  we take the poset
 \[T=\{h:C\to Q\ | \  B\leq C\leq A, \text{ and } h|_{B}=f\}\]
  together the partial order
 \[h_{1}\leq h_{2} \Leftrightarrow \Dom({h_{1}}) \leq \Dom({h_{2}})\text{ and } h_{2}|_{\Dom(h_{1})}=h_{1}\]
 But $\Dom(h)$ is $r$-dense subact of $A$, for every $h\in T$, because $A/\Dom(h)$ is homomorphic image of $A/A_{0}$ and $\mathbb{R}_{r}$ homomorphically closed.  Also one can easily see that every ascending chain $\{h_{i}: C_{i}\to Q\}_{i\in I}$ of $(T,\leq)$ has  the upper bond $h:\bigcup_{i\in I}C_{i}\to Q$ with $h(x)=h_{i}(x)$; where $x\in \Dom(h)$. Hence $T$ has a maximal element such as $h:A_{1}\to Q$, by the Zorn's lemma. Now we show that $A=A_{1}$. To do so, suppose on the  contrary that $A_{1}\lneq A$. Then there exists $a\in A\setminus A_{1}$ for which  we define $D_{a}=A_{1}\cap Sa$. If $D=\emptyset$, then
 \[ \begin{matrix}
 &\overline{f}:& A &\longrightarrow & Q \hspace{25mm} &\\
 & & a & \mapsto & \begin{dcases}
h(a)& a\in  A_{1}\\
\theta & a\in A\setminus A_{1}
\end{dcases} &\\
\end{matrix} \]
is an extension of  $f$ which commutes the diagram \eqref{diag} and we get the result.

If $D\neq \emptyset$ then, $D$ is an $r$-dense subact of $Sa$. Because kernel of the homomorphism  $k:Sa\to A/A_{1}$ defined by $k(sa)=sa/A_{1}$  is $\rho_{D}$. So  Homomorphism Theorem for $S$-acts implies that $Sa/D$ is isomorphic to a subact $H$ of $A/A_{1}$. Now since $r$ is a zero hereditary  radical and $H$ is a subact with a zero element of the radical $S$-act $A/A_{1}$, we have $r(Sa/D)\cong r(H)=\nabla_{H}\cong \nabla_{Sa/D}$.

Therefore there exists an extension $\overline{g}:Sa\to Q$ of the homomorphism $g: D \to Q$ defined by $g(sa)=h(sa)$, for every $sa\in D$. Thus this means that
  \[ \begin{matrix}
 &\overline{h}:& A_{1}\cup Sa &\longrightarrow & Q \hspace{25mm} &\\
 & & x & \mapsto & \begin{dcases}
h(x)& x\in  A_{1}\\
s\overline{g}(a) & x=sa\in Sa
\end{dcases} &\\
\end{matrix} \]
is an extension of $h$ and it contradicts  the maximality of $h$. So $A_{1}=A$ and we are done.
\end{proof}

\begin{corollary}[]\label{C-Bear}
Let $  r$  be a zero-hereditary  radical of {\bf S-Act} and $E$ be an $S$-act with a zero element $\theta$. Then  $ E $ is $ r $-injective if and only if it is injective with respect to the $r$-large monomorphisms into cyclic $S$-acts.
\end{corollary}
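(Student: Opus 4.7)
The forward direction is immediate, since every $r$-large monomorphism into a cyclic $S$-act is in particular an $r$-monomorphism, and $r$-injectivity gives the required extension. The substance lies in the converse, and the plan is to combine Theorem \ref{Bear} (Bear-Skornjakov) with Theorem \ref{b3} (Banaschewski's $r$-condition) to amplify injectivity with respect to $r$-large monomorphisms into cyclic $S$-acts up to full $r$-injectivity.

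By Theorem \ref{Bear}, it suffices to prove that every homomorphism $f:A_{0}\to E$, from an $r$-dense subact $A_{0}$ of a cyclic $S$-act $A$, extends to $A$. So fix such a datum. Applying Theorem \ref{b3} to the inclusion $\iota:A_{0}\hookrightarrow A$ (an $r$-monomorphism by hypothesis), I obtain the canonical epimorphism $g=\pi_{\kappa}:A\to A/\kappa$, where $\kappa$ is a congruence on $A$ maximal with $\rho_{A_{0}}\wedge\kappa=\Delta_{A}$, such that $g\circ\iota:A_{0}\to A/\kappa$ is an $r$-essential monomorphism. Two observations make this factorization useful: first, $A/\kappa$ is still cyclic, since quotients of cyclic $S$-acts are cyclic; second, by Lemma \ref{essential subact}, $g\circ\iota$ really is injective on $A_{0}$, so it restricts to an isomorphism $A_{0}\xrightarrow{\sim}(g\circ\iota)(A_{0})$.

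Transferring $f$ through this isomorphism, I define $\varphi:(g\circ\iota)(A_{0})\to E$ by $\varphi=f\circ(g\circ\iota)^{-1}$. Since $(g\circ\iota)(A_{0})$ is $r$-large in the cyclic $S$-act $A/\kappa$, the hypothesis yields an extension $\bar\varphi:A/\kappa\to E$ of $\varphi$. Then $\bar\varphi\circ g:A\to E$ extends $f$, because for $a\in A_{0}$,
\[(\bar\varphi\circ g)(a)=\bar\varphi\bigl((g\circ\iota)(a)\bigr)=\varphi\bigl((g\circ\iota)(a)\bigr)=f(a).\]
Invoking Theorem \ref{Bear}, $E$ is $r$-injective.

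The only point requiring attention is that the target of the Banaschewski factorization remains cyclic and that $g\circ\iota$ is genuinely a monomorphism; both are essentially free from the construction of $\kappa$ and Lemma \ref{essential subact}. Apart from these bookkeeping checks, the argument is a straightforward reduction: Bear-Skornjakov reduces $r$-injectivity to the cyclic case, and Banaschewski's $r$-condition then upgrades any $r$-dense subact of a cyclic $S$-act to an $r$-large subact of a cyclic $S$-act, without losing the homomorphism $f$ along the way.
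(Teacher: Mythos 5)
Your proof is correct and follows essentially the same route as the paper: reduce via Theorem \ref{Bear} to extending maps from $r$-dense subacts of cyclic $S$-acts, then use Theorem \ref{b3} to pass to an $r$-large subact of the (still cyclic) quotient $A/\kappa$ and pull the extension back along the canonical epimorphism. You are in fact more careful than the paper's own write-up, which leaves implicit both that $A/\kappa$ remains cyclic and that $g\circ\iota$ is injective on $A_{0}$ (your appeal to Lemma \ref{essential subact}).
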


\begin{proof}
One way is clear. To prove converse, using  Theorem \ref{Bear}, we show that every $S$-act with a zero which satisfies the hypothesis is an injective $S$-act with respect to $r$-monomorphisms  into the cyclic $S$-acts. To do so, consider the following  diagram
\[\xymatrix{
& B  \ar[r]^(.45){m} \ar[d]_{f}& C  \\
& E  \\
 }\]
 in which $m$ is an $r$-monomorphism  and $C$ is a cyclic $S$-act. Then, by Theorem \ref{b3}, $m:B\to C$ can be extend to an $r$-large monomorphism $g\circ m :B\to C\to A$. Now existence of a homomorphism $\overline{f}: A\to E$ with $\overline{f}\circ m=f$ follows from hypothesis. Hence we get $\overline{f}|_{C}:C\to E$ which completes the designed diagram.
\end{proof}

\begin{theorem}
Given a hereditary  radical $r$, a semisimple $S$-act $I$ is weakly injective if and only if it is injective relative to all inclusions  into $S/r(S)$.
\end{theorem}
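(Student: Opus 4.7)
The plan is to prove both directions by exploiting the canonical epimorphism $\pi : S \to S/r(S)$ to translate extension problems between the two sides of the equivalence. The guiding observation, valid throughout, is this: since $I$ is semisimple, i.e.\ $r(I) = \Delta_I$, property (i) of a radical forces every homomorphism $h : A \to I$ to satisfy $r(A) \subseteq \ker h$, so $h$ factors uniquely through $A/r(A)$. This is what drives both implications.

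For the forward direction, assume $I$ is weakly injective. Given a subact $B \leq S/r(S)$ and a homomorphism $f : B \to I$, I form the saturated subact $\widetilde{B} = \pi^{-1}(B) \leq S$ and set $g = f \circ \pi|_{\widetilde{B}} : \widetilde{B} \to I$. By weak injectivity, $g$ extends to some $\bar g : S \to I$. The observation above gives $r(S) \subseteq \ker \bar g$, so $\bar g$ descends to $\tilde f : S/r(S) \to I$, and a direct check using $\pi(s) \in B$ for $s \in \widetilde{B}$ shows that $\tilde f$ restricts to $f$ on $B$.

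For the converse, assume $I$ is injective with respect to inclusions into $S/r(S)$, and let $f : B \to I$ be a homomorphism from a subact $B \leq S$. The same observation factors $f$ as $\bar f \circ \pi_B$ with $\pi_B : B \to B/r(B)$. Hereditariness enters decisively here: since $r(B) = r(S) \wedge \Delta_B$, the canonical homomorphism $B/r(B) \to S/r(S)$ induced by the inclusion $B \hookrightarrow S$ is injective, and its image $\{[b]_{r(S)} : b \in B\}$ is plainly a subact of $S/r(S)$. Applying the hypothesis yields an extension $\tilde f : S/r(S) \to I$ of $\bar f$, and then $\tilde f \circ \pi : S \to I$ is the required extension of $f$.

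The only step requiring real care is establishing that $B/r(B)$ embeds as a subact of $S/r(S)$ in the backward direction; without the hereditary hypothesis, $\Delta_B \wedge r(S)$ could strictly exceed $r(B)$ and the reduction would break down. The remaining verifications are routine factorisation checks using property (i) of a radical and the surjectivity of $\pi$.
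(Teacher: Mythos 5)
Your proposal is correct and follows essentially the same route as the paper: both directions factor maps into the semisimple $I$ through the quotient by the radical (using $r(I)=\Delta_I$ to see $r(A)\subseteq\ker h$), and hereditariness is used exactly where you flag it, to identify $B/r(B)$ with a subact of $S/r(S)$. Your forward direction is phrased slightly more carefully than the paper's (you pull back an arbitrary subact $B\leq S/r(S)$ to $\pi^{-1}(B)$ rather than assuming it has the form $K/r(K)$), but the underlying argument is the same.
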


\begin{proof}
$(\Rightarrow)$ For an arbitrary  weakly injective $S$-act  $I$, consider $f'$ to be a homomorphism from a subact $K/r(K)$ of $S/r(S) $ to $I$. Then , by the hypothesis, there exists an extension homomorphism  $\overline{f}:S\to I$ for $f'\circ\pi_{K}=f$ which commutes the left triangle of the following  diagram,
  \[\xymatrix@!0{
  &&K\ar@{^{(}->}[rrr]^(.35){} \ar[lld]_(.55){\pi_{K}} \ar[ddd]_(.55){f} |!{[rd];[d]}\hole  &&& S \ar[lld]_(.55){\pi_{S}} \ar[lllddd]^(.55){\overline{f}}
  \\
   ~~ K/r(K) \ar@{^{(}->}[rrr]   \ar[ddrr]_(.5){f'} &&&  S/r(S) \ar@{.>}[ddl]_(.5){\widehat{f}}
    \\
 \\
               && I &
 }\]
in which $\pi_{S}$ and $\pi_{K}$ are the canonical epimorphisms.  But the property of the  radical, implies $\overline{f}(r(S))\leq r(I)=\Delta_{I}$ and hence $r(S)\leq {\ker}(\overline{f})$. So,  Homomorphism Theorem  for $S$-acts, implies that there exists a homomorphism $\widehat{f}$ from $S/r(S)$ to $I$ which  completes the above diagram and we are done.

  $(\Leftarrow)$ Let $I\in \mathbb{S}_{r}$ be injective relative to all inclusions  into $S/r(S)$, and $f$ be a homomorphism from a left ideal $K$ to $I$. Then by $f(r(K))\subseteq r(I)=\Delta_{I}$ we have $r(K)\leq \ker(f)$. Hence Homomorphism Theorem for $S$-acts implies the  existence of a homomorphism $f'$ from $K/r(K)$ to $I$ such that $f=f'\pi_{K}$ where $\pi_{K}: K\to K/r(K)$ is the canonical epimorphism. Now, by the hypothesis, there exists  a homomorphism $\widehat{f}$ from $S/r(S)$ to $I$ which   commutes the bottom triangle of the following diagram,
 \[\xymatrix@!0{
  &&K\ar@{^{(}->}[rrr]^(.35){} \ar[lld]_(.55){\pi_{K}}  \ar[ddd]_(.55){f} |!{[rd];[d]}\hole &&& S \ar[lld]_(.55){\pi_{S}} \ar[lllddd]^(.55){\overline{f}}
  \\
   ~~ K/r(K) \ar@{^{(}->}[rrr]   \ar[ddrr]_(.5){f'} &&&  S/r(S) \ar[ddl]_(.5){\widehat{f}}
   \\
 \\
               && I &
 }\]

  where $\pi_{S}$ and $\pi_{K}$ are the  canonical epimorphisms. Now    $\overline{f}=\widehat{f}\circ \pi_{S}$  is an extension of $f$ and commutes the desired diagram, meaning that $I$ is weakly injective.
\end{proof}


\section{$r$-injectivy for a Kurosh-Amitsur radical}\label{7}

In this section we discuss $r$-injectivy when $r$ is a Kurosh-Amitsur radical rather than a  radical to improve the results hereof. We then construct an Kurosh-Amitsur radical $r_{G}$ whose associated $r_{G}$-injective $S$-acts are exactly injective $S$-act.
Throughout this section, we assume that  $E(A)$ and $E_{r}(A)$ are   respectively,  the usual injective hull and  $r$-injective hull of the $S$-act $A$.

 \begin{proposition}\label{k. E_r}
 Let $r$ be a Kurosh-Amitsur radical. Then $E_{r}(A)=c^{r}_{E(A)}(A)$, for every $A\in { \bf S\textbf{-}Act}$.
\end{proposition}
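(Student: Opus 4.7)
My strategy is to verify that $c^{r}_{E(A)}(A)$ satisfies the characterizing properties of the $r$-injective hull, namely that it is an $r$-injective, $r$-essential extension of $A$; uniqueness then yields the identification $E_{r}(A)=c^{r}_{E(A)}(A)$.

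First I would observe that $A\leq c^{r}_{E(A)}(A)$ by the extension property (c1) of the closure operator, and that $c^{r}_{E(A)}(A)$ is $r$-injective directly from Corollary \ref{p-inj}(ii). So the nontrivial work is to show that the inclusion $A\hookrightarrow c^{r}_{E(A)}(A)$ is $r$-essential, i.e.\ that $A$ is simultaneously large and $r$-dense in $c^{r}_{E(A)}(A)$ in the sense of Definition \ref{d-r-ess}.

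For $r$-density I would invoke Theorem \ref{weak-here}: since $r$ is Kurosh--Amitsur, the implication chain in Figure \ref{fig} (Kurosh--Amitsur $\Rightarrow$ pre-Kurosh $\Rightarrow$ weakly-hereditary) shows that $r$ is weakly hereditary, so $c^{r}$ is weakly hereditary, which applied to $A\leq E(A)$ yields
\[
c^{r}_{c^{r}_{E(A)}(A)}(A)=c^{r}_{E(A)}(A),
\]
precisely the $r$-density of $A$ in $c^{r}_{E(A)}(A)$. For largeness, $A$ is large in $E(A)$ by definition of the injective hull, and largeness is inherited by any intermediate subact: given $g:c^{r}_{E(A)}(A)\to X$ with $g|_{A}$ injective, one embeds $X$ into its injective hull $E(X)$ and uses injectivity of $E(X)$ to extend $g$ to $\tilde g:E(A)\to E(X)$ whose restriction to $A$ is still injective; essentiality of $A$ in $E(A)$ forces $\tilde g$, and hence $g$, to be a monomorphism.

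Combining these, $A\hookrightarrow c^{r}_{E(A)}(A)$ is an $r$-essential monomorphism into an $r$-injective $S$-act. The standard uniqueness argument behind the Third Well-Behaviour Theorem then identifies $c^{r}_{E(A)}(A)$ with $E_{r}(A)$: by $r$-injectivity of $E_{r}(A)$ one extends the canonical inclusion $A\hookrightarrow E_{r}(A)$ along $A\hookrightarrow c^{r}_{E(A)}(A)$ to a homomorphism $\phi:c^{r}_{E(A)}(A)\to E_{r}(A)$; $r$-essentiality forces $\phi$ to be a monomorphism, while minimality of $E_{r}(A)$ among $r$-injective extensions of $A$ forces $\phi$ to be surjective, giving the desired isomorphism. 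I expect the main obstacle to be the inheritance of largeness along the descent $A\leq c^{r}_{E(A)}(A)\leq E(A)$, which must be argued explicitly as above; every other ingredient is a direct appeal to results already established in the preceding sections.
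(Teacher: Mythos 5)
Your proposal is correct, and it reaches the conclusion by a slightly different final step than the paper. Both arguments share the same core: use Theorem \ref{weak-here} (via Kurosh--Amitsur $\Rightarrow$ weakly hereditary) to get that $A$ is $r$-dense in $c^{r}_{E(A)}(A)$, and observe that largeness of $A$ in $E(A)$ descends to the intermediate subact $c^{r}_{E(A)}(A)$, so that $A\hookrightarrow c^{r}_{E(A)}(A)$ is $r$-essential. Where you diverge is in certifying that this $r$-essential extension is actually the hull: you appeal to Corollary \ref{p-inj}(ii) to get $r$-injectivity of $c^{r}_{E(A)}(A)$ and then use the equivalence ``$r$-injective $+$ $r$-essential $=$ $r$-injective hull'' from the Third Well-Behaviour Theorem, whereas the paper instead proves directly that $c^{r}_{E(A)}(A)$ is a \emph{maximal} $r$-essential extension of $A$: any $r$-essential extension $B$ with $c^{r}_{E(A)}(A)\leq B\leq E(A)$ has $B/A\in\mathbb{R}_{r}$, hence $B/A$ lies in a single class of $\Sigma_{r(E(A)/A)}$, and disjointness of those classes together with $c^{r}_{E(A)}(A)/A=[A]_{r(E(A)/A)}$ forces $B=c^{r}_{E(A)}(A)$. (The paper also splits into three cases according to whether $A$ is $r$-dense, $r$-closed, or neither in $E(A)$; your argument handles all three uniformly, which is cleaner.) Your route buys brevity by recycling Corollary \ref{p-inj}(ii), at the cost of leaning on the full well-behaviour machinery and on the uniqueness of the hull; the paper's route is more self-contained at this point, exhibiting the maximality by a concrete congruence-class computation. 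One small point to make explicit in your largeness step: an $r$-essential extension is in particular a large extension, so any competitor $B$ may be assumed to sit inside $E(A)$ over $A$ --- this is also tacitly used in the paper's maximality argument.
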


\begin{proof}
Given an $S$-act $A$, then considering $A$ as a subact of $E(A)$, three possible cases may occur:

Case (i): $A$ is $r$-dense in $E(A)$, that is $E(A)=c_{E(A)}^{r}(A)$. Then $E(A)$ is a maximal $r$-essential extension of $A$. Therefore $E(A)=c_{E(A)}^{r}(A)$ is $r$-injective hull of $A$.

Case (ii): $A$ is $r$-closed in $E(A)$, that is $A=c_{E(A)}^{r}(A)$. Then $A$ is not $r$-dense in any extension of itself, by lemma \ref{l-r-ext}. and hence $E_{r}(A)=A=c_{E(A)}^{r}(A)$.

Case (iii): $A<c_{E(A)}^{r}(A)<E(A)$. Then, since  $c^{r}$ is weakly hereditary, by Lemma \ref{weak-here},  $A$ is  $r$-dense in $c^{r}_{E(A)}(A)$.  Also $c^{r}_{E(A)}(A)$ is an $r$-essential extension of $ A$ since $A\leq c^{r}_{E(A)}(A)\leq E(A)$. So, to prove, it is enough we verify the maximality of $c^{r}_{E(A)}(A)$ among all $r$-essential extensions of $ A$. To do so, let $B$ be an $r$-essential extension of $ A$ with $c^{r}_{E(A)}(A)\leq B$. Then  $B/A\in \mathbb{R}_{r}$ and $c^{r}_{E(A)}(A)/A\leq B/A \leq E(A)/A$. Thus $\nabla_{B/A}=r(B/A)\leq r(E(A)/A)$. Hence, by Remark \ref{•},   $C/A\in \Sigma_{r(E(A)/A)}$  exists such that $c^{r}_{E(A)}(A)/A\leq B/A\leq C/A$.
But since the subacts in  $\Sigma_{r(E(A)/A)}$ are disjoint and $c^{r}_{E(A)}(A)/A\in \Sigma_{r(E(A)/A)}$  ($c^{r}_{E(A)}/A=[A]_{r(E(A)/A)}$), we have  $c^{r}_{E(A)}(A)/A= B/A= C/A$. Thus $c^{r}_{E(A)}(A)= B$. That is $c^{r}_{E(A)}(A)$ is the maximal $r$-essential extension of $ A$.
\end{proof}
We use the above  proposition to give a characterization of the $r$-injective $S$-acts, see the following corollary.
\begin{corollary}  \label{r-in. clo. inj}
Given  a Kurosh-Amitsur radical $r$, an $S$-act $A$ is  $r$-injective if and only if $A$ is an $r$-closed subact of $E(A)$.
\end{corollary}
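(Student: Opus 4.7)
The plan is to derive both implications directly from results already established in the excerpt, principally Proposition \ref{k. E_r} and Corollary \ref{p-inj}(ii), so the argument should be quite short.

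For the forward direction, I would start from the assumption that $A$ is $r$-injective. The key observation is that whenever $A$ is $r$-injective, it coincides with its own $r$-injective hull $E_r(A)$ (since $A\hookrightarrow A$ is already a minimal $r$-injective extension; no proper $r$-essential enlargement is needed). Then I would invoke Proposition \ref{k. E_r}, which for a Kurosh-Amitsur radical identifies $E_r(A)$ with $c^r_{E(A)}(A)$. Combining these gives $A=c^r_{E(A)}(A)$, which is precisely the statement that $A$ is an $r$-closed subact of $E(A)$.

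For the reverse direction, I would assume $A=c^r_{E(A)}(A)$ and appeal directly to Corollary \ref{p-inj}(ii), which asserts that $c^r_{E(A)}(A)$ is always $r$-injective. Substituting the hypothesis $A=c^r_{E(A)}(A)$ into this conclusion immediately gives that $A$ itself is $r$-injective.

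The main (minor) obstacle to articulate cleanly is the observation in the forward direction that an $r$-injective $S$-act equals its own $r$-injective hull. This is a consequence of the Third well behaviour Theorem stated in the introduction: an $r$-injective $S$-act has no proper $r$-essential extension inside itself and, as any $r$-injective extension, is minimal over itself. Once this is noted, everything else is a direct substitution, and no calculation is required. I would therefore write the proof as two short paragraphs, each citing exactly one earlier result.
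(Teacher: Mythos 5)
Your proposal is correct and follows essentially the same route as the paper: the forward direction identifies $A$ with $E_r(A)$ and applies Proposition \ref{k. E_r}, and the reverse direction invokes the $r$-injectivity of $c^r_{E(A)}(A)$ (the paper cites Theorem \ref{inj-den} directly, you cite its Corollary \ref{p-inj}(ii), which is the same fact). No gaps.
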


\begin{proof}
($ \Rightarrow $) If $A$ be an $r$-injective $S$-act, then  $E_{r}(A)=A$. But since $E_{r}(A)=c^{r}_{E(A)}(A)$, by Proposition \ref{k. E_r}, $A=c^{r}_{E(A)}(A)$. That is,  $A$ is  $r$-closed in $E(A)$.

 \medskip
($ \Leftarrow $)  Immediately follows from Lemma \ref{inj-den}.
\end{proof}
In the following we give a characterization of the hereditary Kurosh-Amitsur radicals by injective hull and $r$-injective hull. But first we recall the lemma bellow  from  \cite{Haddadi} which is used in the sequel.
\begin{lemma}\label{pair kurosh-amitsur}
A pair $(\mathbb{R}, \mathbb{S})$ of subclasses of $S$-acts is the radical class and the semisimple class of a Kurosh-Amitsur radical $r$ if and only if
\begin{enumerate}
\item  $\mathbb{R}\cap \mathbb{S}$ consists of trivial $S$-acts,
\item  $\mathbb{R}$ is homomorphically closed,
\item  $\mathbb{S}$ is closed under taking subacts,
\item  every $S$-act $A$ has  an $\mathbb{R}$-$system$ such as $\Sigma$ whose Rees factor, $A/\rho_{_{\Sigma}}$, belong to $\mathbb{S}$.
\end{enumerate}
\end{lemma}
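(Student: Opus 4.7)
The plan is to establish the two directions of the equivalence separately.

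For the \emph{only if} direction, assume $r$ is a Kurosh-Amitsur radical with radical class $\mathbb{R}_r$ and semisimple class $\mathbb{S}_r$. Conditions (2) and (3) are precisely the homomorphic closure of $\mathbb{R}_r$ and the subact-closure of $\mathbb{S}_r$ already recorded in the introduction. Condition (1) is immediate: if $r(A)=\nabla_A=\Delta_A$ simultaneously, then $|A|\leq 1$. For condition (4), set $\Sigma:=\Sigma_{r(A)}$. The Kurosh-Amitsur property yields $r(B)=\nabla_B$ for each $B\in\Sigma$, so $\Sigma\subseteq\mathbb{R}_r$; meanwhile $A/\rho_\Sigma=A/r(A)\in\mathbb{S}_r$ by Hoehnke's axiom (ii).

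For the \emph{if} direction, given $(\mathbb{R},\mathbb{S})$ satisfying (1)--(4), I would define $r(A):=\rho_\Sigma$ for a system $\Sigma$ supplied by condition (4). The first task is to show this is well-defined, i.e.\ that such a $\Sigma$ is uniquely determined by $A$. The key observation is that if $B\leq A$ is any non-trivial subact with $B\in\mathbb{R}$, then its image $\pi_\Sigma(B)$ in $A/\rho_\Sigma$ is a homomorphic image of $B$, hence in $\mathbb{R}$ by (2), and is also a subact of $A/\rho_\Sigma\in\mathbb{S}$, hence in $\mathbb{S}$ by (3). Condition (1) then forces $\pi_\Sigma(B)$ to be trivial, so $B$ is contained in a single $\rho_\Sigma$-class, which (being non-trivial) must itself belong to $\Sigma$. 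Consequently $\Sigma$ coincides with the family of maximal non-trivial $\mathbb{R}$-subacts of $A$, and is unique.

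Once $r$ is unambiguously defined, I need to verify: (a) Hoehnke's homomorphism axiom --- any $f:A\to C$ sends each $B\in\Sigma$ to a member of $\mathbb{R}$ by (2), which by the uniqueness argument just given must lie inside a member of the corresponding system for $C$, so $f$ induces a homomorphism $r(A)\to r(C)$; (b) Hoehnke's second axiom $r(A/r(A))=\Delta_{A/r(A)}$, which follows because $A/\rho_\Sigma\in\mathbb{S}$ admits only trivial $\mathbb{R}$-subacts by (1) and (3), so its defining system is empty; (c) the Kurosh-Amitsur conditions --- $r(A)=\rho_\Sigma$ is by construction a Rees congruence, and for each $B\in\Sigma$ the hypothesis $B\in\mathbb{R}$ together with the uniqueness argument forces $\Sigma_{r(B)}=\{B\}$, i.e.\ $r(B)=\nabla_B$. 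The equalities $\mathbb{R}_r=\mathbb{R}$ and $\mathbb{S}_r=\mathbb{S}$ then read off directly: $A\in\mathbb{R}$ makes $\Sigma=\{A\}$ admissible, giving $r(A)=\nabla_A$; and $A\in\mathbb{S}$ makes $\Sigma=\emptyset$ admissible, giving $r(A)=\Delta_A$. The main obstacle is the uniqueness step, where condition (1) is indispensable to prevent a non-trivial $\mathbb{R}$-subact from surviving inside a semisimple quotient; without it the assignment $A\mapsto\rho_\Sigma$ could genuinely depend on the choice of $\Sigma$ and the whole construction would collapse.
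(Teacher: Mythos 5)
The paper does not actually prove this lemma; it is recalled verbatim from the reference [Haddadi, \emph{On radical and torsion theory in the category of $S$-acts}], so there is no in-paper argument to compare against. Your proposal is, however, essentially the standard proof of this characterization and I find it correct. The ``only if'' direction is routine, as you say, and the heart of the ``if'' direction is exactly where you put it: conditions (1)--(3) force the image of any non-trivial $\mathbb{R}$-subact in $A/\rho_\Sigma$ to collapse, so every such subact sits inside a single $\rho_\Sigma$-class, whence $\Sigma$ is precisely the family of maximal non-trivial $\mathbb{R}$-subacts and the assignment $A\mapsto\rho_\Sigma$ is well defined. Two small points deserve one extra line each if you write this up. First, in verifying Hoehnke's first axiom you assert that $f(B)$ ``must lie inside a member of the corresponding system for $C$''; this fails when $f(B)$ is trivial, but the axiom still holds there because a pair $(a,a')\in\rho_{\Sigma_A}$ with $a,a'\in B$ is then sent to a diagonal pair, so the induced map on congruences exists in either case. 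Second, the identities $\mathbb{R}_r=\mathbb{R}$ and $\mathbb{S}_r=\mathbb{S}$ need the convention that both classes contain (isomorphic copies of) the trivial $S$-acts: for $\mathbb{S}$ this follows from (4) applied to a trivial act, while for $\mathbb{R}$ it follows from (2) provided $\mathbb{R}\neq\emptyset$ and ``homomorphically closed'' is read as containing all homomorphic images; the genuinely degenerate case $\mathbb{R}=\emptyset$ is excluded by the ambient conventions rather than by (1)--(4) themselves. Neither point affects the substance of your argument.
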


\begin{theorem}\label{6k.h.}
For a Kurosh-Amitsur radical $r$ of  {\bf S-Act}, the following conditions are equivalent.
\item[\rm (1)] The radical $r$ is hereditary.
\item[\rm (2)]  Given an $S$-act $B$, the homomorphic image $B$ under a homomorphism $f$ is a radical $S$-act  if and only if there exists an extension  $A$ of $B$ such that  $\nabla_{B}\subseteq \pi^{-1}(r(A/\ker(f)\vee\Delta_{A}))$ where $\pi: A\to A/(\ker(f)\vee\Delta_{A})$ is the canonical epimorphism.
\item[\rm (3)] The radical class $\mathbb{R}_{r}$ is closed under taking subacts.
\item[\rm (4)] The semisimple class $\mathbb{S}_{r}$ is closed under $r$-injective hulls.
\item[\rm (5)] The semisimple class $\mathbb{S}_{r}$ is closed under injective hulls.
\item[\rm (6)] The semisimple class $\mathbb{S}_{r}$ is closed under essential Rees extensions.
\end{theorem}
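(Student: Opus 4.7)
The plan is to establish the six equivalences in two blocks that are essentially independent: (1) $\Leftrightarrow$ (2) $\Leftrightarrow$ (3) rephrase the hereditary property purely in terms of the radical/semisimple classes, while (3) $\Leftrightarrow$ (4) and (3) $\Leftrightarrow$ (5) $\Leftrightarrow$ (6) tie subact closure to closure under extension operations. For (1) $\Leftrightarrow$ (3): the forward direction is routine, since $A \in \mathbb{R}_{r}$ and $B \leq A$ give $r(B) = r(A) \wedge \nabla_{B} = \nabla_{B}$. Conversely, using the Kurosh--Amitsur structure $r(A) = \rho_{\Sigma_{r(A)}}$, each $X \in \Sigma_{r(A)}$ lies in $\mathbb{R}_{r}$, so each subact $X \cap B$ is in $\mathbb{R}_{r}$ by (3), and by Remark~\ref{•} sits inside some class of $\Sigma_{r(B)}$, yielding the non-trivial inclusion $r(A) \wedge \nabla_{B} \leq r(B)$. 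For (2) $\Leftrightarrow$ (3), the containment $\nabla_{B} \subseteq \pi^{-1}(r(A/(\ker f \vee \Delta_{A})))$ unpacks as saying that the image of $B$ in the quotient lies inside a single $r$-class---which is in $\mathbb{R}_{r}$ by the K-A property---so (3) translates this into $f(B) \in \mathbb{R}_{r}$; the converse is immediate by taking $A := B$, making the condition trivially satisfied.

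For the second block, (5) $\Leftrightarrow$ (6) is routine since any essential extension of $A$ embeds into $E(A) = E(B)$ and $\mathbb{S}_{r}$ is closed under subacts. The implications (3) $\Rightarrow$ (4) and (3) $\Rightarrow$ (5) share one argument: assuming (3), if $A \in \mathbb{S}_{r}$ and $X$ is a hypothetical non-trivial member of $\Sigma_{r(E(A))}$ (or, by Proposition~\ref{k. E_r}, of $\Sigma_{r(E_{r}(A))}$ using $E_{r}(A) = c^{r}_{E(A)}(A) \leq E(A)$), then largeness of $A$ in $E(A)$ forces $|A \cap X| \geq 2$; this subact is in $\mathbb{R}_{r}$ by K-A and in $\mathbb{S}_{r}$ as a subact of $A$, so (3) puts it in $\mathbb{R}_{r} \cap \mathbb{S}_{r}$ and therefore it is trivial, a contradiction.

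The hard part---and the one I would do last---is the reverse implications (4) $\Rightarrow$ (3) and (5) $\Rightarrow$ (3), which close both cycles. The critical observation is that whenever $X \leq A \in \mathbb{R}_{r}$, the quotient $A/X$ is a homomorphic image of a radical $S$-act and hence lies in $\mathbb{R}_{r}$, making $X \hookrightarrow A$ automatically an $r$-monomorphism. Assuming (4) and supposing for contradiction $r(X) \neq \nabla_{X}$, the non-trivial $X/r(X) \in \mathbb{S}_{r}$ has $E_{r}(X/r(X)) \in \mathbb{S}_{r}$ by (4); composing $X \twoheadrightarrow X/r(X) \hookrightarrow E_{r}(X/r(X))$ and extending across $X \hookrightarrow A$ by $r$-injectivity of the target yields $\tilde{f} : A \to E_{r}(X/r(X))$ whose image lies simultaneously in $\mathbb{R}_{r}$ (as a homomorphic image of $A$) and in $\mathbb{S}_{r}$ (as a subact of the target), hence is trivial, yet contains the non-trivial $X/r(X)$, a contradiction. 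The identical argument with $E$ and ordinary injectivity in place of $E_{r}$ and $r$-injectivity gives (5) $\Rightarrow$ (3), completing the cycle.
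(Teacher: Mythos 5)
Your overall architecture is sound and in places genuinely different from the paper's: the paper runs a single cycle $(1)\Rightarrow(2)\Rightarrow(3)\Rightarrow(4)\Rightarrow(5)\Rightarrow(6)\Rightarrow(1)$ and closes it by citing Wiegandt's results for $(6)\Rightarrow(1)$, whereas you close the cycles yourself with the direct implications $(4)\Rightarrow(3)$ and $(5)\Rightarrow(3)$. That argument --- $X\le A\in\mathbb{R}_r$ forces $A/X\in\mathbb{R}_r$, so $X\hookrightarrow A$ is an $r$-monomorphism; if $r(X)\ne\nabla_X$ you extend $X\twoheadrightarrow X/r(X)\hookrightarrow E_r(X/r(X))$ over $A$ and land in $\mathbb{R}_r\cap\mathbb{S}_r$, contradicting non-triviality of $X/r(X)$ --- is correct and arguably cleaner than the paper's route; your direct $(1)\Leftrightarrow(3)$ via $r(A)=\rho_{\Sigma_{r(A)}}$ and the fact that a radical subact of $B$ sits inside a single class of $\Sigma_{r(B)}$ is also fine, as is the shared largeness argument for $(3)\Rightarrow(4)$ and $(3)\Rightarrow(5)$.

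There are, however, two gaps. First, you never prove $(2)\Rightarrow(3)$: your paragraph on $(2)\Leftrightarrow(3)$ establishes both halves of the biconditional \emph{inside} statement (2) under the assumption of (3), so (2) is derived from (3) but nothing is derived from (2), and condition (2) is left dangling. The fix is one line (and is the paper's): for $B\le A\in\mathbb{R}_r$ apply (2) to the inclusion $f=i\colon B\to A$, noting $\nabla_B\subseteq\nabla_A=r(A)=\pi^{-1}(r(A/\Delta_A))$, to conclude $B=f(B)\in\mathbb{R}_r$. Second, and more seriously, $(5)\Rightarrow(6)$ is not routine in the sense you describe. An essential Rees extension here is governed by a \emph{system} $\Sigma=\{A_i\}$ of disjoint subacts with $\rho_\Sigma$ essential (equivalently, a collectively large family); no single $A_i$ need be large in $A$, so ``$A$ embeds into $E(A)=E(B)$'' does not apply. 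You need either the paper's argument (if $A\notin\mathbb{S}_r$, essentiality of $\rho_\Sigma$ gives a non-trivial subact $B$ with $\rho_B\le\rho_\Sigma\wedge r(A)$, hence $B\in\mathbb{S}_r$ sitting inside some $C\in\Sigma_{r(A)}\subseteq\mathbb{R}_r$; extending $B\hookrightarrow E(B)$ over $C$ produces a trivial image containing the non-trivial $B$), or a collective-largeness argument embedding $A$ into $\prod_{i}E(A_i)\in\mathbb{S}_r$. Your singleton picture does cover $(6)\Rightarrow(5)$, which only needs $\Sigma=\{A\}$.
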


\begin{proof}
$(1) \Rightarrow (2)$ Necessity: Follows from  Homomorphism Theorem, for $S$-acts, when we take $B=A$.

Sufficiency: From Lemma \ref{l-r-ex}, we know that the hypothesis implies\linebreak $\nabla_{B} \subseteq \pi^{-1}(r(E(B)/(\ker(f)\vee\Delta_{E(B)})))$, where $E(B)$ is the injective hull of $B$ and $\pi:E(B)\to E(B)/(\ker(f)\vee \Delta_{E(B)})$ is the canonical epimorphism. So, we have $\nabla_{B/\ker(f)} \subseteq r(E(B)/(\ker(f)\vee\Delta_{E(B)})$. Hence
\[r(f(B))\cong r(\frac{B}{\ker(f)})=r(\frac{E(B)}{\ker(f)\vee\Delta_{E(B)}})\wedge \nabla _{\frac{B}{\ker(f)}}=\nabla_{\frac{B}{\ker(f)}}\cong\nabla _{f(B)}\]
 since $r$ is hereditary.

 \medskip
$(2)\Rightarrow (3)$ Suppose $A\in \mathbb{R}_{r}$ and $i: B\to A $ is the inclusion map. Then \linebreak $\nabla_{B}= \nabla_{i(B)}\subseteq \nabla_{A}=r(A)= r(A/\Delta_{A})$. Hence hypothesis implies that  $B\in \mathbb{R}_{r} $.

 \medskip
$(3)\Rightarrow (4)$  To prove, we show that  the $r$-injective hull $E_{r}(A)$ of each semisimple $S$-act $A$ is a semisimple $S$-act. Indeed,  The  largeness of $A$ in $E_{r}(A) $ implies that \linebreak$A \cap X\neq \emptyset$, for every non-trivial subact  $X\in \Sigma_{r(E_{r}(A))}$. But we know that $A\in \mathbb{S}_{r}$, $X\in \mathbb{R}_{r}$ and both $\mathbb{S}_{r}$ and $\mathbb{R}_{r}$ are closed under taking subacts. So, we have $A \cap X\in \mathbb{R}_{r}\cap \mathbb{S}_{r}$. Hence $A\cap X$ is a trivial $S$-act since $\mathbb{R}_{r}\cap \mathbb{S}_{r}$  consists of the trivial $S$-acts. Thus $X$ is a trivial $S$-act which means $\Sigma_{r(E_{r})}=\emptyset$. Therefore $E_{r}(A)\in \mathbb{S}_{r}$ since $r$ is a Kurosh-Amitsur radical.

\medskip
$(4) \Rightarrow (5)$  To prove, we show that  the injective hull $E(A)$ of each semisimple $S$-act $A$ is a semisimple $S$-act. Indeed,  The  largeness of $E_{r}(A)$ in $E(A) $ implies that $E_{r}(A) \cap \pi^{-1}(X)\neq \emptyset$, for every non-trivial subact  $X\in \Sigma_{r(E(A)/E_{r}(A))}$  and the canonical epimorphism $\pi: E(A)\to E(A)/E_{r}(A)$. Thus $X= [E_{r}(A)]_{r(E(A)/E_{r}(A)}$, for every non-trivial subact  $X\in \Sigma_{r(E(A)/E_{r}(A))}$,  since $X$ and $[E_{r}(A)]_{r(E(A)/E_{r}(A)}$ are $r(E(A)/E_{r}(A))$-classes. But $[E_{r}(A)]_{r(E(A)/E_{r}(A))}$ is  singleton since $E_{r}(A)$ is $r$-closed in $E(A)$. So,  $X$ is a trivial $S$-act. Thus $\Sigma_{r(E(A)/E_{r}(A))}$ is empty, and hence $E(A)/E_{r}(A)$ belongs to $\mathbb{S}_{r}$ since $r$ is a Kurosh-Amitsur radical. Also $E_{r}(A)\in \mathbb{S}_{r}$, by hypothesis. Therefore $E(A)\in \mathbb{S}_{r}$ follows form  the closedness of $\mathbb{S}_{r}$  under Rees congruence extension. This means that $\mathbb{S}_{r}$ is closed under injective hulls.

 \medskip
$(5)\Rightarrow (6)$ Suppose $\rho$ is an essential  Rees congruence on an $S$-act $A$ with $\Sigma_{\rho}\in \mathbb{S}_{r}$. We Show that $A\in \mathbb{S}_{r}$. To do so, we contrary assume on the $A\notin \mathbb{S}_{r}$. Then $r(A)\neq \Delta_{A}$ and $\rho\cap r(A)\neq \Delta_{A}$ follows from   essentiality of $\rho$. Thus there exists a non-trivial subact $B\leq C\in \Sigma_{r(A)}$ such that $\rho_{B}\leq  \rho\cap r(A)$ since $r(A)$ and $\rho$ are Rees congruence. The closedness of $\mathbb{S}_{r}$ under taking subacts implies that $B\in \mathbb{S}_{r}$, and we have $E(B)\in \mathbb{S}_{r}$, by hypothesis. Now consider the following commutative diagram.
\[\xymatrix{
& B  \ar[r]^(.35)\subseteq \ar[d]_{\subseteq}& C \ar[ld]^f  \\
& E(B)  \\
 }\]
 We note that $f(C)\in \mathbb{S}_{r}\cap \mathbb{R}_{r}$, since $C\in \mathbb{R}_{r}$ and $E(B)\in \mathbb{S}_{r}$.
  Thus, by Lemma \ref{pair kurosh-amitsur},  $f(C)$ is a trivial $S$-act. The commutativity of the above diagram implies that $B$ is trivial and this is a contradiction. Therefore $\mathbb{S}_{r}$ is closed under essential Rees extension.

 \medskip
$(6)\Rightarrow (1)$ Proposition 3.3 of \cite{Wiegandt (2006)} implies that $\mathbb{R}_{r}$ is closed under taking subacts and this implies (1) by Proposition 4.1 of \cite{Wiegandt (2006)}.

\end{proof}

\begin{theorem}
Given  a Kurosh-Amitsur radical $r$, an $r$-injective $S$-act $A$ is a semisimple $S$-act if and only if $E(A)$ is a semisimple $S$-act.
\end{theorem}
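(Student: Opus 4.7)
The biconditional has an easy direction and a slightly more involved direction, and my plan is to treat them separately. For the converse ($E(A)$ semisimple implies $A$ semisimple), I would simply invoke that $\mathbb{S}_r$ is closed under taking subacts (property (3) in the characterization of semisimple classes) together with $A \leq E(A)$; note that $r$-injectivity of $A$ is not needed here.

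For the forward direction, suppose $A$ is $r$-injective and $A \in \mathbb{S}_r$. By Corollary \ref{r-in. clo. inj}, $A$ is an $r$-closed subact of $E(A)$, i.e.\ $c^r_{E(A)}(A) = A$. Unwinding the definition of $c^r$, this says $[A]_{r(E(A)/A)} = \{\theta\}$, where $\theta$ denotes the image of $A$ in $E(A)/A$ (a zero element since $A$ is a subact). I would then aim to show $E(A)/A \in \mathbb{S}_r$, after which the fact that $\mathbb{S}_r$ is closed under congruence extension applied to the Rees congruence $\rho_A$, with $\Sigma_{\rho_A}=\{A\} \subseteq \mathbb{S}_r$ and $E(A)/\rho_A \in \mathbb{S}_r$, yields $E(A) \in \mathbb{S}_r$.

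To prove $E(A)/A \in \mathbb{S}_r$, I would show $\Sigma_{r(E(A)/A)} = \emptyset$; then Kurosh-Amitsur yields $r(E(A)/A) = \rho_\emptyset = \Delta$. Suppose for contradiction $Y \in \Sigma_{r(E(A)/A)}$; then $Y$ is a non-trivial subact of $E(A)/A$. The assumption $[A]_{r(E(A)/A)} = \{\theta\}$ forces $\theta \notin Y$ (else $Y$ would coincide with the trivial $\theta$-class). Consider $\pi^{-1}(Y) \subseteq E(A)$, where $\pi\colon E(A) \to E(A)/A$ is the canonical epimorphism. Since $\theta \notin Y$, one checks that $\pi^{-1}(Y) \cap A = \emptyset$, that $\pi^{-1}(Y)$ is closed under the $S$-action (any $s\cdot x$ with $x \in \pi^{-1}(Y)$ cannot land in $A$, for otherwise $s\cdot [x] = \theta \in Y$), and that $|\pi^{-1}(Y)| = |Y| \geq 2$ because $\pi$ restricts to a bijection on $E(A) \setminus A$. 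So $\pi^{-1}(Y)$ is a non-trivial subact of $E(A)$ disjoint from $A$. But $A$ is large in $E(A)$, hence $\cap$-large by Lemma \ref{cap-large}, forcing $|A \cap \pi^{-1}(Y)| \geq 2$, a contradiction.

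The main subtlety, and the step I expect to need care with, is the translation between the $r(E(A)/A)$-class structure on the quotient and the subact structure on $E(A)$: specifically, verifying that a non-trivial class $Y$ not meeting $\theta$ pulls back to a genuine non-trivial subact of $E(A)$ contained in $E(A) \setminus A$. Once that is established, $\cap$-largeness shuts the door, $E(A)/A$ lands in $\mathbb{S}_r$, and closure under congruence extension finishes the proof.
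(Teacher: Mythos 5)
Your proof is correct, but it routes the forward direction differently from the paper. The paper stays inside $E(A)$: from Corollary \ref{r-in. clo. inj} it gets that $A$ is $r$-closed in $E(A)$, then applies Corollary \ref{k.-c.} (heredity of a Kurosh--Amitsur radical along $r$-closed subacts) to conclude $\Delta_A=r(A)=r(E(A))\wedge\nabla_A$, so any non-trivial class $B\in\Sigma_{r(E(A))}$ meets $A$ in at most one point; largeness of $A$ (via Lemma \ref{cap-large}) forces $|A\cap B|\geq 2$, so $\Sigma_{r(E(A))}=\emptyset$ and $r(E(A))=\Delta_{E(A)}$ directly. You instead pass to the quotient: $r$-closedness gives $[A]_{r(E(A)/A)}=\{\theta\}$, you kill $\Sigma_{r(E(A)/A)}$ by pulling a putative non-trivial class back to a non-trivial subact of $E(A)$ disjoint from $A$ (again contradicting $\cap$-largeness), and then recover $E(A)\in\mathbb{S}_r$ from $E(A)/A\in\mathbb{S}_r$ and $A\in\mathbb{S}_r$ via closure of semisimple classes under congruence extensions. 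Both arguments lean on the same two pillars (the $r$-closedness criterion and $\cap$-largeness of $A$ in its injective hull), but yours trades Corollary \ref{k.-c.} for the congruence-extension axiom, which makes it slightly longer yet arguably more self-contained, since it uses only the definition of $c^r$ and the axiomatics of semisimple classes; the paper's version is shorter and uses semisimplicity of $A$ up front rather than at the end. Your pullback verification (that a class missing $\theta$ lifts to a subact of $E(A)$ avoiding $A$ of the same cardinality) is the only step the paper does not need, and it is carried out correctly. The backward direction is identical in both.
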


\begin{proof}
$(\Rightarrow)$ Let $A$ be a semisimple $r$-injective $S$-act. Then $A$ is $r$-closed in $E(A)$, by Corollary \ref{r-in. clo. inj}. Also $\Delta_{A}=r(A)=r(E(A))\wedge \nabla_{A}$, by Theorem \ref{k.-c.}.  Hence $\rho_{A\cap B}\leq r(E(A))\wedge \nabla_{A}=\Delta_{A}$, for all non-trivial subact $B\in \Sigma_{r(E(A))}$. Thus $|A\cap B|\leq 2$. But, for all non-trivial subact $B\in \Sigma_{r(E(A))}$, we have  $|A\cap B|\geq 2$  since  $A$ is large in $E(A)$. So every $B\in \Sigma_{r(E(A))}$ is a trivial $S$-act. Hence $ \Sigma_{r(E(A))}=\emptyset$. Therefore $r(E(A))=\Delta_{E(A)}$ since $r$ is a Kurosh-Amitsur radical. That is $E(A)\in \mathbb{S}_{r}$.

 \medskip
$(\Leftarrow)$ It follows from the closedness of $\mathbb{S}_{r}$ under taking subact.
\end{proof}

\begin{theorem}
Let $r$ be a  Kurosh-Amitsur radical. Then
\item[{\rm(1)}] the radical class $\mathbb{R}_{r}$ is closed under $r$-injective hulls.
\item[{\rm(2)}]  Every $B\in\Sigma_{r(A)}$ is  $r$-injective  if $A$ is an $r$-injective $S$-act.
\end{theorem}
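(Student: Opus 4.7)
The plan is to derive (2) as a consequence of (1) by combining the $r$-injective hull description $E_r(X) = c^r_{E(X)}(X)$ from Proposition~\ref{k. E_r} with the $r$-injectivity of $A$.

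For (1), fix $A \in \mathbb{R}_r$. I would first observe that since $r$ is Kurosh-Amitsur, it is pre-Kurosh and hence weakly hereditary (Figure~\ref{fig}), so $c^r$ is weakly hereditary by Theorem~\ref{weak-here}; therefore $A$ is $r$-dense in $c^r_{E(A)}(A) = E_r(A)$, which gives $E_r(A)/A \in \mathbb{R}_r$. When $A$ is a non-trivial subact of $E_r(A)$, the Rees congruence $\rho_A$ on $E_r(A)$ satisfies $\Sigma_{\rho_A} = \{A\} \subseteq \mathbb{R}_r$ and $E_r(A)/\rho_A = E_r(A)/A \in \mathbb{R}_r$; the Rees-extension axiom for radical classes then delivers $E_r(A) \in \mathbb{R}_r$. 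The trivial case $|A|\leq 1$ has to be treated directly: since $E(A)/A \cong E(A)$, $E_r(A)$ is exactly the $r(E(A))$-class of the unique point of $A$, which is either a singleton (hence trivial) or an element of $\Sigma_{r(E(A))}$, and so lies in $\mathbb{R}_r$ by the Kurosh-Amitsur condition.

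For (2), let $A$ be $r$-injective and $B \in \Sigma_{r(A)}$. By the Kurosh-Amitsur defining condition $B \in \mathbb{R}_r$, and by (1) applied to $B$ we obtain $E_r(B) \in \mathbb{R}_r$. The canonical $r$-essential monomorphism $B \hookrightarrow E_r(B)$ extends $\mathrm{id}_B : B \to A$ to a homomorphism $\phi : E_r(B) \to A$ by $r$-injectivity of $A$; since $B$ is large in $E_r(B)$ (Definition~\ref{d-r-ess}) and $\phi|_B = \mathrm{id}_B$ is a monomorphism, $\phi$ is itself a monomorphism. Thus $\phi(E_r(B))$ is a subact of $A$ lying in $\mathbb{R}_r$ and containing $B$. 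Remark~\ref{•}(i) places $\phi(E_r(B))$ inside some $X \in \Sigma_{r(A)}$; since $B \leq \phi(E_r(B)) \leq X$ and $B$ is itself a non-empty class of $r(A)$, the disjointness of the classes in $\Sigma_{r(A)}$ forces $X = B$. Hence $\phi(E_r(B)) = B$, so $E_r(B) = B$ after identification, and $B$ is $r$-injective.

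The main obstacle I expect is the embedding step in (2): I need to know that $\phi$ does not merely map into $A$ but injects, and that its image sits inside a single class of $r(A)$, in order to use Remark~\ref{•}(i) together with the disjointness of $\Sigma_{r(A)}$. Both points rest on $B$ being large (not just $r$-dense) in $E_r(B)$, which is built into the definition of $r$-essential monomorphism. A secondary nuisance is the trivial case of $A$ in (1), where the Rees-extension axiom becomes vacuous and must be replaced by the direct Kurosh-Amitsur argument sketched above.
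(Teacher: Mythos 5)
Your proof is correct and follows essentially the same route as the paper's: part (1) via weak heredity of $c^{r}$ (through the Kurosh-Amitsur $\Rightarrow$ weakly hereditary implication and Theorem~\ref{weak-here}) and the Rees-extension axiom applied to $A\leq E_{r}(A)=c^{r}_{E(A)}(A)$, and part (2) by embedding $E_{r}(B)$ into $A$ using $r$-injectivity together with essentiality and then invoking the disjointness of the classes in $\Sigma_{r(A)}$ to force $E_{r}(B)=B$. The only differences are cosmetic: you make explicit the injectivity of the extension $\phi$ and the trivial case of $A$ in (1), both of which the paper leaves implicit.
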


\begin{proof}
\textbf{(1)}  By Proposition   \ref{k. E_r}, we have $E_{r}(A)=c^{r}_{E(A)}(A)$, for $A\in$ \textbf{S-Act}. So, to prove, it is enough to show that  $c^{r}_{E(A)}(A)\in \mathbb{R}_{r}$, for every $A\in \mathbb{R}_{r}$. But,  since $c^{r}$ is weakly hereditary, see Lemma \ref{weak-here}, we have $c^{r}_{E(A)}(A)=c^{r}_{c^{r}_{E(A)}(A)}(A)=\pi^{-1}([A]_{r(c^{r}_{E(A)}(A)/A)})$, for the canonical epimorphism $\pi:c^{r}_{E(A)}(A)\to c^{r}_{E(A)}(A)/A)$. Hence $c^{r}_{E(A)}(A)/A=[A]_{r(c^{r}_{E(A)}(A)/A)}$. Also since $\pi(A)$ is a zero element of $c^{r}_{E(A)}(A)/A$,  $[A]_{r(c^{r}_{E(A)}(A)/A)}\in \Sigma_{r(c_{E(A)}^{r}(A)/A)}$. Therefore  $r(c^{r}_{E(A)}(A)/A)=\nabla_{c^{r}_{E(A)}(A)/A}$. That is  $c^{r}_{E(A)}(A)\in \mathbb{R}_{r}$.

 \medskip
\textbf{(2)}  Let $A$ be an $r$-injective $S$-act and  $B\in \Sigma_{r(A)}$. Then we show that $E_{r}(B)=B$. Indeed, $E_{r}(B)\in \mathbb{R}_{r}$, since $B\in \Sigma_{r(A)}\subseteq \mathbb{R}_{r}$ and $\mathbb{R}_{r} $ is closed under taking $r$-injective hull, by the  former part.   So $E_{r}(B) \leq A$ since $A$ is an $r$-injective $S$-act, containing $B$. Thus there exists $C\in \Sigma_{r(A)}$ such that $E_{r}(B)\leq C$ since $\nabla_{E_{r}(A)}=r(E_{r}(B))\leq r(A)_{\upharpoonright_{E_{r}(B)}}$ and $r$ is a  Kurosh-Amitsur radical. Now $B=E_{r}(B)=C$ follows from this fact that the subacts in $\Sigma_{r(A)}$ are disjoint and $B\leq E_{r}(A)\leq C$.
\end{proof}

In the sequel we are going to define a Kurosh-Amitsur radical $r_{G}$ such that the injective $S$-acts, with respect to $r_{G}$-monomorphisms are exactly the injective $S$-acts.
\medskip

For every  $S$-act $A$ and a zero element $\theta $ of $A$, we  define
\[X_{\theta}:=\bigcup\{C_{\theta}\ |\ C_{\theta}\text{ is a cyclic subact of } A \text{ such that  } \forall c\in C_{\theta} \ \exists s\in S, sc= \theta \} \]
 and  $\mathcal{Z}_{A}=\{\theta\ |\ \theta  \text{ is a zero element of } A\}$. We claim that the following assignment is a Kurosh-Amitsur radical.
\[ r_{G}:A\mapsto r_{G}(A)=\left\{ \begin{matrix}
\displaystyle{\bigvee_{\theta\in Z_{A}}} \rho_{X_{\theta}}   & Z(A)\neq \emptyset &\\
 \Delta_{A} \ & otherwise &
\end{matrix}\right.\]
Indeed, for
 \[\mathbb{S}_{r_{G}}=\{A\ | \text{  every non-trivial  subact of } A \text{ has a cyclic subact without zero}\} \]
 and
 \[\mathbb{R}_{r_{G}}=\{A\ |\  A \text{ has a zero element  $\theta_{A}$ such that} \ \forall a\in A\ \exists s\in S,\ sa=\theta\},\]
  we have
\begin{enumerate}
\item  $\mathbb{R}_{r_{G}}\cap \mathbb{S}_{r_{G}}$ consists of trivial $S$-acts,
\item   $\mathbb{R}_{r_{G}}$ is homomorphically closed,
\item   $\mathbb{S}_{r_{G}}$ is closed under taking subacts,
\item  every $S$-act $A$ has  $\mathbb{R}_{r_{G}}$-$system$ $\Sigma=\{ X_{\theta}\}_{\theta\in Z_{A}}$ whose Rees factor, $A/\rho_{_{\Sigma}}$, belong to $\mathbb{S}_{r_{G}}$.
\end{enumerate}
Therefore $r_{G}$ is a Kurosh-Amitsur radical, by Lemma \ref{pair kurosh-amitsur}.

 It worth noting that since the radical class $\mathbb{R}_{r_{G}}$ is closed under taking subacts,   $r_{G}$ is hereditary, by Theorem \ref{6k.h.}.

\begin{theorem}\label{r-inj. inj.}
The $r_{G}$-injective $S$-acts are exactly the injective $S$-acts.
\end{theorem}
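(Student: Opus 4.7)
The implication ``injective $\Rightarrow r_{G}$-injective'' is immediate, since every $r_{G}$-monomorphism is a monomorphism. The content lies in the converse, which I would prove by first extracting a zero of $Q$ and then applying a Zorn-type extension argument in the spirit of Theorem~\ref{Bear} to verify the classical Bear--Skornjakov criterion for injectivity in \textbf{S-Act}.

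For the zero, observe that the one-element trivial $S$-act $\{\ast\}$ lies in $\mathbb{R}_{r_{G}}$, so $\emptyset\hookrightarrow\{\ast\}$ is an $r_{G}$-monomorphism; $r_{G}$-injectivity of $Q$ then extends the empty map $\emptyset\to Q$ to $\{\ast\}\to Q$, whose image is a fixed point $\theta\in Q$. To prove $Q$ is injective it suffices, by Bear--Skornjakov, to extend an arbitrary homomorphism $f\colon B\to Q$ along the inclusion of a subact $B$ into a cyclic act $A=Sa$. I would take a Zorn-maximal pair $(A_{1},h)$ with $B\leq A_{1}\leq A$ and $h|_{B}=f$, and derive a contradiction from $A_{1}\lneq A$. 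Pick $a_{0}\in A\setminus A_{1}$ and split by $D=A_{1}\cap Sa_{0}$: if $D=\emptyset$, extend $h$ over $Sa_{0}$ by the constant value $\theta$; if $D\neq\emptyset$ and $Sa_{0}/D\in\mathbb{R}_{r_{G}}$, extend $h|_{D}$ via the $r_{G}$-injectivity of $Q$ applied to the $r_{G}$-monomorphism $D\hookrightarrow Sa_{0}$ and glue with $h$ on $D$; if $D\neq\emptyset$ but $Sa_{0}/D\notin\mathbb{R}_{r_{G}}$, I argue that the failure produces a sub-orbit $S(sa_{0})\subseteq Sa_{0}$ disjoint from $A_{1}$, on which $h$ extends again by the constant $\theta$. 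Each case properly enlarges $A_{1}$, contradicting maximality.

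The hard step is the third case. Any $S$-act in $\mathbb{R}_{r_{G}}$ has a unique zero (any two such witnesses are fixed and must therefore coincide), so the only possible zero witnessing $Sa_{0}/D\in\mathbb{R}_{r_{G}}$ is the Rees class $[D]$. Failure of membership therefore produces some $[sa_{0}]\in Sa_{0}/D$ with $u[sa_{0}]\neq[D]$ for every $u\in S$, i.e.\ $S(sa_{0})\cap D=\emptyset$; since $S(sa_{0})\subseteq Sa_{0}$, this forces $S(sa_{0})\cap A_{1}=\emptyset$ as required. With this in hand, the constant-$\theta$ extension on the disjoint subact $S(sa_{0})$ glues cleanly with $h$, closing the Zorn argument and yielding injectivity of $Q$ by the Bear--Skornjakov criterion.
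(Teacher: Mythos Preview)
Your argument is correct in spirit but takes a genuinely different route from the paper. The paper does \emph{not} run a Zorn argument with your three-case split; instead it uses the essentiality machinery of Section~3 in one stroke. Given $B\hookrightarrow A$, the paper chooses a maximal congruence $\kappa$ with $\kappa\wedge\rho_{B}=\Delta_{A}$; by Theorem~\ref{essential congruence on A/kappa} and Lemma~\ref{rho vee kappa/kappa} the image $\pi(B)$ is large in $A/\kappa$, and largeness (via Lemma~\ref{cap-large}) forces every element of $A/\kappa$ to have a multiple in $\pi(B)$, so $(A/\kappa)/\pi(B)\in\mathbb{R}_{r_{G}}$. A \emph{single} application of $r_{G}$-injectivity then extends $f\cong f\circ(\pi|_{B})^{-1}$ to $A/\kappa$, and composing with $\pi$ gives the desired extension on $A$. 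Your approach trades this one global quotient for a local, hands-on Zorn extension; what you gain is independence from the Section~3 machinery, what you pay is the extra case analysis. Your Case~3 is the genuinely new observation relative to Theorem~\ref{Bear}, and your uniqueness-of-zero argument there is correct and nicely isolates why $r_{G}$ is special.

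One caveat: your zero extraction via $\emptyset\hookrightarrow\{\ast\}$ is not clean in the paper's framework. The closure $c^{r}_{A}(B)=\pi^{-1}\big([B]_{r(A/B)}\big)$ is only defined when $B$ yields an element of $A/B$ whose $r(A/B)$-class one can take; for $B=\emptyset$ there is no such element, so ``$\emptyset$ is $r_{G}$-dense in $\{\ast\}$'' is not literally an instance of the paper's definition. The paper sidesteps this by simply invoking the Skornjakov criterion (which already builds in the zero hypothesis) rather than deriving the zero from $r_{G}$-injectivity. If you want your version to stand on its own, you should either justify why the empty inclusion is admissible as an $r_{G}$-monomorphism in the sense used here, or replace that step by an appeal to Skornjakov as the paper does.
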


\begin{proof}
Let $I$ be an $r_{G}$-injective $S$-act. Then, using Skornjakov criterion, we show that $I$ is injective with respect to the cyclic subacts. So consider the following diagram in which $B$ is a cyclic subact of $A$.
\[
\xymatrix{
& B  \ar@{^{(}->}[r]^(.35){} \ar[d]_{f}& A  \\
& I  \\
 }\]
 Then  there exists a maximal congruence $\kappa$ with $\kappa\wedge \rho_{B}=\Delta$. By Theorem \ref{essential congruence on A/kappa} and Lemma \ref{rho vee kappa/kappa},  the image of $B$ under canonical epimorphism  $\pi:A\to A/\kappa$ is large in   $A/\kappa$. Hence, for every $[a]_{\kappa}\in A/\kappa$, there is $s\in S$ such that $[sa]_{\kappa}\in A/\kappa$, by Lemma \ref{cap-large}. Thus from the definition of $\mathbb{R}_{r_{G}}$  we have $A/\kappa/ \pi(B)\in \mathbb{R}_{r_{G}}$. Now  since, by Lemma \ref{essential subact},  $B$ is isomorph whit $\pi(B)$, there exists an extension $\widehat{f}:A/\kappa\to  I$ of $f$. Therefore the map $\overline{f}: A\to I$ with $\overline{f}(a)=\widehat{f}([a]_{\kappa})$ is an extension of $f$. So $I$ is injective.
\end{proof}

With Corollary \ref{C-Bear} and  Theorem \ref{r-inj. inj.}  in mind we give an stronger version of Bear-Skornjakov criterion for injectivy,  see the following corollary.

\begin{corollary}\label{cor. inj.}
An $S$-act  $ I $ is injective if and only if $I$ has a zero element and each homomorphism $ f:B\longrightarrow I $ in which $B$ is a large subact of a cyclic $S$-act  $A$  can be extended to $A$.
\end{corollary}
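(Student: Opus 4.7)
The plan is to obtain this corollary by chaining together Corollary \ref{C-Bear} applied to the particular radical $r_{G}$ with Theorem \ref{r-inj. inj.}. The key observation is that the proof of Theorem \ref{r-inj. inj.} already shows that, inside a cyclic $S$-act, every large subact is automatically $r_{G}$-dense, which collapses the condition ``$r_{G}$-large inclusion into a cyclic act'' into the simpler condition ``large inclusion into a cyclic act''. Together with the hereditariness (hence zero-hereditariness) of $r_{G}$, this is precisely what is needed to apply Corollary \ref{C-Bear}.

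For the easy direction, if $I$ is injective then $I$ has a zero element (the unique extension of any map $\Theta\to I$ along $\Theta\hookrightarrow\Theta\amalg\Theta$ forces a fixed point), and any homomorphism from any subact extends, so in particular the condition for large subacts of cyclic acts is trivially satisfied.

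For the converse, the plan is: first, observe that $r_{G}$ is a Kurosh-Amitsur radical whose radical class $\mathbb{R}_{r_{G}}$ is closed under taking subacts, so by Theorem \ref{6k.h.} it is hereditary, and in particular zero-hereditary. Hence Corollary \ref{C-Bear} applies to $r_{G}$, reducing $r_{G}$-injectivity (for acts with a zero) to the extension property along $r_{G}$-large monomorphisms into cyclic acts. Second, I would note that any $r_{G}$-large monomorphism into a cyclic act is in particular a large monomorphism into a cyclic act, so the hypothesis of the corollary implies, in an a fortiori manner, the extension property along $r_{G}$-large monomorphisms into cyclic acts. Combined with the standing assumption that $I$ has a zero, Corollary \ref{C-Bear} then gives that $I$ is $r_{G}$-injective. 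Finally, by Theorem \ref{r-inj. inj.}, $r_{G}$-injectivity coincides with ordinary injectivity, and we conclude.

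I do not expect any real obstacle; the whole argument is a routine assembly of the two cited results once one checks that $r_{G}$ is zero-hereditary. The only place where one must be slightly careful is making sure the hypothesis ``$I$ has a zero'' is available for invoking Corollary \ref{C-Bear} (which requires an $S$-act with zero), but this is explicitly part of the statement being proved.
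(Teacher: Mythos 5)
Your proposal is correct and follows essentially the same route the paper intends: the paper offers no proof beyond the remark that the corollary follows from Corollary \ref{C-Bear} (applied to the zero-hereditary radical $r_{G}$) together with Theorem \ref{r-inj. inj.}, and your assembly --- $r_{G}$-large monomorphisms into cyclic acts form a subclass of large monomorphisms into cyclic acts, so the stated hypothesis yields $r_{G}$-injectivity, hence injectivity --- is exactly that assembly with the details filled in. One small repair: your parenthetical argument that an injective $I$ has a zero is circular (a homomorphism $\Theta\to I$ already presupposes a fixed point of $I$); the standard argument instead extends $\mathrm{id}_{I}$ along the embedding $I\hookrightarrow I\amalg\Theta$ and takes the image of the adjoined point.
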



\begin{thebibliography}{50}

\bibitem{Adamek}
 Adamek, J., Herrlich, H.,  Strecker, G. E.: Abstract and Concrete Categories.   John Wiley and Sons, New York (1990)

\bibitem{ban}
Banaschewski, B.: Injectivity and essential extension in equational classes of algebra. Queen's Paper in Pure and Applied Mathematics  \textbf{25}, 131-147 (1970)

\bibitem{Ebrahimi (2010)}
Barzegar, H. Ebrahimi, M. M., Mahmoudi, M.: Essentiality and injectivity. Applied Categorical Structures, \textbf{18}(1), 73-83 (2010)


\bibitem{Bland}
 Bland, P. E.: Divisible and codivisible modules.  Mathematica Scandinavica  \textbf{34}(2), 153-161 (1974)

\bibitem{Burris}
 Burris, S.,   Sankapanvar, H.P.: A Course in Universal Algebra. Graduate Texts Math \textbf{78},   (1981)

\bibitem{Crivei (2004)}
   Crivei,  S.:  Injective modules relative to torsion theories. Editura Fundaţiei pentru Studii Europene, (2004)

\bibitem{Dickson}
 Dickson, S. E.:  A torsion theory for abelian categories. Transactions of the American Mathematical Society,  \textbf{121}(1),  223-235 (1966)

\bibitem{Ebrahimi (2009)}
            Ebrahimi, M. M., Haddadi, M., Mahmoudi, M.:
Injectivity in a category: an overview on well behavior theorems.
             Algebra, Groups, and Geometries \textbf{26}(4), 451-471 (2009)

 \bibitem{Ebrahimi (2014)}
            Ebrahimi, M. M., Haddadi, M., Mahmoudi, M.:
Injectivity in a category: an overview on smallness conditions.
              Categories and General Algebraic Structures with Applications \textbf{2}(1), 83-112 (2014)

 \bibitem{Ebrahimi}
 Haddadi, M.,  Ebrahimi, M. M.: A radical extension of the category of $S$-sets. Bulletin of the Iranian Mathematical Society (2017)



\bibitem{Haddadi}
Haddadi, M., Sheykholislami, S.M.N.: On radical and torsion theory in the category of $S$-acts,  Semigroup Forum Submitted.

\bibitem{Kilp (2000)}
         Klip, M., Knauer,  U., Mikhalev, A.:
     Monoids,  Acts and Categories.
    New York (2000)

      \bibitem{Lambek}
   Lambek, J.: Torsion theories, additive semantics and rings of quotients.
 Springer-Verlag, Berlin (1971)

\bibitem{Maranda}
Maranda, J. M.: Injective structures. Transactions of the American Mathematical Society \textbf{110}(1), 98-135 (1964)

\bibitem{mcmorris}
McMorris, F. R.: Vital Injective S-systems. Mathematische Nachrichten \textbf{47}, 121-125 (1970)

   \bibitem{Nishida}
 Nishida, K.: Divisible modules, codivisible modules, and quasi-divisible
modules. Communications in Algebra  \textbf{5}, 591–610 (1977)


\bibitem{Tholen(2008)}
 Tholen, W.: Injectivity versus exponentiability. \textit{Cahiers de topologie et geometrie differentielle categoriques}, \textbf{49}(3), 228-240 (2008)

\bibitem{Tholen (1995)}
        Tholen W., Dikranjan, D.:
      Categorical Structure of Closure Operators.
   Kluwer Academic Publishers University Press (1995)

 \bibitem{Weinert}
            Weinert,  H. J.: S-sets and semigroups of quotients. Semigroup Forum  \textbf{19}(1), 1-78, (1980).

\bibitem{Wiegandt (2006)}
           Wiegandt, R.: Radical and torsion theory for acts.
            Semigroup Forum \textbf{72}(2), 312-328 (2006)

           \end{thebibliography}
\end{document}